
\documentclass[reqno,a4paper]{amsart} 
\usepackage{amssymb,mathtools,graphicx,ifthen,comment}
\usepackage[all]{xy} \xyoption{import}
\def\useshowkeys{0}
\ifthenelse{\equal{\useshowkeys}{1}}{\usepackage{showkeys}}{}
\usepackage{hyperref,psfrag,rotating,wrapfig}
\mathtoolsset{showonlyrefs}
\def\octave{{\sc Octave\ }}
\def\maxima{{\sc Maxima\ }}
\def\metric#1{{\bf #1}}
\def\y{{\bf y}}
\def\brisk#1{{\mathcal R}\ifthenelse{\equal{#1}{}}{}{(#1)}}
\def\ev{{\rm ev}}
\def\argmin{{\rm argmin\, }}
\def \implies{{\Longrightarrow}}

\def \dim{{\mbox {dim}}\,}

\def\im#1{{\rm Im\, }#1}

\def\ssqerr#1{{\mathsf s}\ifthenelse{\equal{#1}{}}{}{(#1)}}
\def\g{{\mathfrak g}}

\def\h{{\mathfrak h}}
\def\k{{\mathfrak k}}

\def\orb#1#2{#1 \hspace{-2pt}\cdot\hspace{-2pt} #2}
\def\stab#1#2{\ifthenelse{\equal{#1}{}}{{\rm stab}(#2)}{{#1}_{#2}}}

\def\R{{\bf R}}

\def \C{{\mathbf C}}
\def \H{{\mathbf H}}

\newcommand{\gl}[1] {{\rm gl}(#1)}

\def\dist {{\bf dist}}
\def\E{{{\bf E}}}
\def\d{{\rm d}}
\def\dvol{{\d \nu}}
\def\gauss{{\psi}}
\def\gb{{\tilde{g}}}

\def\briskk#1{\ifthenelse{\equal{#1}{}}{\tilde{R}_{\epsilon}}{\tilde{R}_{\epsilon}(#1)}}

\def\scal{{\rm scal}}
\def\cov{{\mathfrak c}}
\def\tube{{\rm T}}
\def\sym#1{{\rm sym}(#1)}
\def\pl{\Pi}
\def\diag#1{{\rm diag}(#1)}
\def\ds{\displaystyle}
\def\ie{{\it i.e.\ }}
\def\cf{{\it c.f.\ }}
\def\spn#1{{\rm span\,}#1}
\def\ddt#1{{\frac{\d\phantom{#1}}{\d #1}}}
\def\didi#1#2{{\frac{\partial\phantom{#1}}{\partial #1}#2}}
\def\cutlocus{{\mathfrak C}}
\def\gcutlocus{{\mathfrak C}_{\Gamma}}

\newcommand{\hzero}[1] {{\Huge{\bf 0}}\ifthenelse{\equal{#1}{}}{}{${}_{#1}$}}
\newcommand{\normal}[2] { {\mathcal N}(#1,#2) }
\newcommand{\p} [0] { {\bf p} }

\newcommand{\GL} [1] { {\rm{GL(}}#1 {\rm )} }

\newcommand{\SO} [1] {{\rm{SO}}\ifthenelse{\equal{#1}{}}{}{(#1)}}
\newcommand{\Orth} [1] {{\rm{O}}\ifthenelse{\equal{#1}{}}{}{(#1)}}
\newcommand{\orth} [1] {{\rm{o}}\ifthenelse{\equal{#1}{}}{}{(#1)}}

\newcommand{\so} [1] {{\rm{so}}\ifthenelse{\equal{#1}{}}{}{(#1)}}
\newcommand{\SU} [1] {{\rm{SU}}\ifthenelse{\equal{#1}{}}{}{(#1)}}
\newcommand{\su} [1] {{\rm{su}}\ifthenelse{\equal{#1}{}}{}{(#1)}}
\newcommand{\U} [1] {{\rm{U}}\ifthenelse{\equal{#1}{}}{}{(#1)}}
\newcommand{\un} [1] {{\rm{u}}\ifthenelse{\equal{#1}{}}{}{(#1)}}
\newcommand{\Sp} [1] {{\rm{Sp}}\ifthenelse{\equal{#1}{}}{}{(#1)}}
\newcommand{\symp} [1] {{\rm{sp}}\ifthenelse{\equal{#1}{}}{}{(#1)}}

\newcommand{\nb}[2][ ] {{\mathrm N}_{#1}(#2) }

\newcommand{\ad} [1] { {\rm ad}_{#1} }
\newcommand{\Ad} [1] { {\rm Ad}_{#1} }

\newcommand\Hom [1] {{\rm Hom(}#1{\rm )}}
\newcommand\stf [2] {V_{#1}(#2)}
\newcommand\gr [2] {G_{#1}(#2)}
\newcommand\Ric [1] {{\rm Ric}_{#1}}
\newcommand\trace [1] {{\rm\, Tr} \left(#1\right)}
\newcommand\vtheta [0] {{\vartheta}}
\newcommand\vTheta [0] {{\Theta}}

\newtheorem{theorem}{Theorem}[section]
\newtheorem{thm}{Theorem}[section]

\newtheorem{remark}[thm]{Remark}
\newtheorem{lemma}[thm]{Lemma}
\newtheorem{proposition}[thm]{Proposition}
\newtheorem{definition}[thm]{Definition}
\newtheorem{corollary}[thm]{Corollary}

\newtheorem{example}[thm]{Example}

\excludecomment{maximacode}
\excludecomment{arxivabstract}

\begin{document}

\title[Bayesian estimation of maps]{A Bayesian approach to the
  Estimation of maps between riemannian manifolds, II: Examples}

\author[Butler, Levit]{Leo T. Butler and Boris Levit}
\address{LB: School of Mathematics, 6214 James Clerk Maxwell Building,
  The University of Edinburgh,  Edinburgh, UK, EH9 3JZ
BL: Department of Mathematics and Statistics
Queen's University,
Kingston, ON, Canada, K7L 3N6}
\email{l.butler@ed.ac.uk, blevit@mast.queensu.ca}
\subjclass[2000]{Primary 62C10; Secondary 62C20, 62F12, 53B20, 53C17, 70G45}
\keywords{ Bayesian problems, Bayes estimators, Minimax estimators,
 riemannian geometry, sub-riemannian geometry, sub-laplacian, harmonic
 maps}

\date{\today}

\thanks{The first author thanks the Carnegie Trust for the
  Universities of Scotland for supporting this research.}

\begin{abstract}
Let $\Theta$ be a smooth compact oriented manifold without boundary,
imbedded in a euclidean space ${\bf E}^s,$ and let $\gamma$ be a
smooth map of $\Theta$ into a Riemannian manifold $\Lambda$. An
unknown state $\theta\in \Theta$ is observed via $X=\theta+\epsilon
\xi$ where $\epsilon>0$ is a small parameter and $\xi$ is a white
Gaussian noise. For a given smooth prior $\lambda$ on $\Theta$ and
smooth estimators $g(X)$ of the map $\gamma$ we have derived a second-order
asymptotic expansion for the related Bayesian risk \cite{BL}. In this
paper, we apply this technique to a variety of examples.

The second part examines the first-order conditions for
equality-constrained regression problems. The geometric tools that are
utilised in \cite{BL} are naturally applicable to these regression
problems.
\end{abstract}
\maketitle

\begin{arxivabstract}
Let M be a smooth compact oriented manifold without boundary,
imbedded in a euclidean space E and let f be a smooth map of M
into a Riemannian manifold N. An unknown state x in M is observed
via X=x+su where s>0 is a small parameter and u is a white
Gaussian noise. For a given smooth prior on M and smooth
estimators g of the map f we have derived a second-order
asymptotic expansion for the related Bayesian risk (see
http://uk.arxiv.org/abs/0705.2540). In this paper, we apply this
technique to a variety of examples.

The second part examines the first-order conditions for
equality-constrained regression problems. The geometric tools
that are utilised in our earlier paper are naturally applicable
to these regression problems.
\end{arxivabstract}

\section{Introduction} 
\label{intro}

In many estimation problems, one has a state which lies on a manifold
but one observes this state plus some error in a euclidean space. It
is desirable to utilise the underlying geometry to construct an
estimator of the state. The present paper uses a Bayesian approach and
the Bayesian estimator derived in \cite{BL} and computes the estimator
in a variety of examples.

In many cases, the geometric framework of \cite{BL} naturally extends
to regression problems. In an estimation problem, the map is known
while the state is observed with noise and one attempts to infer the
`true' state; in a regression problem, the map is unknown and one
observes the input-output states with some noise and attempts to infer
the map. In this paper, we will assume that the regression map belongs
to a given compact finite-dimensional manifold. In such a situation,
one may formally transpose the regression problem in the sense that
one may regard the map as the state that one observes with noise and
the input-output states may be regarded as (evaluation) maps. This
transposition is commonly used in topology and differential
geometry. In the second part of this note, we derive first-order
conditions for regression problems on manifolds. It is shown in
several cases that this duality between estimation and regression is
exact: the two viewpoints lead to the same estimator.

Consider the following situation: $\E$ is a real $s$-dimensional
vector space with inner product $\sigma$ and $\Theta$
(resp. $\Lambda$) is a smooth manifold with riemannian metric ${\bf
g}$ (resp. $\metric{h}$). Assume that the smooth riemannian manifold
$(\Theta,\metric{g})$ is isometrically embedded in a euclidean space
$(\E,\sigma)$ via the inclusion map $\iota$, and $\Theta
\stackrel{\gamma}{\longrightarrow} \Lambda$ is a smooth map. Smooth
means infinitely differentiable. These data are summarized by the
diagram
\begin{equation} \label{eq:cd}
\xymatrix@!R@R=6pt{
\nb[]{\Theta} \ar[rr]^{\text{incl.}} \ar@/_3mm/[rrdd]_{\pi} && (\E,\sigma) \ar@{.>}[ddrr]^{g}\\
\\
&& \ar[uull]_{\text{incl.}} (\Theta,\metric{g}) \ar@{->}[uu]^{\iota} \ar[rr]^{\gamma} && (\Lambda,\metric{h}),
}
\end{equation}
where $\nb[]{\Theta}$ is an open neighbourhood of $\Theta$ in $\E$ and
$\pi$ is the orthogonal projection onto $\Theta$. A basic result of
differential geometry is that if $\Theta$ is compact, then there is an
$r>0$ such that $\pi$ is a smooth map on the set of all vectors
within a distance $r$ of $\Theta$ \cite{MT}.

Suppose that $X \in \E$ is a gaussian random variable with mean
$\theta \in \Theta$ and covariance operator
\footnote{By convention, the covariance operator is the induced inner
  product on the dual vector space $\E^*$. If we regard $\sigma$ as a
  linear isomorphism of $\E \to \E^*$, then the covariance operator is
  the inverse linear isomorphism $\cov=\sigma^{-1} : \E^* \to \E$. It
  is common to think of $\E$ as a space of column vectors, and the
  dual as a space of row vectors, in which case $\cov$ is the
  transpose map $x \mapsto x'$ from row to column
  vectors.}
\ $\epsilon^2 \cov$, \ie $$X \sim \normal{\theta}{\epsilon^2
  \cov},\ \ \ \ \theta \in \Theta.$$ A basic statistical problem is to
determine an estimator ``$\gamma(X)$,'' by which we mean an optimal
extension of $\gamma$ off $\Theta$, in the minimax sense. To make this
precise, let $g : \E \to \Lambda$ be an estimator (map), and let
$\dist$ be the riemannian distance function of
$(\Lambda,\metric{h})$. Define a loss function by
\begin{equation} \label{eq:qr}
R_{\epsilon}(g,\theta) = \int_{x \in \E}\,
\dist(g(x),\gamma(\theta))^2\,
\gauss_{\epsilon}(x-\iota(\theta))\, \d x,
\end{equation}
where $\gauss_{\epsilon}(u) = \exp(-|u|^2/2\epsilon^2 ) / (2\pi
\epsilon^2)^{\frac{s}{2}}$, $|\bullet|$ is the norm on $\E$ induced by
$\sigma$, and $\d x$ is the volume form on $\E$ induced by $\sigma$.%
\footnote{One can introduce a $\sigma$-orthonormal coordinate system
  $x_i$ on $\E$. In this case, $|x|^2 = \sum_i x_i^2$ and $\d x = \d
  x_1 \wedge \cdots \wedge \d x_s$. }  Define the associated minimax
  risk
\begin{equation} \label{eq:mmrisk}
r_{\epsilon}(\Theta) = \inf_{g}\, \sup_{\theta \in \Theta}\
R_{\epsilon}(g,\theta).
\end{equation}

\subsection{Results: Bayesian estimation} \label{ssec:results-estimation}
One may use a Bayesian approach to determine the asymptotically
minimax estimator $g$.  Here one views $\theta$ is viewed as a random
variable with a prior distribution $\lambda(\theta) \d\theta$ where
$\int_{\theta \in \Theta} \lambda(\theta)\, \d\theta = 1$ ($\d\theta =
\dvol_{{\bf g}}$ is the riemannian volume of $(\Theta,\metric{g})$
). The {\em Bayesian risk} of a map $g$ is
\begin{equation} \label{eq:brisk}
R_{\epsilon}(g;\lambda) = \displaystyle  \int_{\theta \in \Theta} \int_{x\in\E}
\dist(g(x),\gamma(\theta))^2\, \lambda(\theta)\,
\gauss_{\epsilon}(x-\iota(\theta))\ \d x\, \d\theta.
\end{equation}
A {\em Bayes estimator} $g : \E \to \Lambda$ is a map which minimizes
the Bayesian risk over all maps. 

Before stating the main result of \cite{BL}, recall that a riemannian
connection permits one to define higher-order derivatives. In
particular, $\nabla \d$ is used to denote the hessian (second
derivative) and $\tau = \trace{\nabla \d}$ denotes the tension field
(laplacian), while $\Ric{}$ denotes the Ricci curvature
\cite{EL:1971,BL}.

In \cite{BL}, the present authors proved

\begin{theorem} \label{thm:best}
Let $\gb_{\epsilon}(x) = \exp_{g_o(x)}\left( \epsilon^2 g_2(x) +
O(\epsilon^4)\right)$ be the Bayesian estimator for the Bayesian risk
functional $R_{\epsilon}$ (Equation \ref{eq:brisk}) with a fixed
Bayesian prior $\lambda>0$, where $g_o,g_2$ are the lowest order terms
in the expansion. Then for all $\epsilon$ sufficiently small
\begin{enumerate}
\item for all $x \in \nb[]{\Theta}$ with $|x-\pi(x)| \leq r$
$$\gb_{\epsilon}(x) = \exp_{\Gamma(x)}\left( \epsilon^2 \left[
  \frac{1}{2} \tau(\gamma) + \d\gamma(\nabla \log\lambda)
  \right]_{\pi(x)} + O(\epsilon^4) \right),$$\\ where $\Gamma
  =\gamma\pi$, and $\exp$ is the exponential map of
  $(\Lambda,\metric{h})$; and
\item
\begin{align*}
R_{\epsilon}(\gb_{\epsilon} ; \lambda) &= \epsilon^2\, \int \d\theta\,
\lambda\, |\d\gamma|^2 +\\ &\ \ \ \ \epsilon^4 \int \d\theta\,
\lambda\, \left\{ \frac{1}{2}| \nabla \d\Gamma |^2 -
|\tau(\gamma)+\d\gamma(\nabla \log \lambda)|^2 - \frac{2}{3} \langle
\d\Gamma, \Ric{\d\Gamma} \rangle \right\} + O(\epsilon^6).
\end{align*}

\end{enumerate}
\end{theorem}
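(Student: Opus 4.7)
The plan is to reduce the global problem to a pointwise variational one on $\Lambda$ by Fubini, use Laplace's method around the foot-point $\pi(x)$, and match coefficients in the resulting asymptotic expansion of the Karcher-mean first-order condition. Writing
$$R_\epsilon(g;\lambda) \;=\; \int_{\E} F_\epsilon(g(x);x)\,\d x, \qquad F_\epsilon(y;x) \;:=\; \int_\Theta \dist(y,\gamma(\theta))^2\, \lambda(\theta)\, \gauss_\epsilon(x-\iota(\theta))\,\d\theta,$$
the Bayes estimator $\gb_\epsilon(x)$ is determined pointwise by minimizing $y\mapsto F_\epsilon(y;x)$ on $\Lambda$. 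Because $\nabla_y \dist(y,z)^2 = -2\exp_y^{-1}(z)$ away from the cut locus of $(\Lambda,\metric{h})$, the first-order condition is the weighted Karcher-mean equation
$$\int_\Theta \exp_{\gb_\epsilon(x)}^{-1}\!\bigl(\gamma(\theta)\bigr)\,\lambda(\theta)\,\gauss_\epsilon(x-\iota(\theta))\,\d\theta \;=\; 0,$$
which is the implicit equation I would expand.

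For $x\in\nb[]{\Theta}$ with $|x-\pi(x)|\le r$ set $\theta_0:=\pi(x)$ and substitute $\theta=\exp_{\theta_0}^\Theta(\epsilon u)$ using $\metric{g}$-normal coordinates. Since $x-\theta_0$ is $\sigma$-normal to $T_{\theta_0}\Theta$ and $\d\iota(\theta_0)$ is an isometric embedding, the linear cross-term vanishes and
$$|x-\iota(\theta)|^2 \;=\; |x-\theta_0|^2 + \epsilon^2|u|^2 + O(\epsilon^3),$$
with the $O(\epsilon^3)$ piece controlled by the second fundamental form of $\iota$ and the $\metric{g}$-volume Jacobian. Thus $\lambda(\theta)\gauss_\epsilon(x-\iota(\theta))\,\d\theta$ becomes, up to a universal prefactor, a standard Gaussian density in $u$ multiplied by a formal power series in $\epsilon$ whose coefficients are polynomials in $u$ built from Taylor coefficients of $\lambda$, $\gamma$, and the embedding.

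To extract $g_o$ and $g_2$, write $\gb_\epsilon(x)=\exp_{\Gamma(x)}(\epsilon^2 v(x)+O(\epsilon^4))$ with $v(x)\in T_{\Gamma(x)}\Lambda$ unknown and apply the Jacobi-field expansion of $\exp_y^{-1}$ about $\Gamma(x)$ on $\Lambda$. Using $\exp_{\Gamma(x)}^{-1}\gamma(\theta) = \epsilon\,\d\gamma(u) + \tfrac{\epsilon^2}{2}\nabla\d\gamma(u,u)+O(\epsilon^3)$, the order-$\epsilon^0$ terms force $g_o(x)=\gamma(\pi(x))=\Gamma(x)$. At order $\epsilon^2$, Gaussian integration turns $\tfrac12\nabla\d\gamma(u,u)$ into $\tfrac12\tau(\gamma)$ and $\d\gamma(u)\,\langle\nabla\log\lambda,u\rangle$ into $\d\gamma(\nabla\log\lambda)$; solving for $v$ yields $v(x)=\tfrac12\tau(\gamma)_{\pi(x)} + \d\gamma(\nabla\log\lambda)_{\pi(x)}$, which is statement~(1).

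For the risk, I would substitute $\gb_\epsilon$ back into $F_\epsilon$ and continue the Laplace expansion to order $\epsilon^4$. The leading $\epsilon^2$ term is the Gaussian average of $|\d\gamma(u)|^2$, i.e.\ $|\d\gamma|^2$. At order $\epsilon^4$ three distinct quartic contributions must be tracked: (i) the Hessian integral $\tfrac12|\nabla\d\Gamma|^2$ from the second-order Taylor expansion of $\Gamma=\gamma\pi$, which absorbs the second fundamental form of $\iota$ through $\pi$; (ii) the subtracted squared drift $-|\tau(\gamma)+\d\gamma(\nabla\log\lambda)|^2$ arising from completing the square in $v$, since $\gb_\epsilon$ sits at the minimum of a quadratic in $v$; and (iii) the curvature term $-\tfrac23\langle\d\Gamma,\Ric{\d\Gamma}\rangle$ coming from the next-order $O(|w|^4)$ correction in the Jacobi expansion of $\dist(\exp_y w,z)^2$ on $(\Lambda,\metric{h})$, after tracing the isotropic Gaussian variance in $u$. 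The main obstacle is the careful simultaneous bookkeeping of these three sources of quartic corrections --- the extrinsic second fundamental form of $\iota:\Theta\hookrightarrow\E$, the intrinsic Ricci curvature of $\Lambda$, and the cross terms coupling $\nabla\lambda$ to $\nabla\d\gamma$ --- and verifying that the cancellations leave exactly the three displayed quantities with the stated coefficients.
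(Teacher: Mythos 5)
You should first note that this paper never proves Theorem \ref{thm:best}: it is imported verbatim from the companion paper \cite{BL} ("In \cite{BL}, the present authors proved\dots"), so there is no in-text proof to compare against, and your sketch has to stand on its own. Its overall architecture is the natural one and almost certainly the intended one: pointwise minimisation of $F_\epsilon(y;x)$, the weighted Karcher-mean equation $\int\exp_{\gb_\epsilon(x)}^{-1}(\gamma(\theta))\,\lambda\,\gauss_\epsilon(x-\iota(\theta))\,\d\theta=0$, Laplace expansion in $\metric{g}$-normal coordinates at $\pi(x)$, and a Jacobi-field expansion of $\exp_{\Gamma(x)}^{-1}$; the identification of $g_o=\Gamma$ and of the two sources of the order-$\epsilon^2$ drift is correct in spirit.

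The concrete gap is in your expansion of the phase. With $\theta=\exp_{\pi(x)}(\epsilon u)$ one has $\iota(\theta)=\pi(x)+\epsilon\,\d\iota(u)+\tfrac{\epsilon^2}{2}\nabla\d\iota(u,u)+O(\epsilon^3)$, and since both $x-\pi(x)$ and the second fundamental form $\nabla\d\iota(u,u)$ are normal to $T_{\pi(x)}\Theta$, the cross term does \emph{not} vanish: $|x-\iota(\theta)|^2=|x-\pi(x)|^2+\epsilon^2\bigl(|u|^2-\langle x-\pi(x),\nabla\d\iota(u,u)\rangle\bigr)+O(\epsilon^3)$, not $|x-\pi(x)|^2+\epsilon^2|u|^2+O(\epsilon^3)$ as you assert. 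On the set $|x-\pi(x)|\le r$ with $r$ fixed --- exactly the domain on which part (1) is claimed --- this is an order-one change of the effective Gaussian: its covariance is $(1-A_{\nu})^{-1}$ with $A_\nu$ the shape operator in the normal direction $\nu=x-\pi(x)$, so your Gaussian averages give $\tfrac12\trace{(1-A_\nu)^{-1}\nabla\d\gamma}$ and $\d\gamma((1-A_\nu)^{-1}\nabla\log\lambda)$ rather than $\tfrac12\tau(\gamma)+\d\gamma(\nabla\log\lambda)$, and the cubic term of the embedding couples to $\epsilon\,\d\gamma(u)$ at the same order. Reconciling this $\nu$-dependence with the stated $\nu$-independent formula (e.g.\ by working at $|\nu|=O(\epsilon)$, where the corrections are odd in $\nu$ and must be shown to wash out of the risk, or by tracking $A_\nu$ explicitly) is precisely the nontrivial step, and it is not optional in part (2) either: integrating over $x$ requires passing to normal-bundle coordinates with Jacobian $\det(1-A_\nu)$, and it is this bookkeeping that turns the Hessian of $\gamma$ into the Hessian of the composite $\Gamma=\gamma\pi$ in the term $\tfrac12|\nabla\d\Gamma|^2$. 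Finally, the coefficient $-\tfrac23$ of $\langle\d\Gamma,\Ric{\d\Gamma}\rangle$ requires the explicit fourth-order curvature expansion of $\dist^2$ combined with Gaussian fourth moments, which you name as a source but do not carry out; as written, the proposal is a correct plan with an incorrect intermediate expansion and the decisive order-$\epsilon^4$ computations left undone.
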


In section~\ref{sec:homog-id}, this notes applies Theorem
\ref{thm:best} to compute the bayesian estimator and risk of the
identity map for a wide class of compact group orbits and a `linear'
prior (see Theorem \ref{thm:linear-priors}).

\subsection{Results: Bayesian regression } \label{ssec:results-regression}
Let $\Theta,\Lambda$ be smooth manifolds. Let
$\theta_1,\ldots,\theta_k$ be a collection of design points on a
manifold $\Theta$ and let $y_1,\ldots,y_k$ be a random sample of
points on $\Lambda$. Assume that the conditional probability density
of $y_l$ given $\theta_l$ is $f(y_l|\gamma(\theta_l))$, where $\gamma
: \Theta \to \Lambda$ is an unknown map and $\theta$ is a point on
$\Theta$.  One is interested in estimating the unknown map $\gamma$ by
minimising a discrepancy function
\begin{align}
\ssqerr{\gamma} &= \frac{1}{k} \ds\sum_{l=1}^k
\ell(y_l,\gamma(\theta_l))  \label{al:reg-sq-err-template}
\end{align}
for a given loss function $\ell : \Lambda \times \Lambda \to
[0,\infty)$ (see section \ref{ssec:reg-bayes}). If one assumes that
  the space of admissible maps $\gamma$ is parameterized by a
  finite-dimensional manifold $\Gamma$, a solution to this regression
  problem is
\begin{equation} \label{eq:reg-soln-template}
\hat{\gamma} = \argmin \left\{ \ssqerr{\gamma} \ :\ \gamma\in \Gamma\right\}.
\end{equation}

One may also consider the regression problem from a bayesian
perspective. In this case, one assumes there is a smooth volume form
$\d \gamma$ on $\Gamma$ and a prior distribution $\lambda(\gamma)\, \d
\gamma$. The bayesian regression problem is to derive the regressor by
minimising the risk functional
\begin{align} \label{al:bay-risk-0}
\brisk{\hat\gamma} &= \int_{\y \in \Lambda^k} \int_{\gamma\in \Gamma}
\ell(\hat \gamma, \gamma)\, f(\y|\gamma)\, \lambda(\gamma)\, \d\y\, \d \gamma.
\end{align}
over regressors $\hat\gamma : \Lambda^k \to \Gamma$.

In section \ref{sec:reg}, these two regression problems are
examined. First-order conditions that determine the regressors are
proven. In addition, we examine the special cases where
$\Theta,\Lambda \subset \E$, $\Gamma$ is a manifold of linear maps and
\begin{enumerate}
\item $\ell$ is determined by the ambient euclidean structure;
\item $\ell$ is determined by the riemannian distance function on
  $\Lambda$ induced by the euclidean structure.
\end{enumerate}
The special case where $\Theta$ and $\Lambda$ are both the
$2$-dimensional unit sphere $S^2 \subset \E^3$ and $\Gamma$ is the
group of orientation-preserving linear isometries of $\E^3$, $\SO{3}$
is examined in detail in each case.

The results of section \ref{sec:reg} are formulated in Propositions
\ref{pr:reg-foc}, \ref{pr:reg-id-foc} and \ref{pr:bay-foc}.

\section{Estimation of states on group orbits} \label{sec:homog-id}
Let $(\E^s,\sigma)$ be an $s$-dimensional euclidean space: that is,
$\E^s$ is an $s$-dimensional real vector space and $\sigma$ is a
symmetric, positive-definite quadratic form on $\E^s$. The group of
linear isometries of $\E^s$ is denoted by $\Orth{\E^s,\sigma}$ and
called the {\em orthogonal group} of $(\E^s,\sigma)$. This group is
denoted by $\Orth{\E}$ when the euclidean structure $\sigma$ and
dimension $s$ are understood. By choice of an orthonormal basis,
$(\E^s,\sigma)$ is linearly isometric to $\R^s$ with its standard
orthonormal basis; the orthogonal group of this latter model euclidean
space is denoted by $\Orth{}_s$, while $\SO{}_s$ is the subgroup of
$\Orth{}_s$ with unit determinant. 

The set of linear transformations $\E^s \to \E^r$ is denoted by
$\Hom{\E^s,\E^r}$. It is naturally a euclidean space with the trace
inner product $(x,y) \mapsto \trace{x'y}$. There is an orthogonal
decomposition of $\Hom{\E^s,\E^s}$ into the sets of skew-symmetric
transformations (denoted $\so{}_s$) and symmetric transformations
(denoted $\sym{\E^s}$).

Let $G \subset \Orth{\E^s,\sigma}$ be a compact group of linear
isometries of $(\E^s,\sigma)$. A tangent vector $\xi \in T_1G$ in the
tangent space to the identity of $G$ can be identified with a
matrix. The matrix exponential map restricts naturally to give a map
$\exp : T_1G \to G$. For each $g \in G$, the curve $t \to
\exp(t\xi)\cdot g$ is a curve in $G$ passing through $g$ at $t=0$. Its
derivative $\xi \cdot g$ is therefore a tangent vector in
$T_gG$. Thus, each tangent space is canonically isomorphic to $T_1G$
via right translations.
\footnote{One can equally use left translations.}
\ One typically writes $T_1G = \g$, and calls $\g$ the Lie algebra of
the Lie group $G$. As a set of matrices, $\g$ is equipped with the Lie
bracket denoted by $[\xi,\eta] = \xi \cdot \eta - \eta \cdot \xi$. One
can easily verify that $\xi,\eta \in \g$ implies that $[\xi,\eta] \in
\g$. In addition, for each $g \in G$, $\xi \in \g$, the element
$g\cdot\xi\cdot g^{-1} \in \g$. It is conventional to write $\Ad{g}
\xi = g\cdot\xi\cdot g^{-1}$ and observe that $\Ad{} : G \to \GL{\g}$
is a representation, called the adjoint representation. One knows that
$\left.\frac{\d\ }{\d t}\right|_{t=0}\, \Ad{\exp(t\xi)}\eta =
[\xi,\eta]$, so the derivative $\d_1 \Ad{} =: \ad{} : \g \to \gl{\g}$
is a linear representation of $\g$.

The trace form $(\xi,\eta) \mapsto \trace{\xi'\eta}$ is positive
definite on $\g$. Moreover, the trace form is invariant under the
adjoint representation of $G$, \ie $G$ acts as a group of isometries
of this euclidean structure on $\g$. For a subspace $V \subset \g$,
let $V^{\perp}$ denote its orthogonal complement with respect to the
trace form.

For each $\vtheta\in\E$, let the set $\orb{G}{\vtheta} = \left\{ \phi
\in \E\ :\ \exists g \in G \text{ and } \phi=g\cdot \vtheta \right\}$
be the $G$-orbit of $\vtheta$ and let $\stab{G}{\vtheta} = \left\{ g\in
G\ :\ g\cdot \vtheta=\vtheta \right\}$ be the $G$-stabilizer of
$\vtheta$. It is a well-known theorem that $\orb{G}{\vtheta}$ is a
smooth submanifold of $\E$. The tangent space to $\orb{G}{\vtheta}$ at
$\phi$ can be identified with $\stab{\g}{\phi}^{\perp}$, where
$\stab{\g}{\phi} \subset \g$ is the Lie algebra of
$\stab{G}{\phi}$. Indeed, since $G$ acts transitively, the map $\g
\to T_{\phi}(\orb{G}{\vtheta}) : \xi \mapsto \xi \cdot \phi$ is onto
and its kernel is $\stab{\g}{\phi}$. If $\phi = g \cdot \vtheta$, then
one sees that $\stab{G}{\phi} = g\cdot\stab{G}{\vtheta}\cdot g^{-1}$
and similarly for the Lie algebras.

The normal bundle $\nb[]{\orb{G}{\vtheta}}$ of $\orb{G}{\vtheta}$ is
isomorphic to the vector bundle
\begin{equation} \label{eq:nb}
\nb[]{\orb{G}{\vtheta}} = G \times_{\stab{G}{\vtheta}}
\left(T_{\vtheta}\orb{G}{\vtheta}\right)^{\perp} = G \times_{\stab{G}{\vtheta}}\nb[\vtheta]{\orb{G}{\vtheta}}.
\end{equation}
Here, $G \times \nb[\vtheta]{\orb{G}{\vtheta}}$ is the cartesian product
of the group $G$ with the orthogonal complement
$\nb[\vtheta]{\orb{G}{\vtheta}}$ to the tangent space to $G$'s orbit
through $\vtheta$. The stabiliser $\stab{G}{\vtheta}$ acts linearly on
$\nb[\vtheta]{\orb{G}{\vtheta}}$ and by right translation on $G$. The
set $G \times_{\stab{G}{\vtheta}}\nb[\vtheta]{\orb{G}{\vtheta}}$ is the
quotient space whose points are the sets ($\stab{G}{\vtheta}$-orbits)
$[g,v] = \left\{ (g\cdot h, h \cdot v)\ :\ h \in \stab{G}{\vtheta}
\right\}$ for each $(g,v) \in G \times \nb[\vtheta]{\orb{G}{\vtheta}}.$

It is also a well-known fact that there is an open neighbourhood of
$\orb{G}{\vtheta}$ which is $G$-equivariantly diffeomorphic to an open
neighbourhood $\tube$ of $\orb{G}{\vtheta}$ in
$\nb[]{\orb{G}{\vtheta}}$. See \cite{MT} for generalities and
\cite{Helgason,HJ} for specifics on linear Lie groups.

\begin{center}
\begin{figure}[htb]
{
\psfrag{x}{$\vtheta$}
\psfrag{y}{$\phi$}
\psfrag{nx}{$\nb[\vtheta]{\vTheta}$}
\psfrag{ny}{$\nb[\phi]{\vTheta}$}
\includegraphics[width=5cm, height=4cm]{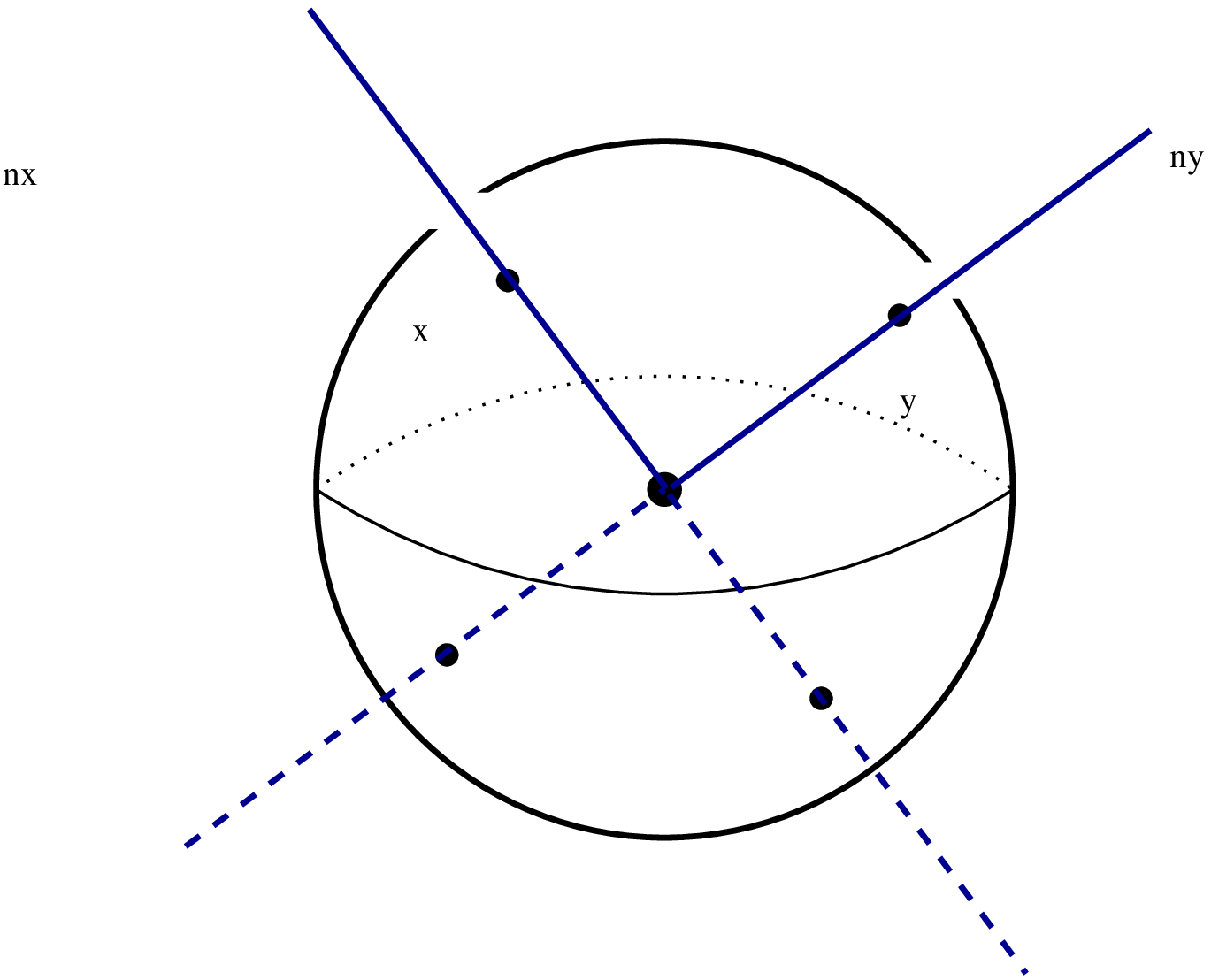}
}
\caption{The tubular neighbourhood $\tube$ and the
  normal bundle $\nb[]{\vTheta}$.} \label{fig:tubular-nhd}
\end{figure}
\end{center}

To simplify notation, $\vTheta$ is used to denote $\orb{G}{\vtheta}$ in
some cases.

\subsection{The projection map onto $\orb{G}{\vtheta}$
} \label{ssec:pj} 
Let us now derive the projection map $\pi : \tube \to
\orb{G}{\vtheta}$. Given $x\in\E$, assume that there is a $g\in G$ such
that 
\begin{align} \label{eq:gx}
g^{-1}x &\in \vtheta + \nb[\vtheta]{\orb{G}{\vtheta}}  &\text{which
  implies } x \in g \vtheta +
\nb[g \vtheta]{\orb{G}{\vtheta}},
\end{align}
by $G$-equivariance. In this case, we can define
\begin{equation} \label{eq:pix}
\pi(x) = g \vtheta.
\end{equation}
\begin{lemma} \label{lem:wd}
There is an open neighbourhood $\tube$ of $\vTheta=\orb{G}{\vtheta}$
such that the map $\pi$ defined in \eqref{eq:pix} is independent of
$\vtheta$ in $\vTheta$. In addition, $\pi$ is a real-analytic
submersion whose fibres are open neighbourhoods of $0 \in
\nb[\phi]{\vTheta}$ for each $\phi \in \vTheta$.
\end{lemma}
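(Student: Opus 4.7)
The plan is to exhibit $\pi$ as the composition $p \circ \nu^{-1}$, where $p : \nb[]{\vTheta} \to \vTheta$ is the bundle projection $[g,v] \mapsto g\vtheta$ and $\nu : \nb[]{\vTheta} \to \E$ is the linear normal exponential map
\begin{equation}
\nu([g,v]) \;=\; g(\vtheta + v) \;=\; g\vtheta + gv.
\end{equation}
The map $\nu$ descends from $G \times \nb[\vtheta]{\vTheta}$ to the quotient $G \times_{\stab{G}{\vtheta}} \nb[\vtheta]{\vTheta}$ because, for $h \in \stab{G}{\vtheta}$, one has $(gh)(\vtheta + h^{-1}v) = g\vtheta + gv$ using $h\vtheta = \vtheta$ and linearity of the $G$-action on $\E$. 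After choosing a matrix realisation of $G$, both $\nu$ and $p$ are real-analytic.

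The first substantive step is to invoke the slice theorem for compact group actions \cite{MT} and conclude that $\nu$ restricts to a $G$-equivariant real-analytic diffeomorphism from an open neighbourhood $U$ of the zero section of $\nb[]{\vTheta}$ onto an open neighbourhood $\tube$ of $\vTheta$ in $\E$. The critical infinitesimal input is that at $(\vtheta,0)$ the differential $\d\nu$ maps a horizontal vector $\xi \cdot \vtheta \in T_\vtheta \vTheta$ (for $\xi \in \stab{\g}{\vtheta}^{\perp}$) and a vertical vector $w \in \nb[\vtheta]{\vTheta}$ to $\xi \cdot \vtheta + w \in \E$; this is an isomorphism by the definition of $\nb[\vtheta]{\vTheta}$ as the orthogonal complement of $T_\vtheta \vTheta$, and compactness of $\vTheta$ together with the real-analytic inverse function theorem produces the desired uniform tubular neighbourhood. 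Setting $\pi := p \circ \nu^{-1}$, the definitions \eqref{eq:gx} and \eqref{eq:pix} are recovered: for $x \in \tube$, writing $\nu^{-1}(x) = [g,v]$ is exactly the assertion that $g^{-1}x = \vtheta + v$ with $v \in \nb[\vtheta]{\vTheta}$, and $\pi(x) = p([g,v]) = g\vtheta$; the $\stab{G}{\vtheta}$-ambiguity in $g$ leaves $g\vtheta$ invariant.

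The stated properties then follow in a single stroke. Real-analyticity and submersivity of $\pi$ are immediate from $\nu^{-1}$ being a real-analytic diffeomorphism and $p$ being a real-analytic submersion. For the fibre description, $\pi^{-1}(\phi) = \nu(p^{-1}(\phi) \cap U)$; writing $\phi = g\vtheta$, the restriction of $\nu$ to $p^{-1}(\phi)$ takes the form $w \mapsto \phi + w$ for $w \in \nb[\phi]{\vTheta}$, so $\pi^{-1}(\phi)$ is canonically identified with an open neighbourhood of $0 \in \nb[\phi]{\vTheta}$. Independence of the base point $\vtheta \in \vTheta$ holds because $p \circ \nu^{-1}$ depends only on $\vTheta$, its normal bundle, and the affine structure of $\E$; a different base point $\vtheta' = g_0\vtheta$ yields canonically isomorphic presentations under the reparametrisation $[g,v] \mapsto [g g_0^{-1}, g_0 v]$, and both $\nu$ and $p$ are preserved. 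I expect the main obstacle to be the bookkeeping for the associated-bundle quotient --- checking that $[g,v] \mapsto g(\vtheta + v)$ really descends to a real-analytic map on $\nb[]{\vTheta}$ and that the inverse function theorem applies on the quotient --- which is standard but cleanest when carried out in a local real-analytic slice for the $\stab{G}{\vtheta}$-action on $G$.
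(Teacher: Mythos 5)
Your proof is correct and follows essentially the same route as the paper: the paper's own argument simply notes that \eqref{eq:gx}--\eqref{eq:pix} define a $G$-equivariant map on the normal bundle and then invokes the equivariant tubular neighbourhood theorem of \cite{MT}, which is exactly your factorisation $\pi = p\circ\nu^{-1}$ through the normal exponential map. You merely spell out the details (descent to the associated bundle, the differential computation, real-analytic inverse function theorem, base-point independence) that the paper leaves implicit.
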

\begin{proof}
It suffices to observe that if $x \in \nb[\vtheta]{\orb{G}{\vtheta}}$,
then one can take $g=1 \bmod G_\vtheta$ in \eqref{eq:gx} and
$\pi(x)=\vtheta$, and that (\ref{eq:gx}--\ref{eq:pix}) defines $\pi$ as
a $G$-equivariant map from $\nb[]{\orb{G}{\vtheta}}$ to
$\orb{G}{\vtheta}$. The lemma then follows from the tubular
neighbourhood theorem \cite{MT}. 
\end{proof}

\begin{remark} \label{re:cp}
{\rm 

1/ In general, the affine planes $\vtheta + \nb[\vtheta]{\vTheta}$ and
$\phi + \nb[\phi]{\vTheta}$ intersect each other, as in figure
\ref{fig:tubular-nhd}. At such a point of intersection $\pi$ is not
single-valued; hence these points obstruct the extension of $\pi$ from
a tubular neighbourhood of $\vTheta$ to a globally-defined map on
$\E$. 2/ Lemma \ref{lem:wd} is a consequence of the tubular
neighbourhood theorem for group orbits. Moreover, many
linear-algebraic decompositions are, in fact, an application of this
tubular neighbourhood result.

}
\end{remark}

\subsection{Linear priors on $\orb{G}{\vtheta}$} \label{ssec:priors} 
{
\def\hv{\hat{v}}

As noted in the introduction, the euclidean structure $\sigma$ induces
a linear isomorphism $\E \to \E^* : v \mapsto \hv(\bullet) =
\sigma(v,\bullet)$.
\footnote{One often thinks of this map as $v \mapsto v'$, $v$ maps to
  $v$-transpose. This notation is also used below.}
\ For each $v \in \E$, let $\hv \in \E^*$ be the dual vector induced
by the euclidean structure $\sigma$ and let $f_v = \hv |
\orb{G}{\vtheta}$ be the restriction of $\hv$ to the group orbit. In
terms of the inclusion map $\iota : \orb{G}{\vtheta} \to \E$, one can
write $f_v = \hv \circ \iota$.  Let $\underline{f}_v$ be the minimum
value of $f_v$ and $\bar{f}_v = \int f_v(\phi)\ \d\phi$ be the mean
value of $f_v$ with respect to $\d \phi$, the unique $G$-invariant
probability measure on $\orb{G}{\vtheta}$. (One can define $\bar{\phi}
= \int \iota(\phi)\ \d \phi$ to be the mean element of
$\orb{G}{\vtheta}$, in which case $\bar{f}_v = \left\langle v ,
\bar{\phi} \right\rangle$.  Since $\bar{\phi}$ is a fixed point of
$G$, $\bar{\phi} = 0$ unless $\E$ contains a trivial representation of
$G$.)  Define a bayesian prior density $\lambda=\lambda_v$ by
\begin{equation} \label{eq:lambda_v}
\lambda_v = \alpha f_v + \beta
\end{equation} where the real numbers $\alpha>0$ and
$\beta$ satisfy $\alpha \bar{f}_v + \beta =1$ and $\alpha
\underline{f}_v + \beta = c > 0$.

The chain rule shows that $\d f_v = \d\hv \circ \d \iota$, whence
\begin{align}
\nabla f_v(\phi) &= \d_{\phi}\pi(v) \notag\\
\nabla \log \lambda_v(\phi) &= \frac{\alpha}{\lambda_v} \times
\d_{\phi}\pi(v) && \text{for all }\phi \in
\orb{G}{\vtheta}. \label{al:grad-lambda}
\end{align}
The gradient vanishes at $\phi$ iff $v \in
\nb[\phi]{\orb{G}{\vtheta}}$. The chain rule for second derivatives
shows that $\nabla \d f_v = \nabla \d\hv(\d \iota, \d \iota) + \d\hv
\circ \nabla \d \iota = \d\hv \circ \nabla \d \iota$ since $\nabla \d
\hv = 0$ because $\hv$ is linear. The tensor field $\nabla \d \iota$
is the second fundamental form of $\orb{G}{\vtheta}$ in $\E$ and it is
a measure of the curvature of $\orb{G}{\vtheta}$. Application of the
definition of $\nabla \d \iota$~\cite{EL:1971} shows that $\nabla \d
\iota(\xi_\phi,\eta_\phi) = (1-\d_\phi \pi) \xi \cdot \eta \cdot \phi$
for all $\xi,\eta \in \g$. Thus $\left. \nabla \d f_\vtheta
(\xi,\xi)\right|_\vtheta = -|\xi \cdot \vtheta|^2$ for all $\xi \in
\g_\vtheta^{\perp} \simeq T_\vtheta \orb{G}{\vtheta}$, so the maximum is
a non-degenerate critical point of $f_\vtheta$. A well-known theorem in
Morse theory states that for almost all $v\in\E$, $f_v$ is a Morse
function on $\orb{G}{\vtheta}$ \cite{MT}.

For the purposes of imposing a strong prior, a natural choice is
$v=\vtheta$. The Cauchy-Schwarz inequality plus the fact that $G$ acts
by isometries implies that $f_\vtheta$ (hence $\lambda_\vtheta$) attains
its unique maximum value at $\vtheta$.

Let $\d g$ be the Haar probability measure on $G$. The Haar measure
factors as $\d h \cdot \d \vtheta$, where $\d h$ is the Haar
probability measure on $H$, the stabiliser of $\vtheta$, and $\d
\vtheta$ is the unique $G$-invariant probability measure on
$\vTheta = \orb{G}{\vtheta}$. Define
\begin{equation} \label{eq:vtilde}
\tilde{v}_{\vtheta} = \int_{g \in G} \d g\, \log \lambda_{g\cdot v}(\vtheta) \, g\cdot v
\end{equation}
to be the mean of $v$ over $\orb{G}{v}$ taken with respect to a
peculiar measure.

\begin{thm} \label{thm:linear-priors}
Let $\vTheta = \orb{G}{\vtheta}$ and $\gamma$ be the identity map of
$\vTheta$. Let the bayesian prior density $\lambda=\lambda_v$ be
defined by \eqref{eq:lambda_v}. Let $x \in \tube$, $\hat
\vtheta=\pi(x)$ and $\xi \in \stab{\g}{\hat \vtheta}^{\perp}$ be the
unique vector such that $\xi \cdot \hat \vtheta = \d_{\hat
  \vtheta}\pi(v)$. The bayesian estimator $\gb_{\epsilon}$ and its risk
equal
\begin{align}
\gb_{\epsilon}(x) &= \exp(s \xi + O(\epsilon^4)) \cdot \hat\vtheta &
s=\frac{\alpha \epsilon^2}{\lambda_{v}(\hat\vtheta)} \label{al:linear-priors-g}\\ 
R_{\epsilon}(\gb_{\epsilon};\lambda_v)
&= \epsilon^2 \dim \vTheta + \epsilon^4 \left(
\frac{1}{3}\,\scal_{\vTheta,\lambda} + \left\langle \tilde{v}_{\hat\vtheta}
, \tau(\iota)_{\hat\vtheta} \right\rangle\right) + O(\epsilon^6)
\label{al:linear-priors-r}
\end{align}
where $\exp$ is the exponential map of the Lie group $G$,
$\scal_{\vTheta,\lambda}$ is the average of the scalar curvature of
$\vTheta$ with respect to $\d \vtheta \lambda$ and $\tau(\iota)$ is the
normal vector field of $\vTheta$.
\end{thm}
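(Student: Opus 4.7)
The strategy is to specialize Theorem \ref{thm:best} to $\gamma=\mathrm{id}_{\vTheta}$ and $\lambda=\lambda_v$, then reduce each term using the homogeneous structure of $\vTheta$ and the formulas in Section \ref{ssec:priors}.

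\emph{Part (1) — the estimator.} For $\gamma=\mathrm{id}$ one has $\tau(\gamma)=0$ and $\d\gamma(\nabla\log\lambda)=\nabla\log\lambda$, so Theorem \ref{thm:best}(1) collapses to $\gb_\epsilon(x)=\exp_{\hat\vtheta}\!\bigl(\epsilon^{2}\nabla\log\lambda_v(\hat\vtheta)+O(\epsilon^{4})\bigr)$, with $\exp$ the Riemannian exponential of $\vTheta$. By \eqref{al:grad-lambda}, $\nabla\log\lambda_v(\hat\vtheta)=\tfrac{\alpha}{\lambda_v(\hat\vtheta)}\,\d_{\hat\vtheta}\pi(v)$. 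Since $T_{\hat\vtheta}\vTheta=\stab{\g}{\hat\vtheta}^{\perp}\!\cdot\hat\vtheta$, there is a unique $\xi\in\stab{\g}{\hat\vtheta}^{\perp}$ with $\xi\cdot\hat\vtheta=\d_{\hat\vtheta}\pi(v)$, which gives $s=\alpha\epsilon^{2}/\lambda_v(\hat\vtheta)$. Finally, the group-orbit curve $s\mapsto\exp(s\xi)\cdot\hat\vtheta$ and the Riemannian geodesic $s\mapsto\exp_{\hat\vtheta}(s\,\xi\cdot\hat\vtheta)$ have the same velocity at $s=0$, hence agree to first order in $s$; since $s=O(\epsilon^{2})$, the discrepancy is swept into the $O(\epsilon^{4})$ remainder, yielding \eqref{al:linear-priors-g}.

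\emph{Part (2) — the risk.} The leading term of Theorem \ref{thm:best}(2) becomes $\epsilon^{2}\int\lambda\,|\d\mathrm{id}|^{2}\,\d\vtheta=\epsilon^{2}\dim\vTheta$ since $|\d\mathrm{id}|^{2}=\dim\vTheta$ and $\int\lambda\,\d\vtheta=1$. Inside the $\epsilon^{4}$ bracket, with $\Gamma=\pi$, three pieces must be evaluated on $\vTheta$:
\begin{enumerate}
\item \textbf{Ricci piece.} At $\vtheta\in\vTheta$, $\d_{\vtheta}\pi$ is the identity on $T_{\vtheta}\vTheta$ and zero on $\nb[\vtheta]{\vTheta}$, so $\langle\d\pi,\Ric{\d\pi}\rangle_{\vtheta}=\scal_{\vTheta}(\vtheta)$. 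Integrating against $\lambda\,\d\vtheta$ returns $\scal_{\vTheta,\lambda}$.
\item \textbf{Hessian piece.} Using the identity $\nabla\d\iota(\xi\cdot\vtheta,\eta\cdot\vtheta)=(1-\d_{\vtheta}\pi)\xi\cdot\eta\cdot\vtheta$ stated in Section \ref{ssec:priors} together with $\nabla\d\pi|_{T\vTheta\times T\vTheta}+\nabla\d\iota=0$ (a consequence of $\pi\circ\iota=\mathrm{id}$), express $\tfrac{1}{2}|\nabla\d\pi|^{2}$ in terms of the second fundamental form. The contracted Gauss equation for the embedding $\iota:\vTheta\hookrightarrow\E$ into a flat ambient space rewrites this as $\scal_{\vTheta}$ plus a pairing with the mean curvature $\tau(\iota)$.
\item \textbf{Gradient piece.} By \eqref{al:grad-lambda}, $|\nabla\log\lambda_v|^{2}=\tfrac{\alpha^{2}}{\lambda_v^{2}}|\d\pi(v)|^{2}$.
\end{enumerate}
Combining, the scalar-curvature coefficients add to $1-\tfrac{2}{3}=\tfrac{1}{3}$, producing the $\tfrac{1}{3}\scal_{\vTheta,\lambda}$ term.

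\emph{Emergence of $\tilde v$.} The remaining contribution is a pairing of $\nabla\log\lambda$ with $\tau(\iota)$, integrated against $\lambda\,\d\vtheta$. Factor the Haar measure as $\d g=\d h\cdot\d\vtheta$ (Haar on the stabiliser $H$ times invariant measure on $\vTheta=G/H$), and change variables inside the integral of $\log\lambda_{gv}(\vtheta)\,(gv)$ to trade the $\vTheta$-integral for the $G$-integral in \eqref{eq:vtilde}. This identifies the cross-term with $\langle\tilde v_{\hat\vtheta},\tau(\iota)_{\hat\vtheta}\rangle$. The apparent dependence on $\hat\vtheta$ is illusory: a change of variable $g\mapsto hg$ under the Haar measure shows $\tilde v_{h\vtheta}=h\cdot\tilde v_{\vtheta}$ and $\tau(\iota)_{h\vtheta}=h\cdot\tau(\iota)_{\vtheta}$, so the pairing is $G$-invariant along $\vTheta$ and may be evaluated at any point.

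\emph{Main obstacle.} The substantive calculation is the Hessian piece: tracking $\nabla\d\pi$ off $\vTheta$ using the tubular-neighbourhood description of $\pi$, and applying Gauss--Codazzi for $\iota$ to split $\tfrac{1}{2}|\nabla\d\pi|^{2}$ into a curvature part and the specific $\tau(\iota)$-pairing that, after averaging, assembles into $\langle\tilde v_{\hat\vtheta},\tau(\iota)_{\hat\vtheta}\rangle$. Verifying the $G$-equivariance of $\tilde v$ at the outset is the cleanest way to justify the absence of an integral over $\vTheta$ in the final formula \eqref{al:linear-priors-r}.
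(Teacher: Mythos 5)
Part (1) of your proposal is sound and is exactly the route the paper indicates: specialise Theorem \ref{thm:best} to $\gamma=\mathrm{id}$ (so $\tau(\gamma)=0$), use \eqref{al:grad-lambda} and the isomorphism $T_{\hat\vtheta}\vTheta\simeq\stab{\g}{\hat\vtheta}^{\perp}$, and absorb the $O(s^{2})=O(\epsilon^{4})$ discrepancy between the riemannian geodesic and the orbit curve $s\mapsto\exp(s\xi)\cdot\hat\vtheta$ into the remainder. Your equivariance argument that $\tilde v_{h\vtheta}=h\cdot\tilde v_{\vtheta}$, so that $\langle\tilde v_{\vtheta},\tau(\iota)_{\vtheta}\rangle$ is constant along $\vTheta$, is also correct and matches the paper's remark. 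The problem is Part (2), where the two substantive claims are respectively false and unproved.

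First, the identity $\nabla\d\pi|_{T\vTheta\times T\vTheta}+\nabla\d\iota=0$ cannot hold: differentiating $\pi\circ\iota=\mathrm{id}$ gives $\nabla\d\pi(\d\iota\,\cdot,\d\iota\,\cdot)=-\d\pi\circ\nabla\d\iota=0$, because $\nabla\d\iota$ is normal-valued and $\d_{\vtheta}\pi$ annihilates $\nb[\vtheta]{\vTheta}$; the two tensors take values in complementary bundles and cannot cancel. The nonvanishing part of $\nabla\d\pi$ lives in the mixed tangent--normal directions (for $S^{n-1}$, $\pi(x)=x/|x|$ gives $\nabla\d\pi(u,\vtheta)=-u$ for tangent $u$, hence $\tfrac12|\nabla\d\pi|^{2}=n-1$, which is not $\scal_{\vTheta}=(n-1)(n-2)$ in general), so the assertion that the Hessian piece contributes exactly $\scal_{\vTheta}$ and that the coefficients ``add to $1-\tfrac23=\tfrac13$'' is precisely the nontrivial computation; you label it the main obstacle and leave it unexecuted, so the $\tfrac13\scal_{\vTheta,\lambda}$ coefficient is never actually derived. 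Second, the source of $\langle\tilde v_{\hat\vtheta},\tau(\iota)_{\hat\vtheta}\rangle$ is misattributed: $\tfrac12|\nabla\d\pi|^{2}$ is prior-independent and so cannot produce a $v$-dependent cross term, and a pointwise pairing of $\nabla\log\lambda$ with $\tau(\iota)$ vanishes since the former is tangential and the latter normal. The term must come from the gradient piece $-\int\lambda_v|\nabla\log\lambda_v|^{2}\,\d\vtheta$, which you list but never use: one needs the identity $\Delta f_v=\langle v,\tau(\iota)\rangle$ (from $\nabla\d f_v=\d\hat v\circ\nabla\d\iota$ with $\hat v=\sigma(v,\cdot)$, as in Section \ref{ssec:priors}), the integration by parts $\int\lambda_v|\nabla\log\lambda_v|^{2}\,\d\vtheta=-\alpha\int\log\lambda_v\,\Delta f_v\,\d\vtheta$ on the closed manifold $\vTheta$, and only then the Haar factorisation $\d g=\d h\cdot\d\vtheta$ together with equivariance to convert $\int_{\vTheta}\log\lambda_v\,\langle v,\tau(\iota)\rangle\,\d\vtheta$ into the $G$-integral defining $\tilde v_{\vtheta}$ in \eqref{eq:vtilde}. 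This integration-by-parts step is exactly what the paper flags as necessary for \eqref{al:linear-priors-r}, and it is absent from your argument; without it, and without a correct evaluation of $|\nabla\d\pi|^{2}$ on a group orbit, the risk formula is asserted rather than proved.
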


The proof of \eqref{al:linear-priors-g} applies Theorem \ref{thm:best}
and the fact that $G$ acts as a transitive isometry group of
$\vTheta$. It should be noted that, although $\tilde{v}_\vtheta$ is not
independent of $\vtheta$, the inner product $\left\langle
\tilde{v}_\vtheta , \tau(\iota)_{\vtheta} \right\rangle$ is
independent. In addition, the integration-by-parts formula is needed
to demonstrate \eqref{al:linear-priors-r}.

In the particular case of $v=0$, $\lambda$ is a flat prior density,
the $G$-invariant measure $\d \theta$ is the flat distribution and the
estimator is $\gb_{\epsilon}(x) = \hat \vtheta + O(\epsilon^4)$ with
risk $R_{\epsilon}(\gb_{\epsilon};1) = \epsilon^2\dim \vTheta +
\epsilon^4 \scal_{\vTheta}/3 + O(\epsilon^6)$ where $\scal_{\vTheta}$
is the mean scalar curvature of $\vTheta$. The flat prior produces the
minimax estimator in this case.

\subsubsection{A sample application: $S^2$} \label{sssec:linear-priors-s2-example}
Let us apply theorem \ref{thm:linear-priors} to the case where
$\E=\E^3$, $\vTheta$ is the $2$-dimensional unit sphere $S^2$ in $\E^3$
and $G=\SO{3}$ is the group of linear, orientation-preserving
isometries of $\E^3$. In this case, the projection map
\begin{wrapfigure}[10]{l}[0pt]{0pt}
{
\psfrag{ny}{{\tiny $x$}}
\psfrag{y}{{\tiny $\hat \vtheta$}}
\psfrag{v}{{\tiny $t \cdot \d_{\hat \vtheta}\pi\cdot v$}}
\psfrag{w}{{\tiny $\gb_{\epsilon}(x)$}}
\includegraphics[width=4cm, height=4cm]{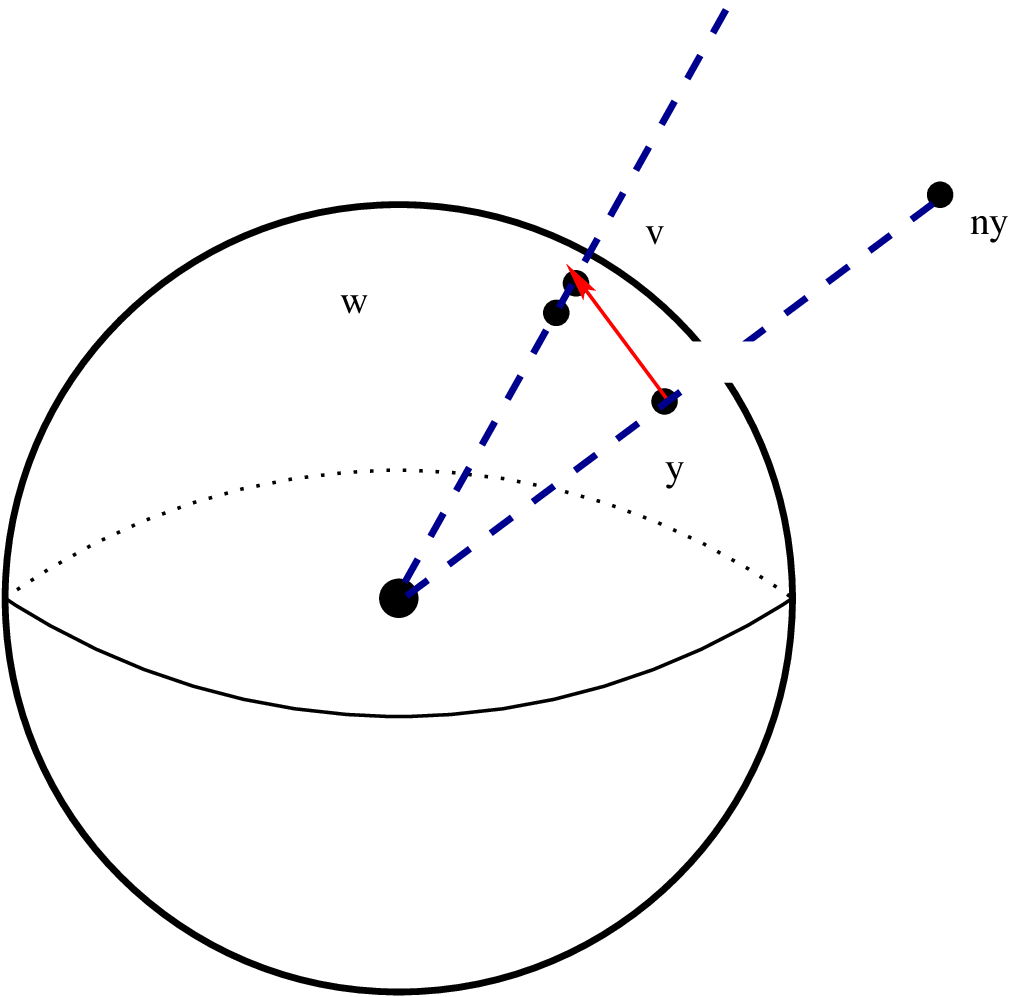}
}
\end{wrapfigure}
is $\pi(x) = x/|x|$ and the projection of $v$ onto $T_{\hat \vtheta}
\vTheta$ is the orthogonal projection $\bar{v} = \d_{\hat \vtheta}\pi(v)
= v - \langle v , \hat \vtheta \rangle\, \hat \vtheta$. The bayesian
estimator in this case is
\begin{align} \label{al:linear-priors-s2-example}
\hspace{40mm}&& \gb_{\epsilon}(x) = \exp(t \hat{v}) \cdot \hat \vtheta + O(\epsilon^4), &&
t=\frac{\epsilon^2 |\bar{v}|}{\langle v , \hat \vtheta \rangle + \beta}
\end{align}
where $\hat{v} \in \so{3}$ is the rotation by $\pi/2$ radians
counterclockwise in the plane orthogonal to $\bar{v}$.

If one supposes that $v \in S^2$, then $\beta=1$ and $0 \leq \alpha
<1$. Since $\tau(\iota)_{\vtheta} = \vtheta$, one computes that
\begin{align} \label{eq:linear-priors-s2-tildev-dot-tau}
\langle  \tilde{v}_v,\tau(\iota)_v \rangle  = (\alpha^{-2}-1) \log
\sqrt{\frac{1-\alpha}{1+\alpha}} + \frac{1}{\alpha}.
\end{align}
\begin{maximacode}
assume(alpha>0);
elambda : alpha*cos(b)+beta;
'integrate(log(elambda)*sin(b)*cos(b),b,0,
changevar(
ev(
ratsimp(
subst(beta=1,
tex(
(alpha^(-2)-1)*log(sqrt((1-alpha)/(1+alpha)))+1/alpha;
tex(
\end{maximacode}
On the other hand, the scalar curvature of the $2$-dimensional unit
sphere is twice the Gaussian curvature, hence is $2$, and the mean of
$\lambda$ is $1$, so $\scal_{\vTheta,\lambda}=2$. The bayesian risk is
therefore
\begin{align} \label{al:linear-priors-s2-br}
R(\gb_{\epsilon};\lambda_v) = 2 \epsilon^2 + \left(\frac{2}{3}+\langle  \tilde{v}_v,\tau(\iota)_v \rangle\right)\, \epsilon^4 + O(\epsilon^6).
\end{align}
Inspection of \eqref{eq:linear-priors-s2-tildev-dot-tau} shows that
the right-hand side is $\alpha$ at $\alpha=0,1$ and it is monotone
increasing on $[0,1]$. This verifies that the flat prior ($\alpha=0$)
yields the second-order minimax estimator.  

}

\subsection{Derivation and application of the projection map} \label{ssec:apps}
This section applies Lemma \ref{lem:wd} to a wide range of orbit
spaces. Lemma \ref{lem:wd} says that to construct the bayesian
estimator of theorem \ref{thm:best} it is necessary to give a concrete
description of a tubular neighbourhood $\tube$ and the projection map
$\pi : \tube \to \vTheta$ from the tubular neighbourhood to the group
orbit $\vTheta$.

It is also necessary to give a concrete description of the exponential
map of the riemannian manifold $(\vTheta,\metric{g}) \subset
(\E^s,\sigma)$. This problem is solved as in Theorem
\ref{thm:linear-priors}, where one uses the linear isomorphism between
$T_{\vtheta} \Theta$ and $\stab{\g}{\vtheta}^{\perp}$, which pulls
back the riemannian exponential map to the Lie group's exponential
map.

\cite[Chapters 1--3]{Chikuse} provide a nice background, aimed at
statisticians, for many applications of several of the orbit spaces
considered below.

\subsubsection{The sphere $S^{n-1}$} \label{sssec:a-1} 
Let $G=\Orth{\E^n}$, and $\vtheta\in\E^n$ be non-zero. The group orbit
$\orb{G}{\vtheta}$ is the sphere of radius $r=|\vtheta|$. Without loss
of generality, one can suppose that $r=1$ and $\E^n$ has a basis $e_i$
where $e_1 = \vtheta$. In this case, $\nb[\vtheta]{\orb{G}{\vtheta}} = \R
\vtheta$ and $g^{-1} x \in \vtheta + \nb[\vtheta]{\orb{G}{\vtheta}}$ iff
$g^{-1} x = \lambda e_1$ iff $x=\lambda\, ge_1$ and $\lambda = \pm
|x|$. Because $\pi$ must be the identity on $\orb{G}{\vtheta}$, one
sees that $\lambda>0$ and therefore $\forall x\neq 0$
\begin{equation} \label{eq:sn}
\pi(x) = g \vtheta = x/|x|.
\end{equation}
In this case $\tube = \E-\left\{ 0  \right\}$.

\subsubsection{The Stiefel manifold} \label{sssec:a-2} 
There is a natural generalisation of the unit sphere introduced by
Steifel~\cite{Chikuse,Husemoller}. Let $v=[v_1 \cdots v_k]$ be a
$k$-tuple of unit vectors $v_i \in \E^n$ for $k\leq n$ which are
mutually orthogonal. The set of all such orthonormal $k$-frames $v$ is
called a Stiefel manifold and denoted by $\stf{k}{\E^n}$. One can
naturally identify $\stf{k}{\E^n}$ as a subset of $\E=\Hom{\E^k,\E^n}$
(the $n \times k$ real matrices). The euclidean structure $\sigma =
\left\langle \cdot , \cdot \right\rangle$ on $\E$ is defined by
\begin{equation} \label{eq:eu}
\left\langle x , y \right\rangle = \trace{x'y}
\end{equation}
for all $x,y \in \E$ where $x'$ is the transpose of $x$. The group
$G=\Orth{\E^n}$ acts on $\E$ by left multiplication and with the frame
$\vtheta = [e_1 \cdots e_k]$
\begin{equation} \label{eq:vkn}
\stf{k}{\E^n} = \orb{G}{\vtheta}.
\end{equation}
Given $x \in \E$, the map
\begin{align} \label{al:k}
\kappa(x) &= x'x & \kappa : \Hom{\E^k,\E^n} \to \sym{\E^k},
\end{align}
from the $k\times k$ matrices to the symmetric $k\times k$ matrices,
defines a submersion when $x$ is of maximal rank $k$ and $G$ acts
transitively on the fibre of $\kappa$. Thus, if $x \in \E$ is of
maximal rank, then the normal space $x+\nb[x]{\orb{G}{x}}$ can be
identified with $x'x + \sym{\E^k}$ via the linearized map $\d_x \kappa$.

To compute the projection map $\pi : \tube \to \stf{k}{\E^n}$: let
$\tube$ be the connected component containing $\vtheta$ of the set of
$x\in \E$ of maximal rank. For each $x \in \tube$, $x'x$ is a
symmetric, positive-definite matrix and therefore $x'x$ has a unique
symmetric positive-definite square root $\tau =:
(x'x)^{\frac{1}{2}}$. Let us define
\begin{align} \label{al:sqrt}
\pi(x) &= x(x'x)^{-\frac{1}{2}} & \pi : \tube \to \stf{k}{\E^n}.
\end{align}
It is clear that $\pi$ is a $G$-equivariant map, $\pi(\vtheta)=\vtheta$
and since $\kappa \circ \pi$ maps $\tube$ to $1 \in \sym{\E^k}$, the
image is $\stf{k}{\E^n}$ and $\pi|_{\stf{k}{\E^n}}=id$. These facts
suffice to show that the map $\pi$ is indeed the projection map of the
normal bundle. (If one had taken another square root of $x'x$ to define
$\pi$, then $\pi(\vtheta) \neq \vtheta$, so that map could not be the
projection map of a tubular neighbourhood).

In the general case, let $\vtheta \in\E$ be of maximal rank and let
$\tau$ be the unique positive-definite symmetric square root of
$\theta' \vtheta$. The projection map $\pi : \tube \to \orb{G}{\vtheta}$
is then
\begin{align} \label{al:sqrt-2}
\pi(x) &= x(x'x)^{-\frac{1}{2}} \tau & \pi : \tube \to \orb{G}{\vtheta},
\end{align}
where $\tube$ is the set of maximal rank elements in $\E$.

One can specialize the above construction to obtain:
\begin{enumerate}
\item[$k=1:$] In this case, $\stf{1}{\E^n}= S^{n-1}$ and
  $(x'x)^{\frac{1}{2}}=|x|$, so \eqref{al:sqrt} specializes to yield
  the projection map onto $S^{n-1}$;
\item[$k=2:$] In this case, $\stf{2}{\E^n}$ is the unit sphere bundle
  of $S^{n-1}$, so \eqref{al:sqrt} specializes to yield the projection
  map from the set of non-collinear vectors in $\E^n \times \E^n$ to the
  unit sphere bundle $S(S^{n-1})$;
\item[$k=n:$] In this case, $\stf{n}{\E^n}= \Orth{\E^n}$ and
  $p=(x'x)^{\frac{1}{2}}$ is the polar factor in the polar
  decomposition $x=gp$ where $g \in \Orth{\E^n}$ and $p \in
  \sym{\E^n}$. Thus \eqref{al:sqrt} specializes to yield the
  projection map of an arbitary invertible matrix onto its orthogonal
  part. See example \ref{sssec:a-9} below for a general construction.
\end{enumerate}

\subsubsection{The real Grassmannian manifold} \label{sssec:a-3} 
Another group orbit space that is closely related to the Steifel
manifold is the manifold of unoriented $k$-dimensional planes in
$\E^n$, called the Grassmannian
manifold~\cite{Chikuse,Husemoller}. Let $\gr{k}{\E^n}$ denote the
Grassmannian manifold of unoriented $k$-planes in $\E^n$. A $k$-plane
$\pl$ in $\E^n$ is uniquely characterized by an orthogonal projection
$\p_\pl \in \Hom{\E^n,\E^n}$ which is symmetric, has an image equal to
$\pl$ and kernel equal to $\pl^{\perp}$. Since each plane $\pl$ and
its orthogonal complement admit an orthonormal basis, we have the
following natural description of the Grassmannian manifold as an orbit
space ($k+l=n$)
\begin{align} \label{al:gr}
\gr{k}{\E^n} &= \orb{G}{\vtheta} & G=\Orth{\E^n},\ \ \vtheta=\begin{bmatrix*}[l]
1_k & 0\\0 & 0_l
\end{bmatrix*}
\intertext{where the action of $G$ on the symmetric matrices
  $\sym{\E^n} \subset \Hom{\E^n,\E^n}$ is by conjugation/congruence}
g\cdot x &= gxg' & \forall g\in G, x\in \sym{\E^n}. \notag
\intertext{The Grassmannian manifold is equivariantly diffeomorphic
  to} \gr{k}{\E^n} & =
\Orth{\E^n}/\Orth{\E^k}\times\Orth{\E^l}. \notag
\end{align}
To identify the normal space $\nb[\vtheta]{\gr{k}{\E^n}}$, a
computation shows that 
\begin{align} \label{al:nb-gr}
T_\vtheta \gr{k}{\E^n} &= \left\{ \begin{bmatrix*}[l]
0_k & \alpha\\ \alpha' & 0_l
\end{bmatrix*}\ :\ \alpha\in\Hom{\E^l,\E^k}  \right\} 
\intertext{whence the normal space is}
\nb[\vtheta]{\gr{k}{\E^n}} &= \left\{ \begin{bmatrix*}[r]
\beta & 0\\ 0 & \gamma
\end{bmatrix*}\ :\ \beta\in\sym{\E^k},\gamma\in\sym{\E^l}   \right\}.
\end{align}
Recall that every $x\in\sym{\E^n}$ is congruent via some $g \in
\Orth{\E^n}$ to a diagonal matrix
$\lambda=\diag{\lambda_1,\ldots,\lambda_n}$ where the eigenvalues
satisfy $\lambda_1 \geq \lambda_2 \geq \cdots \geq \lambda_n$. One may
then define, for all those $x$ with $\lambda_k > \lambda_{k+1}$, the
projection of $x$ onto the Grassmannian manifold via
\begin{align} \label{al:pi-gr}
x = g \lambda g' &&\implies&& \pi(x) &= g \vtheta g'.
\end{align}
Equivalently, $x$ is congruent via a $g_1 \in \Orth{\E^n}$ to a matrix
$y$ in $\nb[\vtheta]{\gr{k}{\E^n}}$ where the eigenvalues of $\beta$
dominate those of $\gamma$. In this case, one can define $\pi(x)=g_1
\vtheta g_1'$. The two definitions of $\pi$ coincide since $g=g_1 \bmod
$ the stabilizer of $\vtheta$. In this case, the tubular neighbourhood
$\tube$ is the connected component containing $\vtheta$ of the set of
$x\in\sym{\E^n}$ which have eigenvalues such that
$\lambda_k>\lambda_{k+1}$.

\begin{remark} \label{re:gr}
{\rm

In the above construction, one may replace
$(\E^n,\sym{\E^n},\Orth{\E^n})$ and the real transpose by
$(\C^n,\sym{\C^n},\U{}_n \subset \Orth{\C^n})$ and the conjugate
transpose (resp.  $(\H^n,\sym{\H^n},\Sp{\H^n} \subset \Orth{\H^n})$
and the quaternionic conjugate transpose) to obtain the grassmannian
of complex $k$-planes in $\C^n$ (resp. the grassmannian of
quaternionic $k$-planes in quaternionic $n$-space $\H^n$). In these
cases, one views $\C^n$ (resp. $\H^n$) as a real euclidean vector
space, where the euclidean structure is provided by the real part of
the hermitian (resp. quaternionic) structure, and the isometries
preserve both the euclidean structure and the complex
(resp. quaternionic) structure. The construction of the projection map
of the tubular neighbourhood is essentially the same. Since the
conclusions of Theorem \ref{thm:best} rely only on the real euclidean
structure, the conclusions remain valid.

}
\end{remark}

\subsubsection{The singular-value decomposition} \label{sssec:a-4}
{
\renewcommand{\p}[0]{{\mathfrak p}}
\def\btheta{\boldsymbol{\vtheta}}

Let $\E=\Hom{\E^k,\E^n}$ (the $n \times k$ real matrices) with the
euclidean structure defined as in \ref{sssec:a-2} and let
$G=\Orth{\E^n} \times \Orth{\E^k}$ act on $\E$ by
\begin{align} \label{al:svd-act}
g \cdot x &= g_1 x g_2^{-1} & \forall g=(g_1,g_2)\in G, x \in \E.
\end{align}
Without loss of generality, one may assume that $k\geq n$. In this
case, the well-known singular-value decomposition says that there is a
$g \in G$ such that $g^{-1} \cdot x = \vtheta$ where $\vtheta$ is in the
``diagonal'' form
\begin{align} \label{eq:svd-diag}
\vtheta &= \left[
\begin{array}{cccc|c}
\vtheta_1 & 0 & \cdots   & 0 & 0\\
0 & \vtheta_2 & \cdots   & 0 & 0\\
\vdots & \vdots & \ddots& \vdots & \vdots\\
0 & 0 & \cdots & \vtheta_n & 0
\end{array}
\right]
&&
\text{and }\vtheta_1 \geq \vtheta_2 \geq \cdots \geq \vtheta_n \geq 0.
\end{align}
 To
 compute the normal space of $\orb{G}{\vtheta}$ is somewhat involved,
 but one can simplify the computation in the following way.

A non-degenerate symmetric bilinear form $\eta$ is of index $(n,k)$ if
it is positive definite (resp. negative definite) on a subspace of
dimension $n$ (resp. $k$). Let $\E^{n,k}$ be a real vector space with
indefinite inner product $\eta$ of index $(n,k)$~\cite{ONeill}. The
orthogonal group, $H=\Orth{\E^{n,k}}$, of this pseudo-euclidean space
is non-compact but its maximal compact subgroup is $G=\Orth{\E^n}
\times \Orth{\E^k}$.
\footnote{In the special case of $k=1$, one has Lorentzian geometry,
  which is important in special relativity \cite{ONeill}.}
\ The Lie algebra, $\h=\orth{}_{n,k}$, of $\Orth{\E^{n,k}}$ contains
the subalgebra $\g=\orth{}_n \oplus \orth{}_k$ and its orthogonal
complement relative to the trace form, the subspace
\begin{align} \label{al:svd-p}
\p &= \left\{ 
\begin{bmatrix*}[l]
0_n & x \\ x' & 0_k
\end{bmatrix*}  \ : \ x \in \Hom{\E^k,\E^n} \right\}.
\end{align}
The action by conjugation of $G=\Orth{\E^n} \times \Orth{\E^k}$ on
$\p$ is naturally identified with the action defined in
\eqref{al:svd-act}.

For $\btheta \in \p$, the orbit of $G$ has
\begin{align} \label{al:svd-tns}
T_{\btheta} (\orb{G}{\btheta}) &= \ad{\g} \btheta &&
\nb[\btheta]{\orb{G}{\btheta}} = \p \cap \left( \ad{\g} \btheta  \right)^\perp =\p_{\btheta }
\end{align}
where $\p_{\btheta}$ is the intersection of the centralizer of $\btheta$
in $\h$ with $\p$. If one supposes that
\begin{align} \label{al:svd-theta}
\btheta &= 
\begin{bmatrix*}[l]
0_n & \vtheta\\ \theta' & 0_k
\end{bmatrix*},
&
 \text{ and $\vtheta$ is in the diagonal form of \eqref{eq:svd-diag}}
\end{align}
then $\p_{\btheta}$ contains all elements of the same form as
$\btheta$. Therefore, for 
\def\x{{\bf x}}
\begin{equation} \label{eq:svd-x}
\x = 
\begin{bmatrix*}[l]
0_n & x\\x' & 0_k
\end{bmatrix*}\in\p
\end{equation}
one knows from the singular-value decomposition of $x$, that $\x$ is
conjugate via a $g\in G$ to an element in $\p_{\btheta}$. Thus, one
has that
\begin{align} 
\pi(\x) &= g \btheta g^{-1} &&  \text{where }
g=(g_1,g_2)\ \  \text{and $g_1^{-1}xg_2$ is diagonal} \label{al:svd-pi-1}\\
\pi &: {\bf T} \to \orb{G}{\btheta} \notag
\intertext{or, equivalently}
\pi(x) &= g_1 \vtheta g_2^{-1} && \pi : \tube \to \orb{G}{\vtheta} \label{al:svd-pi-2}
\end{align}
where $\tube$ is the set of $x$ whose singular values have collisions
nowhere except possibly where those of $\vtheta$ collide and
${\bf T}$ is defined similarly.
\begin{remark} \label{re:svd}
{\rm

From an applied point of view, one may wish to approximate the matrix
$x \in \Hom{\E^k,\E^n}$ by a low rank matrix. To do this, one could
specify $\vtheta$ in \eqref{eq:svd-diag} with $\vtheta_i=1$ for
$i=1,\ldots,l$ and $\vtheta_i=0$ for $i>l$. One can then use
(\ref{al:svd-pi-1}--\ref{al:svd-pi-2}) to project $x$ onto the
low-rank matrix orbit $\orb{G}{\vtheta}$. In applied mathematics, one
approximates $x$ by a low rank matrix in a slightly different
manner. One computes the singular-value decomposition
$\lambda=g_1^{-1}xg_2$ with $\lambda$ in the diagonal form
\eqref{eq:svd-diag}, one truncates $\lambda$ to a diagonal matrix
$\lambda_0$ by zeroing out the singular values $\lambda_{l+1}, \ldots,
\lambda_n$ and then one defines $x_0 = g_1 \lambda_0 g_2^{-1}$ to be
the low-rank approximation (in practice, $l \ll n,k$ so one saves only
the $l$ right and left singular vectors not $g_1$ and $g_2$). In this
case, one knows that the set of rank $l$ matrices is the union over
all rank $l$ orbits, and one uses $x$ to determine the particular
orbit onto which $x$ is projected. By construction, this determines
the rank $l$ orbit that is closest to $x$.

}
\end{remark}
}

\subsubsection{The lagrangian grassmannian} \label{sssec:a-5}
{
\renewcommand{\p}[0]{{\mathfrak p}}
\def\btheta{\boldsymbol{\vtheta}}
\def\x{{\bf x}}

A totally real subspace $\Pi$ of $\C^n$ is a real subspace which has
the distinguished property that $\Pi \cap i \Pi = 0$; a totally real
$n$-plane in $\C^n$ is also called a lagrangian plane. Let $\Lambda_n
= \U{}_n/\Orth{}_n$ be the manifold of lagrangian planes in
$\C^n$. This manifold arose in Maslov's work on quantisation
\cite{Maslov}. It is well-known in hamiltonian mechanics that the
stable and unstable subspaces of a hyperbolic linear hamiltonian
system are both lagrangian planes \cite{AbrahamMarsden}. 

One can embed $\Lambda_n$ into $\E = \sym{\C^n}$, the subspace of
complex $n \times n$ matrices which are symmetric under the transpose
(not conjugate transpose). The euclidean structure
$\sigma=\left\langle \cdot , \cdot  \right\rangle$ on $\E$ is defined
by
\begin{align} \label{al:lg-euc}
\left\langle x , y \right\rangle &= \trace{x^*y} &&\forall x,y \in \sym{\C^n},
\end{align} 
where $x^*$ is the conjugate transpose of $x$. The unitary group
$G=\U{}_n$ acts by isometries of $(\E,\sigma)$ by
\begin{align} \label{al:lg-act}
g\cdot x &= gxg' && \forall g\in\U{}_n, x\in\E.
\end{align}
The stabilizer of $\vtheta=1 \in \E$ under this action is the real orthogonal
subgroup $\Orth{}_n$, so $\Lambda_n = \orb{G}{\vtheta}$.

To compute the projection map, one observes that
$T_\vtheta(\orb{G}{\vtheta}) = i\cdot\sym{\R^n}$ and
$\nb[\vtheta]{\orb{G}{\vtheta}}=\sym{\R^n}$. To define the projection of
$x\in\E$ onto $\orb{G}{\vtheta}$, it is necessary that there exists a
unitary $g\in\U{}_n$ such that $g^{-1}\cdot x \in \vtheta +
\nb[\vtheta]{\orb{G}{\vtheta}} = \sym{\R^n}$. Thus,
\begin{align} \label{al:lg-pix}
\exists g\in\U{}_n, p\in\sym{\R^n} \text{ such that } x&=gpg'
&\implies&& \pi(x)=gg'.
\end{align}
To see the connection with the above description of $\Lambda_n$, note
that if $x \in \sym{\C^n}$ admits a factorization as in
\eqref{al:lg-pix}, then the symmetric quadratic from $q_x(u,v) = u'xv$
is totally real on the real subspace $w\cdot\R^n \subset \C^n$ where
$w=(g')^{-1}$; and conversely, if $q_x$ is totally real on
$w\cdot\R^n$, then $x$ admits a factorization as in
\eqref{al:lg-pix}. Provided that $x$ is non-degenerate, $w \in\U{}_n$
and hence $g$, is uniquely defined up to an element in $\Orth{}_n$. 

\begin{remark} \label{re:lg}
{\rm

As in the singular-value decomposition, there is a natural Cartan
decomposition that is associated with this
example~\cite{Helgason}. Let $H=\Sp{\R^{2n}}$ be the group of
symplectic automorphisms of $\R^{2n}=\C^n$ where the symplectic form
is the skew-symmetric bilinear form
\begin{align} \label{al:lg-sf}
\omega(x,y) &= \im{x^*y} &&\forall x,y\in\C^n.
\end{align} 
The Lie algebra $\h$ of $H$ admits a Cartan decomposition $\h=\k+\p$
where
\begin{align} 
\k &= \left\{ \begin{bmatrix*}[r]
\alpha&\beta\\-\beta&\alpha
\end{bmatrix*}\ :\ \alpha=-\alpha', \beta=\beta'  \right\}, &&
\p = \left\{ \begin{bmatrix*}[r]
a&b\\b&-a
\end{bmatrix*}\ :\ a'=a,b'=b  \right\}
\end{align}
where all matrices are real. The maps $(\alpha,\beta) \mapsto \alpha+i
\beta$ and $(a,b) \mapsto a+ib$ shows that $\k \simeq \un{}_n$ and $\p
\simeq \sym{\C^n}$. The action of $K \simeq \U{}_n$ on $\p$ by
conjugation is identified with the action on $\sym{\C^n}$ by
congruences \eqref{al:lg-act}. We note that, by the theory of Cartan
subalgebras, an $\x=(a,b)\in\p$ may be diagonalized over $\R$ on a
basis that is simultaneously symplectic and orthogonal, that is,
$\x\in\p$ is conjugate to a real diagonal matrix via a unitary
transformation~\cite{Helgason}. This implies the validity of the
decomposition \eqref{al:lg-pix} on the set of $\x \in \p$
($x\in\sym{\C^n}$) which are non-singular.

To compute the projection map $\pi : \tube \to \orb{G}{\vtheta}$ when
$\vtheta$ is in general position, it is most easy to apply
\eqref{al:svd-tns}. 

}
\end{remark}

\begin{remark} \label{re:lg-2}
{\rm
In the above construction, one may replace
$(\E^n,\C^n,\E=\sym{\C^n},\U{}_n=\Orth{\C^n})$ and the real transpose
by $(\C^n,\H^n,\E=\sym{\H^n},\Sp{\H^n}=\Orth{\H^n})$ and the complex
conjugate transpose. The orbit of $\vtheta=1$ is the homogeneous space
$\Sp{\H^n}/\U{\C^n}$, which is the grassmannian of totally complex
$n$-planes in $\H^n$.    
}
\end{remark}
}

\subsubsection{The isotropic grassmannians} \label{sssec:a-6}
{
\renewcommand{\p}[0]{{\mathfrak p}}
\def\btheta{\boldsymbol{\vtheta}}
\def\x{{\bf x}}

There are several distingished orbits in $\sym{\C^n}$ in addition to
the lagrangian grassmannian. From the natural embedding $\R^k \subset
\R^n \subset \C^n$, one obtains the grassmannian manifold of isotropic
(or totally real) $k$-planes in $\C^n$
\begin{align} \label{al:iso-mfld}
\Lambda_{k,n} &= \orb{\U{}_n}{\R^k} = \U{}_n/\Orth{}_k \times
\U{}_l && (k+l=n).
\end{align}
If one defines
\begin{align} \label{al:iso-orb}
\vtheta &=
\begin{bmatrix*}[r]
1_k&0\\0&0_l
\end{bmatrix*}
&&
\Lambda_{k,n} = \orb{\U{}_n}{\vtheta} \subset \sym{\C^n}.
\end{align}
The grassmannian of totally real $k$-planes in $\C^n$ arise naturally
in hamiltonian mechanics. For example, the tangent spaces to an orbit
of the Keplerian $2$-body problem trace a closed curve in
$\Lambda_{k,n}$.

To compute the tubular neighbourhood of $\Lambda_{k,n}$ and its
projection map, remark (\ref{re:lg}) implies that each $x \in
\sym{\C^n}$ is congruent via a $g\in\U{}_n$ to a real diagonal
$\lambda = \diag{\lambda_1,\ldots,\lambda_n}$ with $\lambda_1 \geq
\lambda_2 \geq \cdots \geq \lambda_n$. If $\lambda_k > \lambda_{k+1}$,
then the first $k$ largest eigenvalues and consequently the sum of the
eigenspaces is uniquely determined. One can then define $\pi(x) = g
\vtheta g'$, which is well-defined.

Geometrically, this condition amounts to the following. Remark
(\ref{re:lg}) implies that there is some lagrangian plane $\ell$ on
which $q_x$ is totally real. If the condition on the eigenvalues of
$x$ is satisfied, then there is a unique isotropic $k$-plane $\ell_k
\subset \ell$ such that $\ell = \ell_k \oplus \ell_k^{\perp}$. The
lagrangian $\ell$ and $\ell_k^{\perp}$ are not uniquely defined (if
$q_x$ is degenerate), but $\ell_k$ itself is. The map $x \mapsto
\ell_k$ is the projection map of the tubular neighbourhood.
}

\subsubsection{The manifold of orthogonal complex structures on $\R^{2n}$} \label{sssec:a-7}
{
\renewcommand{\p}[0]{{\mathfrak p}}
\def\btheta{\boldsymbol{\vtheta}}
\def\x{{\bf x}}

A complex structure $J$ on $\R^{2n}$ is a linear map such that
$J^2=-1$; it is orthogonal if $J'=J^{-1}$. A complex structure has
eigenvalues $\pm i$ (repeated $n$ times), so it is necessarily
orientation preserving. The conjugate of an orthogonal complex
structure is also an orthogonal complex structure, and conversely, any
complex structure is conjugate to the standard complex structure by an
orthogonal conjugacy. 

Let us construct the manifold of orthogonal complex structures as a
homogeneous space. Embed $\U{\C^n}$ into $G=\SO{\E^{2n}}$ by
\begin{align} \label{al:oc-em}
x&=\alpha+i \beta \mapsto 
\x=\begin{bmatrix*}[r]
\alpha&\beta\\-\beta&\alpha
\end{bmatrix*}
&&
\forall x\in\Hom{\C^n,\C^n}
\end{align}
where $\alpha$ (resp. $\beta$) is the real (resp. imaginary) part of
$x$. Let $\SO{\E^{2n}}$ act on its Lie algebra $\E = \so{}_{2n}$ by
conjugation. The standard complex structure on $\R^{2n}$ is the element
\begin{align} \label{al:oc-orb}
\vtheta&=\begin{bmatrix*}[r]
0&-1\\1&0
\end{bmatrix*}
&&
\text{ which has the orbit }\orb{G}{\vtheta}=\SO{}_{2n}/\U{}_n
\end{align}
since $\U{}_n$ is the stabilizer group of $\vtheta$.

The tangent and normal space to $\orb{G}{\vtheta}$ at $\vtheta$ are equal to 
\begin{align} \label{al:oc-tns}
T_{\vtheta} (\orb{G}{\vtheta}) &= \ad{\p} \vtheta, &
\nb[\vtheta]{\orb{G}{\vtheta}} = \un{}_n, && \p = \left\{ \begin{bmatrix*}[r]
\delta&\gamma\\\gamma&-\delta 
\end{bmatrix*}\ : \ \delta,\gamma\in\so{}_n  \right\}.
\end{align}
where $\p$ is the orthogonal complement of $\un{}_n$ in
$\so{}_{2n}$. It follows from the fact that every $x \in \so{}_{2n}$
is contained in a Cartan subalgebra that 
\begin{align} \label{al:oc-x}
\exists g\in\SO{}_{2n} \text{ such that  }g'xg&=
\left[
\begin{tabular}{c|c}
\hzero{n}  & $\begin{array}{ccc} \phantom{-}\alpha_1 & & \\ &\ddots&
  \\ &&\phantom{-}\alpha_n \end{array}$\\\hline
$\begin{array}{ccc} -\alpha_1 & & \\ &\ddots&
  \\ &&-\alpha_n \end{array}$ & \hzero{n}
\end{tabular} 
\right] =\alpha,
\end{align}
and the stabilizer of $\alpha$ is contained in $\U{}_n$ provided that
$\det x \neq 0$. Thus, the tubular neighbourhood $\tube$ of
$\orb{G}{\vtheta}$ is the connected component containing $\vtheta$ of
the set of $x \in \so{}_{2n}$ such that $\det x \neq 0$. The
projection map $\pi(x) = g \vtheta g'$ is therefore defined on $\tube$.

\begin{remark} \label{re:oc}
{\rm
One can also define the manifold of unitary quaternionic structures
$J$ on $\C^{2n}$. In this case, the homogeneous space is
$\SU{\C^{2n}}/\Sp{\H^n}$ and the construction is essentially the same as
above.
}
\end{remark}
}

\subsubsection{Adjoint orbits} \label{sssec:a-8}
{
\renewcommand{\p}[0]{{\mathfrak p}}
\def\btheta{\boldsymbol{\vtheta}}
\def\x{{\bf x}}

Let $G \subset H$ be compact Lie groups and let $\g \subset \h$ be
their Lie algebras. The negative Cartan-Killing form on $\h$, $(x,y)
\mapsto -\trace{\ad{x} \cdot \ad{y}}$, defines a $G$-invariant
euclidean structure, where $G$ acts on $\h$ by the adjoint action
(conjugation) \cite{Helgason}. Let $\vtheta \in \h$ and let
$\stab{G}{\vtheta}$ be the stabilizer of $\vtheta$ in $G$, and
$\g_\vtheta$ be its Lie algebra. One has
\begin{align} \label{al:ao-tns}
T_{\vtheta} (\orb{G}{\vtheta}) &= \ad{\g} \vtheta, &
\nb[\vtheta]{\orb{G}{\vtheta}} = \ad{\vtheta}^{-1}\left( \g^\perp \right)
\shortintertext{and}
\orb{G}{\vtheta}&=G/\stab{G}{\vtheta}
\end{align}
One knows that there is an equivariant tubular neighbourhood $\tube$
of $\orb{G}{\vtheta}$ such that the projection map $\pi : \tube \to
\orb{G}{\vtheta}$ is defined. The examples above may be formulated in
these terms.

}

\subsubsection{The group $G$ itself} \label{sssec:a-9}
{
\renewcommand{\p}[0]{{\mathfrak p}}
\def\btheta{\boldsymbol{\vtheta}}
\def\x{{\bf x}}

If $G \subset \Orth{\E^n}$, then $G \subset \Hom{\E^n,\E^n}=\E$. With
the euclidean structure on $\E$ defined as in \eqref{eq:eu}, one
obtains a decomposition
\begin{align} \label{al:g-dec}
\E &= \g + \p = T_1 G + \nb[1]{G}.
\end{align}
The projection map of the tubular neighbourhood $\tube$ can be defined
as: if, given $x\in\E$, there is a unique $g\in G$ such that $g^{-1} x
\in \p$, then $\pi(x):=g$. We see that the projection is a
generalization of the polar decomposition encountered above in the
$k=n$ case of example \ref{sssec:a-2}.

}

\section{Regression problems} \label{sec:reg} 
This section deals primarily with the first-order conditions for a
class of non-linear regression problems. Despite the fact that section
\ref{sec:homog-id} showed the construction of second-order minimax
estimators, the geometry that underlay those constructions is very
similar to that required here. We also give a numerical example of the
derived regressor in the specific setting.

Let $\theta_1,\ldots,\theta_k$ be a collection of design points on a
manifold $\Theta$ and let $y_1,\ldots,y_k$ be a random sample of
points on a manifold $\Lambda$ embedded in a euclidean space as in
diagram \eqref{eq:cd-1}. Let the conditional probability density of
$y$ given $\theta \in \Theta$ be $f(y|\gamma(\theta))$, where $\gamma$
is an unknown map.
\begin{equation}\label{eq:cd-1}
\xymatrix@!R@R=6pt{
&& (\E,\sigma) \ar@{<-}[dd]^{\j}\\
\\
(\Theta,\metric{g}) \ar[rruu]^{\j \cdot \gamma}\ar[rr]^{\gamma} && (\Lambda,\metric{h}),
}
\end{equation}
One is interested in estimating the unknown map $\gamma$ by, say,
minimizing the discrepancy function
\begin{equation} \label{eq:reg-sq-err}
\ssqerr{\gamma} = \frac{1}{k} \ds\sum_{l=1}^k \ell(y_l,\gamma(\theta_l)).
\end{equation}
The loss function $\ell : \Lambda \times \Lambda \to \R$ is assumed to
satisfy
\begin{enumerate}
\item $\ell$ is continuous everywhere and smooth a.e.;
\item $\ell(x,y)=\ell(y,x)$ for all $x,y \in \Lambda$;
\item $\ell(x,y) \geq 0$ for all $x,y \in \Lambda$ and equals $0$ iff
  $x=y$;
\item the hessian $\left. \nabla \d\ell \right|_{\nb[]{\Delta
    \Lambda}}$ is non-degenerate, where $\Delta \Lambda=\left\{
  (x,x)\ : x\in \Lambda \right\}$ is the diagonal.
\end{enumerate}
Natural examples of loss functions include: 1) that induced by the
euclidean structure, $\ell(x,y) = |\j x - \j y|^2$ for all $x,y \in
\Lambda$; and 2) that induced by the intrinsic riemannian distance on
$\Lambda$, $\ell(x,y) = \dist(x,,y)^2$ for all $x,y \in \Lambda$.

If one assumes that the space of admissible maps $\gamma$ is
parameterized by a compact finite-dimensional manifold $\Gamma$, a
solution to this estimation problem is
\begin{equation} \label{eq:reg-soln}
\hat{\gamma} = \argmin \left\{ \ssqerr{\gamma} \ :\ \gamma\in \Gamma\right\}.
\end{equation}

To highlight the geometry and minimise the analysis, it is assumed
throughout that the space of admissible maps $\Gamma$ is a compact
finite-dimensional submanifold of $C^{\infty}(\Theta,\Lambda)$.

\subsection{A first-order condition for the least-squares solution} \label{ssec:reg-foc} 
To state the first-order condition for a solution to
\eqref{eq:reg-soln}, one needs some results from differential
topology. The space $C^{\infty}(\Theta,\Lambda)$ may be equipped with
the structure of a Fr\'echet manifold and one may consider $\Gamma$ as
a smooth submanifold.
\footnote{A Fr\'echet space is a Hausdorff, locally convex vector
  space, with a complete translationally invariant metric
  \cite{Rudin}. A Fr\'echet manifold is a Hausdorff topological space
  with an atlas of smooth coordinate charts into a Fr\'echet space.}
\ There are canonical smooth maps $\ev_i : \Gamma \to \Lambda$ defined
by
\begin{align} \label{al:reg-ev}
\xymatrix{
\Theta \times C^{\infty}(\Theta,\Lambda) \ar[r]^(.7){\ev} & \Lambda \\
\Theta \times \Gamma \ar[u]^{1 \times {\rm incl.}} \ar[ur]^{\ev}\\
\Gamma \ar[u]^{\theta_i \times 1} \ar[uur]_{\ev_i}
}
&&
\text{ where\ \ } \ev(\theta,\gamma) = \gamma(\theta).
\end{align}

\begin{proposition} \label{pr:reg-foc}
The first-order condition for $\gamma \in \Gamma$ to be a minimizer of
$\ssqerr{}$ \eqref{eq:reg-sq-err} is that $\d_\gamma  \ssqerr{} \in
\nb[\gamma]{\Gamma} \subset
T^*_{\gamma}C^{\infty}(\Theta,\Lambda)$. That is,
\begin{align} \label{al:reg-foc}
\left. \ds\sum_{i=1}^k \left( \d_{\lambda_i}\j \cdot \d_\gamma \ev_i
\right)^* \cdot \left( \j(y_i) - \j\ev_i(\gamma)\right)
\right|_{T_\gamma \Gamma}  &= 0 & \text{ where }
\lambda_i=\gamma(\theta_i).
\end{align}
\end{proposition}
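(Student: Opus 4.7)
My plan is to treat $\ssqerr{}$ as a smooth real-valued function on the finite-dimensional submanifold $\Gamma \subset C^{\infty}(\Theta,\Lambda)$ and apply the standard first-order optimality condition: at an interior local minimizer $\gamma$, the differential $\d_\gamma\ssqerr{}$ must annihilate every tangent vector in $T_\gamma \Gamma$, which is precisely the statement $\d_\gamma\ssqerr{} \in \nb[\gamma]{\Gamma}$. The content of the proposition is then simply to rewrite this coordinate-free condition in the explicit form \eqref{al:reg-foc} by unpacking the chain of maps
$\Gamma \xrightarrow{\ev_i} \Lambda \xrightarrow{\j} \E$ coming from the commuting diagram \eqref{al:reg-ev}.

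I would first substitute the euclidean loss $\ell(x,y)=|\j x-\j y|^2$ into \eqref{eq:reg-sq-err} and factor each summand through the evaluation maps, giving $\ssqerr{\gamma}=\tfrac{1}{k}\sum_i |\j(y_i)-\j\circ\ev_i(\gamma)|^2$. A direct chain-rule computation in the direction $\delta\in T_\gamma\Gamma$, using $\d_\gamma(\j\circ\ev_i)=\d_{\lambda_i}\j\cdot\d_\gamma\ev_i$ with $\lambda_i=\gamma(\theta_i)$, yields
\begin{equation*}
\d_\gamma\ssqerr{}(\delta) \;=\; -\frac{2}{k}\ds\sum_{i=1}^k \bigl\langle \j(y_i)-\j\ev_i(\gamma),\; \d_{\lambda_i}\j\cdot \d_\gamma\ev_i(\delta)\bigr\rangle.
\end{equation*}
Moving the linear operator $\d_{\lambda_i}\j\cdot\d_\gamma\ev_i:T_\gamma\Gamma\to\E$ across the pairing by its adjoint and then demanding that the result vanish for every $\delta\in T_\gamma\Gamma$ gives exactly \eqref{al:reg-foc}; the harmless prefactor $-2/k$ drops out.

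The one technical point is that $C^\infty(\Theta,\Lambda)$ is a Fr\'echet rather than a Banach or finite-dimensional manifold, so a priori the differential of $\ev$ and of $\ssqerr{}$ must be interpreted in the Fr\'echet-smooth sense. However, because $\Gamma$ is assumed to be a compact finite-dimensional submanifold, pulling everything back through the inclusion $\Gamma\hookrightarrow C^\infty(\Theta,\Lambda)$ reduces the argument to ordinary calculus on $\Gamma$, and $\d_\gamma\ev_i:T_\gamma\Gamma\to T_{\lambda_i}\Lambda$ is well-defined by the commutative diagram \eqref{al:reg-ev}. I expect this bookkeeping of tangent spaces — confirming that the restriction to $T_\gamma\Gamma$ really is what the abstract normal-bundle condition prescribes — to be the only step requiring care; the underlying analysis is the elementary first-order optimality principle for a smooth function on a manifold.
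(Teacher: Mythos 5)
Your proposal is correct and follows essentially the same route as the paper's proof: identify tangent vectors to $\Gamma$ inside $T_\gamma C^{\infty}(\Theta,\Lambda)$, differentiate the euclidean discrepancy $\frac{1}{k}\sum_i |\j(y_i)-\j\ev_i(\gamma)|^2$ by the chain rule through $\d_{\lambda_i}\j\cdot\d_\gamma\ev_i$, and pass the operator to its adjoint using the euclidean identification of $T\E$ with $T^*\E$, so that vanishing on $T_\gamma\Gamma$ is exactly \eqref{al:reg-foc}. Your remark that compact finite-dimensionality of $\Gamma$ reduces the Fr\'echet-manifold issue to ordinary calculus matches the paper's brief discussion of tangent vectors as sections of $\gamma^*T\Lambda$.
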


\begin{remark} \label{re:pr:reg-foc}
{\rm

We have seen above that compact group orbits are important examples of
smooth manifolds; and each of these lie within a sphere of constant
radius. Thus, if $\Lambda$ is contained in a sphere of constant
radius, proposition \ref{pr:reg-foc} yields the first-order condition
\begin{align}
\left.\ds\sum_{i=1}^k \left( \d_{\lambda_i}\j \cdot \d_\gamma \ev_i
\right)^* \cdot \j(y_i) \right|_{T_\gamma \Gamma}  &=0.
\end{align}

}
\end{remark}

\begin{proof}[Proof of Proposition \ref{pr:reg-foc}]
Let us recall the definition of a tangent vector $v \in T_{\gamma}
C^{\infty}(\Theta,\Lambda)$. One may view $v$ as the derivative at
$t=0$ of an equivalence class of smooth curves $\gamma_t$ with
$\gamma_{t=0}=\gamma$. For each $\theta\in \Theta$, $\gamma_t(\theta)$
is a smooth curve on $\Lambda$ through $\gamma(\theta)$. Thus, a
tangent vector $v \in T_{\gamma} C^{\infty}(\Theta,\Lambda)$ is a
smooth map $v : \Theta \to T \Lambda$ such that $v(\theta) \in
T_{\gamma(\theta)} \Lambda$ for all $\theta$ (differential geometers
say that $v$ is a smooth section of $\gamma^*T \Lambda$). It follows
that if $v \in T_\gamma \Gamma$, then $\d_\gamma \ev_i \cdot v$ is a
tangent vector in $T_{\gamma(\theta_i)} \Lambda$.

If $v \in T_\gamma \Gamma$ is a tangent vector, then the chain rule
shows that
\begin{align} \label{al:reg-dsq}
\d_\gamma \ssqerr{}\cdot v &= \frac{2}{k} \ds\sum_{i=1}^k \left\langle \j(y_i)
, \d_{\lambda_i}\j \cdot \d_\gamma \ev_i \cdot v \right\rangle -
\left\langle \j \ev_i(\gamma), \d_{\lambda_i}\j \cdot \d_\gamma \ev_i \cdot v \right\rangle\notag\\
&= \frac{2}{k} \ds\sum_{i=1}^k \left\langle (\d_{\lambda_i}\j \cdot
\d_\gamma \ev_i)^* \cdot \left( \j(y_i) - \j\ev_i(\gamma) \right) ,  v \right\rangle,
\end{align}
where one uses the fact that the euclidean structure on $\E$ allows
one to identify $T_{\bullet}\E$ and $T^*_{\bullet}\E$. This yields
\eqref{al:reg-foc}.
\end{proof}

\subsubsection{Least-squares for linear maps} \label{sssec:lin-lsq}

Assume that $\Theta,\Lambda$ are isometrically embedded in euclidean
spaces $\E_0,\E_1$ with inclusion maps $\iota,\j$ respectively. 

\begin{definition} \label{de:lin-map}
Let $\Gamma \subset C^{\infty}(\Theta,\Lambda)$. One says that
$\Gamma$ is a set of {\em linear maps} if there is a subset $\Delta
\subset \Hom{\E_0,\E_1}$ that map $\Theta$ into $\Lambda$ such that
$\Delta \iota = \j \Gamma$.
\end{definition}

In other words, $\Gamma$ is a set of linear maps if, for each $\alpha
\in \Gamma$, there is an $A \in \Delta$ such that the following
commutes:
\begin{equation} \label{eq:cd-2}
\xymatrix{
\E_0 \ar[r]^{A} & \E_1\\
\Theta \ar[r]^{\alpha} \ar[u]^{\iota} & \Lambda \ar[u]_{\j}.
}
\end{equation}

Since $\j$ is a smooth embedding (resp. $\iota$ is a smooth immersion
when $\Theta$ spans $\E_0$), it is permissible to abuse notation and
identify $\Gamma$ and $\Delta$ as smooth manifolds.

\begin{corollary} \label{cor:reg-foc}
Let $\Theta \subset \E_0$ be a spanning set and let $\Gamma$ be a
submanifold of linear maps. If $\gamma$ is a least-squares solution to
\eqref{eq:reg-soln}, then the matrices
\begin{align} \label{al:reg-lsq-nt}
\nu &=\ds\sum_{i=1}^k y_i \otimes \theta_i', & \tau&=\ds\sum_{i=1}^k
\theta_i \otimes \theta_i'\\
\intertext{ satisfy }
\nu &\equiv \gamma \tau && \bmod \nb[\gamma]{\Gamma},  \label{al:reg-lsq}
\end{align}
where $T_\gamma\Hom{\E_0,\E_1} = T_\gamma \Gamma \oplus \nb[\gamma]{\Gamma}$ as
in \eqref{eq:pix}, and $\nu \in \Hom{\E_0,\E_1}, \tau \in \Hom{\E_0,\E_0}$.
\end{corollary}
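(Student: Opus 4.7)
The plan is to specialize the abstract first-order condition of Proposition~\ref{pr:reg-foc} to the linear setting, using the explicit parametrisation $\Gamma \simeq \Delta \subset \Hom{\E_0,\E_1}$. Because $\Theta$ spans $\E_0$, a linear map $A : \E_0 \to \E_1$ is determined by its restriction to $\Theta$, so the identification $\gamma \leftrightarrow A$ is unambiguous; under it, $\j \circ \ev_i : \Gamma \to \E_1$ corresponds to the linear evaluation functional $A \mapsto A\theta_i$. Taking derivatives is then trivial: for any $v \in T_\gamma \Hom{\E_0,\E_1} = \Hom{\E_0,\E_1}$ one has
\begin{equation}
\d_{\lambda_i}\j \cdot \d_\gamma\ev_i \cdot v = v\,\theta_i.
\end{equation}

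Next I would compute the adjoint of this linear map with respect to the trace inner product $\langle x,y\rangle = \trace{x'y}$ on $\Hom{\E_0,\E_1}$ and the inner product $\sigma$ on $\E_1$. For $w\in \E_1$ and $v \in \Hom{\E_0,\E_1}$,
\begin{equation}
\langle w, v\theta_i\rangle_{\E_1} = w' v \theta_i = \trace{(w\otimes \theta_i')' v} = \langle w\otimes\theta_i', v\rangle_{\Hom{\E_0,\E_1}},
\end{equation}
so $(\d_{\lambda_i}\j \cdot \d_\gamma\ev_i)^* \cdot w = w\otimes\theta_i'$. Substituting $w = \j(y_i)-\j\ev_i(\gamma) = y_i - \gamma\theta_i$ into the first-order condition~\eqref{al:reg-foc} gives
\begin{equation}
\ds\sum_{i=1}^k (y_i - \gamma\theta_i)\otimes\theta_i' \in \nb[\gamma]{\Gamma},
\end{equation}
where the normal space is taken inside $\Hom{\E_0,\E_1}$ relative to the trace metric.

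Finally I would rewrite the sum using the matrices $\nu,\tau$. Since tensor product and composition satisfy $(\gamma\theta_i)\otimes\theta_i' = \gamma\cdot(\theta_i\otimes\theta_i')$, one has
\begin{equation}
\ds\sum_{i=1}^k \gamma\theta_i \otimes\theta_i' = \gamma\,\ds\sum_{i=1}^k \theta_i\otimes\theta_i' = \gamma\tau,
\end{equation}
while $\sum_i y_i\otimes\theta_i' = \nu$ by definition. Hence $\nu - \gamma\tau \in \nb[\gamma]{\Gamma}$, which is exactly~\eqref{al:reg-lsq}.

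The only subtle point, and the place to spend care, is the identification $\Gamma \simeq \Delta$ and the computation of tangent/normal spaces under this identification: the spanning hypothesis on $\Theta$ guarantees that distinct linear maps restrict to distinct elements of $\Gamma$, so $T_\gamma \Gamma$ may legitimately be viewed as a subspace of $\Hom{\E_0,\E_1}$ with the trace metric, and the adjoint computation above is the correct intrinsic one. Everything else is a bookkeeping exercise converting the abstract adjoint in Proposition~\ref{pr:reg-foc} into rank-one matrices of the form $w\otimes\theta_i'$.
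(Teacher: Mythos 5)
Your proposal is correct and follows essentially the same route as the paper: both specialize Proposition \ref{pr:reg-foc} using the linearity of the evaluation maps, so that $\d_{\lambda_i}\j\cdot\d_\gamma\ev_i\cdot v = v\,\theta_i$, and both use the trace-inner-product identity $\langle w , v\theta_i\rangle = \langle w\otimes\theta_i' , v\rangle$ (your explicit adjoint computation) to rewrite the condition as $\nu-\gamma\tau \perp T_\gamma\Gamma$. The paper merely performs the adjoint step implicitly inside the inner product rather than stating $(\d_{\lambda_i}\j\cdot\d_\gamma\ev_i)^*w = w\otimes\theta_i'$ separately.
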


The condition \eqref{al:reg-lsq} specialises to the least-squares
regression formula when $\Lambda=\E^1$, $\Theta=\E^s$ and $\Gamma$ is
the space of linear functions $\Hom{\E^s,\E^1}$ so that the normal
space is trivial. In the usual least-squares regression formula, the
coefficient vector is viewed as a column vector, whereas here one
views the coefficient vector as a linear function and hence a row
vector. The standard formula is recovered by tranposing the normal
equations \eqref{al:reg-lsq}.

An especially useful application of corollary \ref{cor:reg-foc} is
when $\Theta=\Lambda$ and $\Gamma \subset \Hom{\E}$ is a group. In
this case the first-order condition simplifies to
\begin{align}
\nu \gamma' &\equiv \gamma \tau \gamma ' &&\bmod \nb[1]{\Gamma},  \label{al:reg-lsq-gp}
\intertext{and when $\Gamma \subset \Orth{\E}$, since $\tau$ is symmetric,}
\gamma'\nu &\equiv 0 &&\bmod \nb[1]{\Gamma}. \label{al:reg-lsq-on}
\end{align}
In other words, $\gamma$ is the orthogonal projection onto $\Gamma$ of
the matrix $\nu$.

\begin{remark} \label{re:reg-lsq}
{\rm

Kim \cite{Kim:1991} looks at the spherical regression problem where
one has $n$ known design points $x_i$ on $S^2$ and there are $n$
observations $y_i$ on $S^2$ which are distributed about $\alpha x_i$
where $\alpha \in \SO{\E^3}$ is unknown. For a uniform bayesian prior
on $S^2$ and the discrepancy function $\ssqerr{a} = \frac{1}{n}
\ds\sum_{i=1}^{n} |y_i - a x_i|^2$, Kim shows that the bayesian
estimator is the ``least-squares'' estimator obtained as follows. Let
\begin{align} 
x &= \frac{1}{n} YX' && \text{ where }Y=[y_1 \cdots y_n], X=[x_1 \cdots x_n]
\notag\\
&= u \sigma v' && u,v \in \SO{\E^3},
\sigma=\diag{\sigma_1,\sigma_2,\sigma_3}\label{al:stf-svd}\\
&&&\text{ with } \sigma_1 \geq \sigma_2 \geq \sigma_3.\notag
\shortintertext{The least squares estimator is then}
\hat{\alpha} &= uv'. \label{al:stf-a}
\end{align} 
If one observes that the singular-value-like decomposition of $x$ in
\eqref{al:stf-svd} can be rewritten to obtain a polar-like
decomposition of $x$,
\begin{align} \label{al:stf-b}
x &= uv'(v \sigma v') = gp && g=uv' \in\SO{\E^3}, p= v \sigma v' \in
\sym{\E^3} \notag
\shortintertext{ whence }
\hat{\alpha} &= \pi(x) && \text{ provided } \sigma_3 > 0.
\end{align}
One can see that \eqref{al:reg-lsq-on} generalises
Kim's~\cite{Kim:1991} formula for the spherical regression problem.

}
\end{remark}

\begin{proof}[Proof of Corollary \ref{cor:reg-foc}]
Since $\Gamma$ acts linearly on $\E_0$, one sees that the normal
equations \eqref{al:reg-foc} are linear in $\gamma$ and simplify to: for all
$v \in T_{\gamma}\Gamma$
\begin{align}
0 &= \sum_{i=1}^k \left\langle y_i - \gamma \cdot \theta_i , v\cdot
\theta_i \right\rangle
= \sum_{i=1}^k \left\langle y_i \otimes \theta_i' - \gamma \cdot
\theta_i \otimes \theta_i' , v \right\rangle \label{al:reg-dev}
\end{align}
where the second inner product is the trace inner product as in
\eqref{eq:eu} and the inclusion map $\j$ is dropped to simplify
notation. Rearranging \eqref{al:reg-dev} yields \eqref{al:reg-lsq}.

To arrive at \eqref{al:reg-lsq-on}, one notes that when $\Gamma
\subset \Hom{\E,\E}$ is a group, then each tangent vector $v \in
T_\gamma \Gamma$ is of the form $v=\xi \cdot \gamma$ where $\xi \in
T_1 \Gamma$. The normal equations \eqref{al:reg-dev} are then
transformed to
\begin{align}
0 &= \sum_{i=1}^k \left\langle y_i \otimes \theta_i' \cdot \gamma' - \gamma \cdot
\theta_i \otimes \theta_i' \cdot \gamma' , \xi \right\rangle \label{al:reg-dev-gp}
\end{align}
for all $\xi \in T_1 \Gamma$.  Rearranging \eqref{al:reg-dev-gp}
yields \eqref{al:reg-lsq-gp}.
\end{proof}

\subsubsection{Regression with the intrinsic
  distance} \label{sssec:reg-id}

Let $\dist : \Lambda \times \Lambda \to \R$ be the riemannian distance
of the riemannian manifold $(\Lambda,\metric{h})$. One can define the
discrepancy functional \eqref{eq:reg-sq-err} using $\dist$ to be
\begin{equation} \label{eq:reg-id-sq-err}
\ssqerr{\gamma} = \frac{1}{k} \sum_{l=1}^k \dist(y_l,\gamma(\theta_l))^2,
\end{equation}
for $\gamma \in C^{\infty}(\Theta,\Lambda)$.

For each $y \in \Lambda$, the function $x \mapsto \dist(y,x)^2$ is
smooth on the open set of $x$ such that there is a unique minimising
geodesic from $y$ to $x$. The set of $x$ on which this function is not
differentiable is the cut locus of $y$ --- a closed, nowhere dense
subset of $\Lambda$. If $x$ is not in the cut locus of $y$, then there
exists a unique shortest tangent vector $w=w_y(x) \in T_x \Lambda$
such that $\exp_x w = y$ and $|w| = \dist(y,x)$. One may write $w =
\log_x y$; one knows that $w=w_y(x)$ is a smooth vector field off the
cut locus of $y$.

\begin{proposition} \label{pr:d-dist}
For $x$ in the complement of the cut locus of $y$,
\begin{equation} \label{eq:d-dist}
\didi{x}{\dist(y,x)^2} = -2\, \log_x y,
\end{equation}
where $T_x^* \Lambda$ and $T_x \Lambda$ are identified via the metric
$\metric{h}$.
\end{proposition}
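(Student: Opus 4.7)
The plan is to rewrite $\dist(y,x)^2$ as the energy of a minimizing geodesic from $x$ to $y$ and then apply the first variation formula. Fix a tangent vector $v \in T_x\Lambda$ and choose an arbitrary smooth curve $\gamma(t)$ in $\Lambda$ with $\gamma(0)=x$ and $\dot\gamma(0)=v$. Since the complement of the cut locus of $y$ is open and contains $x$, it also contains $\gamma(t)$ for all sufficiently small $t$, so $w(t) := \log_{\gamma(t)} y$ is a smooth section of $\gamma^* T\Lambda$ near $t=0$, with $w(0)=\log_x y$.

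Next I would introduce the associated geodesic variation $\Psi(t,s) := \exp_{\gamma(t)}(s\, w(t))$ for $s\in[0,1]$. By construction, for each fixed $t$ the curve $c_t(s):=\Psi(t,s)$ is the unique minimizing geodesic from $\gamma(t)$ to $y$, it has constant speed $|w(t)|$, and consequently
\begin{equation*}
\dist(y,\gamma(t))^2 = |w(t)|^2 = 2\, E(c_t), \qquad E(c_t) := \tfrac{1}{2}\int_0^1 |\partial_s \Psi(t,s)|^2\,\d s.
\end{equation*}
The transverse variation field $V(s):=\partial_t \Psi(t,s)|_{t=0}$ satisfies $V(0)=v$ and $V(1)=0$, the latter because $\Psi(t,1)=y$ is independent of $t$.

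I would then invoke the first variation of energy: for any fixed-endpoint variation of curves one has $\tfrac{\d}{\d t}E(c_t) = \langle V,\partial_s\Psi\rangle\big|_0^1 - \int_0^1 \langle V,\nabla_s\partial_s\Psi\rangle\,\d s$. For geodesics the integrand vanishes, and with $V(1)=0$ only the boundary term at $s=0$ survives, giving
\begin{equation*}
\frac{\d}{\d t}\bigg|_{t=0} E(c_t) = -\langle v,\, \partial_s\Psi(0,0)\rangle = -\langle v,\, \log_x y\rangle.
\end{equation*}
Combined with $\dist(y,\gamma(t))^2 = 2E(c_t)$, this yields $\d_x\dist(y,\cdot)^2 (v) = -2\langle \log_x y,\, v\rangle$, which under the metric identification $T_x^*\Lambda \simeq T_x\Lambda$ is exactly \eqref{eq:d-dist}.

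The only subtle point — and the reason the cut-locus hypothesis is needed — is the smoothness of $t\mapsto w(t)=\log_{\gamma(t)}y$, which ensures that $\Psi$ is a bona fide smooth variation so that the first variation formula applies. Once that regularity is established, the rest is a routine application of the Gauss lemma / first variation identity, so I do not expect any serious obstacle.
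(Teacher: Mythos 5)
Your proof is correct and follows essentially the same route as the paper: differentiate along a curve through $x$, use the family of minimizing geodesics to the fixed endpoint $y$, and read off the derivative of the squared distance from the first variation, with the sign giving $-\log_x y$. The only difference is presentational — you make explicit the energy identification $\dist(y,\gamma(t))^2 = 2E(c_t)$ and the variation $\Psi(t,s)=\exp_{\gamma(t)}(s\,\log_{\gamma(t)}y)$ with $V(1)=0$, where the paper appeals to its figure and to reversibility of the geodesic from $y$ to $x$.
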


\begin{proof}
Let $x_t$ be a smooth curve such that $x_{t=0} = x$ and $v = \ds
\left.\ddt{t}\right|_{t=0} x_t$. Let $c_t(s)$ be the unique minimal
geodesic from $x_t$ to $y$. It is clear from figure \ref{fig:jacobi}
that the derivative of $\frac{1}{2}\,\dist(y,x_t)^2$ is $\left\langle
c'(1) , v \right\rangle$ where $c=c_0$. Since there is a unique
shortest geodesic joining $y$ to $x$, reversibility shows that $c'(1)
= -w_y(x) = - \log_x y$.
\end{proof}

\begin{center}
\begin{figure}[htb]
{
\psfrag{a}{$y$}
\psfrag{aa}{$\alpha$}
\psfrag{b}{$x$}
\psfrag{cd}{$c'(1)$}
\psfrag{v}{$v$}
\psfrag{d}{$x_t$}
\psfrag{u}{\hspace{-4mm}\vspace{1mm}geodesic $c(s)$}
\psfrag{w}{\hspace{-23mm}geodesic at time $t$:\ \ \ \ $c_t(s)$}
\includegraphics[width=5cm, height=4cm]{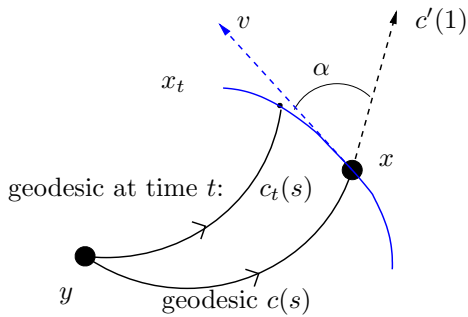}
}
\caption{The derivative of $\dist(y,x)^2$.} \label{fig:jacobi}
\end{figure}
\end{center}

\begin{proposition} \label{pr:reg-id-foc}
If $\gamma \in \Gamma$ is a minimiser of $\ssqerr{}$
\eqref{eq:reg-id-sq-err}, then {\em either} $\ssqerr{}$ is
differentiable at $\gamma$ and 
\begin{equation} \label{eq:reg-id-foc}
\d_\gamma \ssqerr{} = \left. -\frac{2}{k}\, \sum_{l=1}^k \left( \d_\gamma
\ev_l  \right)^*\, \log_{\gamma(\theta_l)} y_l \right|_{T_\gamma
  \Gamma} = 0;
\end{equation}
{\em or} there is an $l$ such that $\gamma(\theta_l)$ lies in the cut
locus of $y_l$.
\end{proposition}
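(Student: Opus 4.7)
The plan is to argue by dichotomy. If there exists an index $l \in \{1,\ldots,k\}$ for which $\gamma(\theta_l)$ lies in the cut locus of $y_l$, then the second alternative of the proposition holds and there is nothing more to prove. So assume throughout the remainder that $\gamma(\theta_l)$ lies in the open complement of the cut locus of $y_l$ for every $l$; under this hypothesis I will show that $\ssqerr{}$ is smooth near $\gamma$, compute its differential by chain rule, and invoke the first-order condition.

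Write $\phi_l(x) := \dist(y_l,x)^2$. By Proposition \ref{pr:d-dist}, each $\phi_l$ is smooth on an open neighbourhood $U_l \subset \Lambda$ of $\gamma(\theta_l)$ and satisfies $\d_x \phi_l = -2\log_x y_l$, where $\log_x y_l \in T_x \Lambda$ is identified with an element of $T_x^* \Lambda$ via $\metric{h}$. Since $\ev_l : \Gamma \to \Lambda$ is smooth, the set $\bigcap_{l=1}^k \ev_l^{-1}(U_l)$ is an open neighbourhood of $\gamma$ in $\Gamma$, and on this neighbourhood $\ssqerr{} = \frac{1}{k}\sum_{l=1}^k \phi_l \circ \ev_l$ is smooth. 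Applying the chain rule at $\gamma$ to each summand gives
$$\d_\gamma(\phi_l \circ \ev_l) = (\d_\gamma \ev_l)^* \,\d_{\gamma(\theta_l)}\phi_l = -2\,(\d_\gamma \ev_l)^* \log_{\gamma(\theta_l)} y_l,$$
and averaging yields the displayed formula for $\d_\gamma \ssqerr{}$. Because $\Gamma$ is finite-dimensional and $\gamma$ is an interior minimiser of a function that is smooth on an open neighbourhood of $\gamma$ in $\Gamma$, the first-order condition $\d_\gamma \ssqerr{}|_{T_\gamma \Gamma}=0$ is immediate.

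The one point that needs care is the smoothness of $\ev_l : \Gamma \to \Lambda$. This is needed to convert a pointwise evaluation, defined a priori on the Fr\'echet manifold $C^\infty(\Theta,\Lambda)$, into a genuine smooth map on the finite-dimensional submanifold $\Gamma$; it follows from standard smoothness of the joint evaluation $\Theta \times C^\infty(\Theta,\Lambda) \to \Lambda$ together with the assumed smooth-submanifold structure on $\Gamma$, so it is essentially a bookkeeping step rather than a genuine obstacle. The substantive input is Proposition \ref{pr:d-dist}, which controls the derivative of the squared distance off the cut locus and forces the dichotomy to take exactly the shape stated; once smoothness is in hand, the chain rule and the standard first-order condition on a finite-dimensional manifold conclude the proof.
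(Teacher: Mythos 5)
Your argument is correct and is exactly the route the paper intends: the proposition is left without an explicit proof there precisely because it follows from Proposition \ref{pr:d-dist} (smoothness of $\dist(y_l,\cdot)^2$ off the cut locus and $\d_x\dist(y_l,x)^2=-2\log_x y_l$) combined with the chain rule through the smooth evaluation maps $\ev_l$, as in the proof of Proposition \ref{pr:reg-foc}. Your dichotomy, the chain-rule computation, and the interior first-order condition on the compact finite-dimensional $\Gamma$ reproduce that argument faithfully.
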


In the first case where $\ssqerr{}$ is defined via the extrinsic
distance \eqref{eq:reg-sq-err}, proposition \ref{pr:reg-foc} results
in a closed form solution for the minimising estimator in many
interesting cases. The intrinsic distance leads to a system of normal
equations which, even in simple cases, appear opaque. However, there
is additional information which one may obtain from these
equations. In the first case, since $\Gamma$ is a finite-dimensional
manifold, let us equip it with some riemannian metric. It is
well-known that the hessian of a smooth function may be defined using
riemannian structures, but that this hessian at a critical point is
independent of those structures. Thus, if one lets $\phi_l(x) =
\dist(y_l,x)^2$, then the calculus of second derivatives gives
\begin{equation} \label{eq:reg-id-hessian}
\left.\nabla \d \ssqerr{}\right|_{\gamma} = \frac{1}{k}\, \sum_{l=1}^k \nabla \d
\phi_l(\d_{\gamma}\ev_l,\d_{\gamma}\ev_l) + \d_{\gamma(\theta_l)} \phi_l \cdot \nabla \d\,\ev_l
\end{equation}
where $\nabla \d \phi_l$ is the hessian of $\phi_l$, etc.. One knows
that $\nabla \d \phi_l(v,w)$ is the second variation of the energy
functional $E[c] = \int_0^1 |c'(s)|^2\, \d s$ along the Jacobi fields
determined by $v,w \in T_{\gamma(\theta_l)} \Lambda$ and the
minimising geodesic $c$ from $y_l$ to $\gamma(\theta_l)$. If it is
assumed that these do not lie in the cut locus of the other, then this
second variation is necessarily positive. Thus, the only way for
$\nabla \d \ssqerr{}$ to not be positive definite is for one or more
of the forms $\d \phi_l \cdot \nabla \d\, \ev_l$ to be negative definite
along some subspace. This cannot happen if $\nabla \d\, \ev_l$ vanishes
for all $l$.

\begin{proposition} \label{pr:reg-id-soc}
If there is a riemannian structure on $\Gamma$ such that $\nabla \d\,
\ev_l = 0$ for all $l$, and $\gamma \in \Gamma$ is a smooth critical
point of $\ssqerr{}$, then $\gamma$ is a local minimum.
\end{proposition}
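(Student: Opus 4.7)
The plan is to derive the conclusion directly from the Hessian computation \eqref{eq:reg-id-hessian} in the paragraph preceding the proposition, together with the vanishing hypothesis $\nabla \d \ev_l = 0$.

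First, I would observe that since $\gamma$ is a \emph{smooth} critical point, for each $l$ the point $\gamma(\theta_l)$ must lie off the cut locus of $y_l$: otherwise $\phi_l := \dist(y_l, \cdot)^2$ would fail to be smooth at $\gamma(\theta_l)$, preventing $\ssqerr{} = \tfrac{1}{k} \sum_l \phi_l \circ \ev_l$ from being smooth at $\gamma$. Hence each $\phi_l$ is smooth on a neighbourhood of $\gamma(\theta_l)$, we are in the first alternative of Proposition \ref{pr:reg-id-foc}, and $\d_\gamma \ssqerr{} = 0$.

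Second, I would fix any auxiliary riemannian metric on $\Gamma$ in order to define the Hessian; at a critical point $\nabla \d \ssqerr{}|_\gamma$ is independent of this choice. The chain-rule expansion \eqref{eq:reg-id-hessian} splits the Hessian into two families of terms, and the hypothesis $\nabla \d \ev_l = 0$ for all $l$ annihilates the second family, leaving
\begin{equation*}
\nabla \d \ssqerr{}|_\gamma (v,v) \;=\; \frac{1}{k}\sum_{l=1}^k \nabla \d \phi_l\bigl(\d_\gamma \ev_l \cdot v,\ \d_\gamma \ev_l \cdot v\bigr), \qquad v \in T_\gamma \Gamma.
\end{equation*}

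The remaining step, where I expect the main geometric work to lie, is the positivity of each $\nabla \d \phi_l$ at $\gamma(\theta_l)$. Because $\gamma(\theta_l)$ is off the cut locus of $y_l$, the unique minimising geodesic $c_l$ from $y_l$ to $\gamma(\theta_l)$ has no interior conjugate points; I would then invoke the identification (stated in the text above the proposition) of $\nabla \d \phi_l(w,w)$ with the second variation of energy along $c_l$ evaluated on the Jacobi field terminating at $w$, together with the classical positivity of that variation in the absence of conjugate points, to obtain $\nabla \d \phi_l \geq 0$ at $\gamma(\theta_l)$. Summing over $l$ produces a positive semi-definite Hessian at $\gamma$; combined with $\d_\gamma \ssqerr{} = 0$ and the fact that common-kernel directions of the $\d_\gamma \ev_l$'s correspond to geodesics in $\Gamma$ along which every $\ev_l$ (being totally geodesic by $\nabla \d \ev_l = 0$) and therefore $\ssqerr{}$ itself is constant, Taylor's theorem on $\Gamma$ concludes that $\gamma$ is a local minimum. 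The principal obstacle is thus the Jacobi-field positivity step, and it is precisely where the off-the-cut-locus hypothesis does its geometric work.
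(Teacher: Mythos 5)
Your argument is essentially the paper's own proof, which is carried out in the paragraph preceding the proposition: equip $\Gamma$ with an auxiliary metric, use the decomposition \eqref{eq:reg-id-hessian} of $\nabla \d\, \ssqerr{}$, note that the hypothesis $\nabla \d\, \ev_l = 0$ kills the terms $\d\phi_l \cdot \nabla \d\, \ev_l$, and invoke the positivity of $\nabla \d \phi_l$ as the second variation of energy along the minimising geodesic off the cut locus. Your two refinements --- deducing the off-cut-locus condition from smoothness of $\ssqerr{}$ at the critical point, and treating directions in the common kernel of the $\d_\gamma \ev_l$ via the totally geodesic property of the $\ev_l$ --- are slightly more careful than what the paper records, but the route and the key positivity step are the same.
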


\begin{example} \label{ex:reg-id}
{\rm

Let $\Phi=\Lambda$ be the unit sphere $S^2$ in $\E^3$ and let
$\Gamma=\SO{3}$ be the group of orientation-preserving isometries of
$S^2$. In this case, the distance function is the angle between
vectors
\begin{align} 
\dist(y,x) &= \arccos(\alpha)
 \label{al:reg-id-s2-dist} \\
\intertext{while the inverse to the exponential function is}
\log_xy &= \frac{\alpha}{\sin \alpha} \, (x \wedge y) \wedge x && \text{where}
&\cos \alpha = \left\langle y , x \right\rangle, 
\label{al:reg-id-s2-log}\\
\end{align}
and $y \neq -x$ in \eqref{al:reg-id-s2-log}. One computes that for
each $\gamma \in \SO{3}$ and $\xi \in \so{3}$
\begin{align}
\left.\ddt{t}  \right|_{t=0}\, \ssqerr{\gamma e^{t\xi}} &= -\frac{2}{k} \sum_{l=1}^k
\trace{ \log_{\theta_l}(\gamma' y_l) \otimes \theta_l' \cdot \xi
} \notag\\
\intertext{so the pullback of $\d_\gamma \ssqerr{}$ to $T_1\SO{3}$ is}
2\tau &= \left.\frac{2}{k} \sum_{l=1}^k \log_{\theta_l}(\gamma' y_l) \otimes
\theta_l' \right|_{\so{3}}. \notag
\end{align}
Thus, $\d_\gamma \ssqerr{}$ vanishes iff $\tau$ is a symmetric
matrix. But modulo $\nb[1]{\SO{3}}=\sym{\E^3}$ one has
\begin{align}
\tau &= \gamma' \nu, 
&&
\text{ where } \nu = \frac{1}{k} \sum_{l=1}^k \frac{\alpha_l}{\sin \alpha_l} y_l
\otimes \theta_l' \label{al:reg-id-s2-foc}
\end{align}
so one concludes that the first-order condition is that $\gamma$ is
the orthogonal projection of $\nu$ onto $\SO{3}$. This is similar to
the least-squares condition \eqref{al:reg-lsq} --- except that in
\eqref{al:reg-id-s2-foc} the matrix $\nu$ is a function of $\gamma$
through the angles $\alpha_l$. However, if one writes $y_l = \gamma'
\theta_l + \epsilon r_l$, and expands the matrix $\nu$ in the small
parameter $\epsilon$, then one has $\nu = \epsilon \nu_1 +
O(\epsilon^2)$ and $\nu_1$ formally is the same as $\nu$ in
\eqref{al:reg-lsq-nt}. In other words, the intrinsic-distance
regressor is a perturbation of the least-squares regressor.

In this example, $\ev_l(\gamma) = \gamma(\theta_l)$ so the map $\ev_l
: \SO{3} \to S^2$ is the canonical projection map. In particular, this
map has vanishing hessian -- $\nabla \d\,\ev_l = 0$ -- so proposition
\ref{pr:reg-id-soc} implies that a smooth solution to the first-order
condition $\tau \equiv 0 \bmod \sym{\E^3}$ is a local minimiser of
$\ssqerr{}$. Moreover, one knows

{
\def\nonsmooth{{\mathfrak N}}
\begin{lemma} \label{le:reg-id-s2-smooth}
Let $\nonsmooth$ be the set of $\gamma$ at which $\ssqerr{}$ is not
differentiable. Then $\nonsmooth$ is a union of translates of
subgroups isomorphic to $\SO{2}$.

If $\gamma$ is a local minimum point of $\ssqerr{}$, then $\ssqerr{}$
is differentiable at $\gamma$. In particular, the regression estimator
$\gamma$ satisfies the property that 
\begin{align} \label{al:reg-id-foc-smooth}
\gamma ' \nu &\equiv 0 && \bmod{\nb[1]{\SO{3}}}
\end{align}
where $\nu$ is defined in \eqref{al:reg-id-s2-foc}.
\end{lemma}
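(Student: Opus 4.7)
The plan is to handle the two claims in sequence. For the structural description of $\nonsmooth$, I would begin by recalling that on the round sphere $S^2$ the cut locus of a point $y$ is exactly its antipode $-y$; consequently each summand $x\mapsto\dist(y_l,x)^2$ is smooth away from $-y_l$, and $\ssqerr{}$ fails to be differentiable at $\gamma$ precisely when $\gamma\theta_l=-y_l$ for some index $l$. For fixed $l$, the locus $\nonsmooth_l=\{\gamma\in\SO{3}:\gamma\theta_l=-y_l\}$ is either empty or a single left coset of $\stab{\SO{3}}{\theta_l}$, the circle subgroup of rotations fixing $\theta_l$, which is $\cong\SO{2}$. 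Setting $\nonsmooth=\bigcup_l\nonsmooth_l$ yields the first assertion.

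For the second claim I would argue by contradiction: suppose $\gamma_0\in\nonsmooth$ is a local minimiser of $\ssqerr{}$ and put $L=\{l:\gamma_0\theta_l=-y_l\}\ne\emptyset$. The strategy is to exhibit a direction of strict descent from $\gamma_0$. Consider the one-parameter family $\gamma_t=e^{t\xi}\gamma_0$ for $\xi\in\so{3}$. The summands with $l\notin L$ are smooth at $\gamma_0$ and contribute $tA_l(\xi)+O(t^2)$ with $A_l$ linear in $\xi$. For $l\in L$, I would use the identity $\dist(y_l,x)^2=(\pi-\dist(-y_l,x))^2$, valid near $-y_l$, together with the fact that $t\mapsto e^{t\xi}(-y_l)$ has initial speed $\|\xi y_l\|$ in $\E^3$; this gives the one-sided expansion
\begin{equation}
\dist(y_l,\gamma_t\theta_l)^2 \;=\; \pi^2 - 2\pi|t|\,\|\xi y_l\| + O(t^2).
\end{equation}

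Summing then produces $\ssqerr{\gamma_t}-\ssqerr{\gamma_0}=tA(\xi)-\tfrac{2\pi}{k}|t|\sum_{l\in L}\|\xi y_l\|+O(t^2)$ for a linear functional $A$ on $\so{3}$. Imposing the local-minimum inequality for $t>0$ and $t<0$ and adding the two forces $\sum_{l\in L}\|\xi y_l\|=0$ for every $\xi\in\so{3}$, whence $\xi y_l=0$ for every $\xi\in\so{3}$ and every $l\in L$. Since $L$ is nonempty and the map $\xi\mapsto\xi y_l$ from $\so{3}$ to $T_{y_l}S^2$ is surjective for any unit vector $y_l$, this is a contradiction. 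Hence $L=\emptyset$, $\ssqerr{}$ is differentiable at $\gamma_0$, and Proposition~\ref{pr:reg-id-foc} combined with the derivation preceding the lemma yields \eqref{al:reg-id-foc-smooth}.

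The hard part will be pinning down the one-sided expansion with factor $|t|$ rather than $t$: it is precisely this cone-like vertex of the squared distance at the cut locus that produces a strict order-$|t|$ decrease in every bad summand, dominating the smooth $O(t)$ contributions and ruling out a local minimum in $\nonsmooth$.
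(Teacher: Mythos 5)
Your proof is correct, and it uses the paper's two key ingredients while organising the contradiction in a cleaner way. The first part coincides with the paper's argument: $\ssqerr{}$ fails to be differentiable at $\gamma$ exactly when $\gamma\theta_l=-y_l$ for some $l$, and each such locus is a translate of the circle stabiliser of $\theta_l$, so the bad set is a union of translates of copies of $\SO{2}$. For the second part you share with the paper the degenerate-triangle identity $\dist(y,x)=\pi-\dist(-y,x)$ and the resulting first-order (cone-type) decrease of each bad summand, but you run the contradiction differently. The paper splits $\ssqerr{}=\ssqerr{}_0+\ssqerr{}_1$, asserts $\d_\gamma\ssqerr{}_1=0$ at the putative minimum, and then uses that $\ev_l:\SO{3}\to S^2$ is a riemannian submersion to exhibit a direction in which $\ssqerr{}_0$ decreases at first order while $\ssqerr{}_1$ changes only at second order. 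You instead perturb along one-parameter subgroups $e^{t\xi}\gamma_0$ and use both signs of $t$: adding the two local-minimum inequalities cancels the linear contribution $tA(\xi)$ of the smooth summands and leaves $\sum_{l\in L}\|\xi y_l\|\le 0$, hence $\xi y_l=0$ for all $\xi\in\so{3}$ and $l\in L$, which is impossible because $\xi\mapsto\xi y_l$ maps $\so{3}$ onto $T_{y_l}S^2$ for a unit vector $y_l$. This buys you a genuinely more self-contained argument: you never need the claim $\d_\gamma\ssqerr{}_1=0$ (which the paper states without justification and which in fact only follows from minimality by precisely the two-sided comparison you make explicit), nor the submersion property of $\ev_l$. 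The one step to state carefully is the one-sided expansion $\dist(y_l,e^{t\xi}(-y_l))^2=\pi^2-2\pi|t|\,\|\xi y_l\|+O(t^2)$, which you rightly flag as the crux; it follows from the global identity $\dist(y_l,x)=\pi-\dist(-y_l,x)$ on $S^2$ together with the fact that the curve $t\mapsto e^{t\xi}(-y_l)$ has initial speed $\|\xi y_l\|$, so your argument is complete. The final passage from differentiability at the local minimum to \eqref{al:reg-id-foc-smooth} via the critical-point computation preceding the lemma is exactly as in the paper.
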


\begin{proof}
Since $\dist(y,x)$ is differentiable in $x$ on the set $S^2 -\{-y\}$,
$\ssqerr{}$ is differentiable at $\gamma$ iff $\gamma(\theta_l) \neq
-y_l$ for all $l$. Thus, if $\ssqerr{}$ is not differentiable at
$\gamma$, then there is a $y_{l_0}$ such that $\gamma(\theta_{l_0}) =
-\theta_{l_0} = y_{l_0}$. If $\gamma_{l}$ is some solution to
$\gamma(\theta_l)=-y_l$, then the set of all solutions to the latter
is $\gamma_l \cdot \stab{}{\theta_l}$, which is a translate of a group
isomorphic to $\SO{2}$. Thus $\nonsmooth = \cup_{l=1}^k \gamma_l \cdot
\stab{}{\theta_l}$.

Let $\gamma$ be a local minimum point of $\ssqerr{}$. Assume that
$\gamma \in \nonsmooth$. Without loss of generality, it can be assumed
that there is an $l_0>0$ such that $\gamma(\theta_l) = -y_l$
(resp. $\gamma(\theta_l) \neq -y_l$) for $l \leq l_0$
(resp. $l>l_0$). 

Let $\ssqerr{}_0$ (resp. $\ssqerr{}_1$) be the part of $\ssqerr{}$
contributed for $l \leq l_0$ (resp. $l>l_0$). Then,
\begin{align}
\ssqerr{}_0(\gamma) &= l_0 \pi^2, && d_{\gamma}\ssqerr{}_1 = 0.
\end{align}
Moreover, if $y,x\in S^2$, then since $y,-y,x$ is a degenerate
triangle, $\dist(y,x) = \dist(y,-y)-\dist(-y,x) = \pi - \dist(-y,x)$.
Therefore, one knows that 
\begin{align}
\ssqerr{}_0(\hat\gamma) &= \sum_{l\leq l_0} (\pi - \dist(-y_l,\hat
\gamma(\theta_l)))^2 \\
&= l_0\pi^2 - 2\pi  \sum_{l\leq l_0} \dist(\gamma(\theta_l),\hat\gamma(\theta_l))
+ O(|\gamma-\hat \gamma|^2),\\
\ssqerr{}_1(\hat \gamma) &= \ssqerr{}_1(\gamma) + O(|\gamma-\hat \gamma|^2).
\end{align}
Since the orbit map $\ev_l : \SO{3} \to S^2$ is a riemannian
submersion, there are $\hat \gamma$ such that, for a fixed $l$,
$\dist(\gamma(\theta_l),\hat\gamma(\theta_l)) = \dist(\gamma,\hat
\gamma)$. This implies that $\ssqerr{}_0$ decreases along $\hat
\gamma$ more than $\ssqerr{}_1$ increases. But $\gamma$ is a local
minimum. Absurd. Therefore, if $\gamma$ is a local minimum, then
$\ssqerr{}$ is differentiable at $\gamma$.
\end{proof}

Let $R_j(s)$ be the counterclockwise rotation of $\E^3$ by $s$ radians
in the plane orthogonal to the $j$-th standard basis vector. Elements
of $\SO{3}$ may be parameterised in terms of `3-1-3' Euler angles:
$\gamma = R_3(a)R_1(b)R_3(c)$ where $a,c \in [0,2\pi]$ and $b \in
[0,\pi]$ \cite{Arnold}. In figure \ref{fig:reg-id-s2-cloud}, one has
an empirical distribution of the regressor $\hat \gamma = \hat
\gamma(\y)$ in Euler angles.  For $k=100$ design points $\theta_l$,
drawn from the uniform distribution on $S^2$, $y_l = u_l/|u_l|$ where
$u_l= \gamma \theta_l + \sigma \cdot \epsilon_l$ and $\epsilon_l$ is
an i.i.d. gaussian in $\E^3$. $N=1000$ draws are made and the
first-order condition \eqref{al:reg-id-s2-foc} is numerically solved
for $\sigma = 0.1$ to $0.9$ in increments of $0.1$. All computations
are performed in \octave\cite{octave}. The starting point for the
numerical solution of \eqref{al:reg-id-s2-foc} is provided by the
orthogonal projection of $\sum_{l=1}^k y_l \otimes \theta_l'$ onto
$\SO{3}$.

{
\def\x{{\bf x}}
Figure \ref{fig:reg-id-s2-distribution} shows the histograms of the
normalised empirical distributions of the Euler angles of the
regressor $\hat \gamma=\hat \gamma(\y)$ and reports the
Kolmogorov-Smirnov p-value for normality. The normalised Euler angles
are of the form $\xi = C^{-1} \x$, where $\x$ is the regressor's Euler
angle, and the sample covariance matrix of $\x$ is $CC'$.
}

\begin{center}
{
\psfrag{mu}{{\tiny mean}}
\psfrag{a}{{\tiny $a$}}
\psfrag{b}{{\tiny $b$}}
\psfrag{c}{{\tiny $c$}}
\psfrag{sigma=0.1}{{\tiny $\sigma=0.1$}}
\psfrag{sigma=0.2}{{\tiny $\sigma=0.2$}}
\psfrag{sigma=0.3}{{\tiny $\sigma=0.3$}}
\psfrag{sigma=0.4}{{\tiny $\sigma=0.4$}}
\psfrag{sigma=0.5}{{\tiny $\sigma=0.5$}}
\psfrag{sigma=0.6}{{\tiny $\sigma=0.6$}}
\psfrag{sigma=0.7}{{\tiny $\sigma=0.7$}}
\psfrag{sigma=0.8}{{\tiny $\sigma=0.8$}}
\psfrag{sigma=0.9}{{\tiny $\sigma=0.9$}}
\psfrag{sigma=1}{{\tiny $\sigma=1$}}
\begin{figure}[htb]
\begin{tabular}{ccc}
\includegraphics[width=4cm, height=4cm]{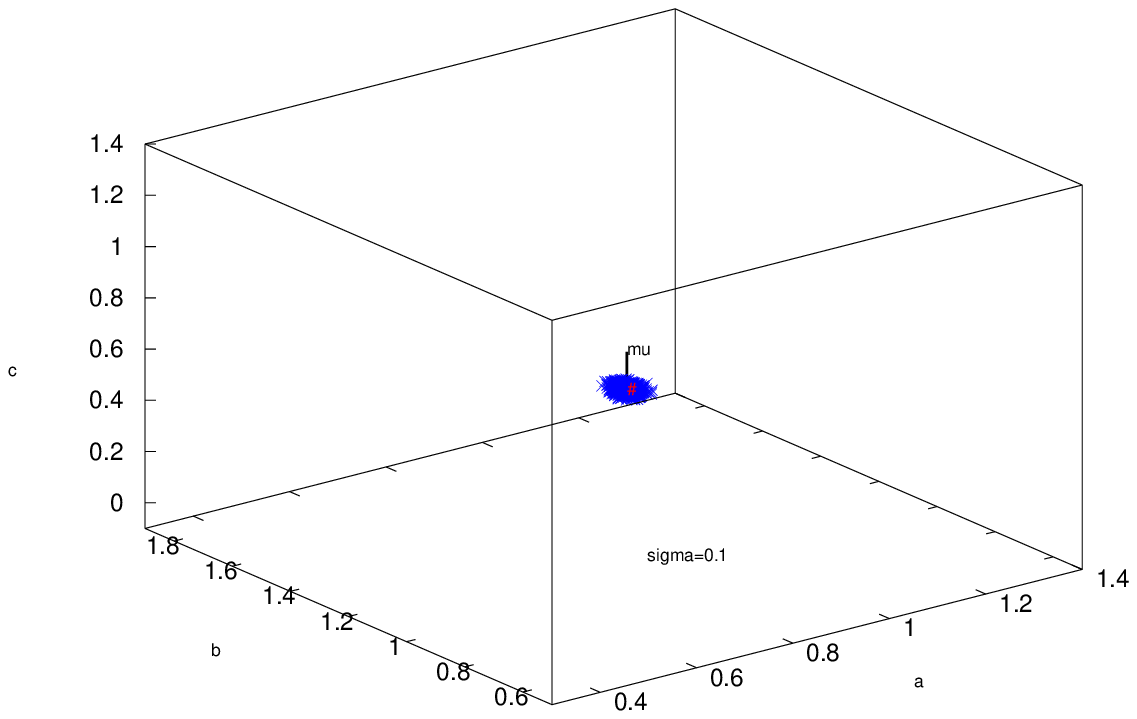} &
\includegraphics[width=4cm, height=4cm]{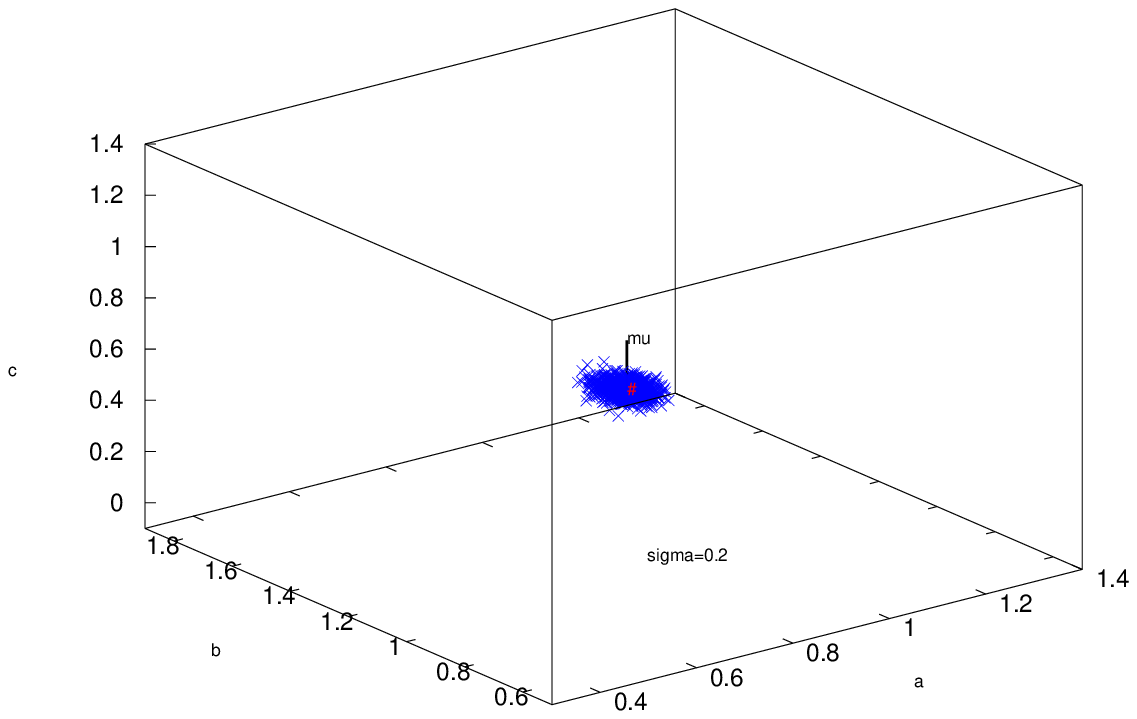} &
\includegraphics[width=4cm, height=4cm]{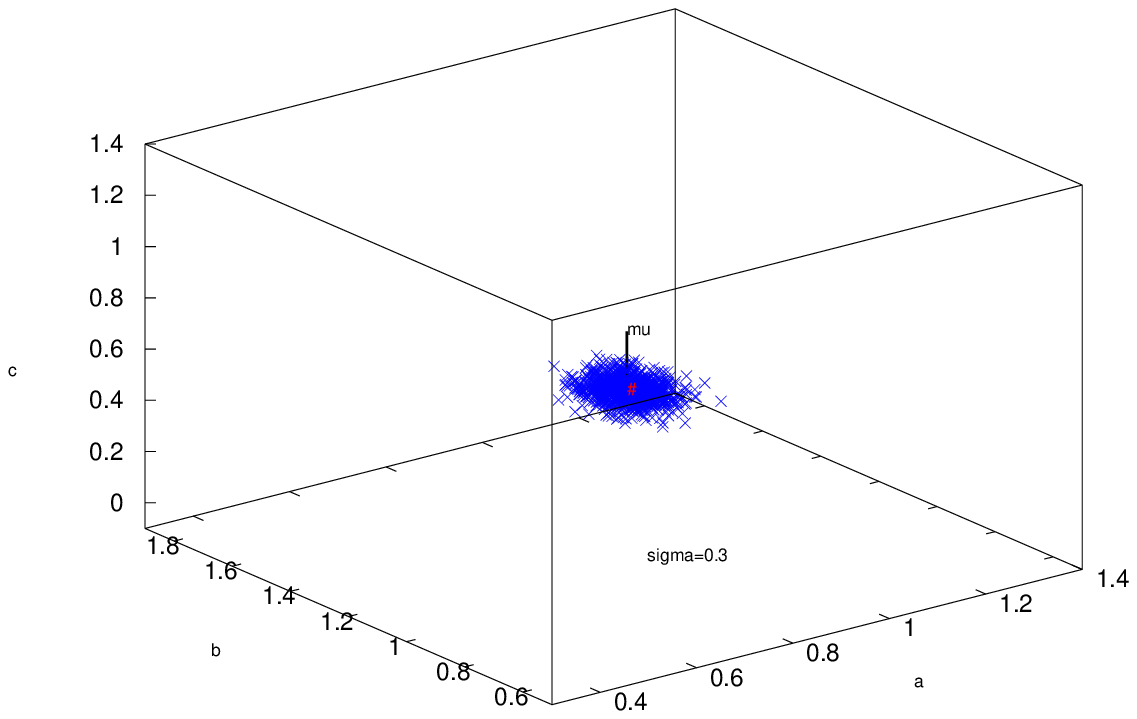} \\
\includegraphics[width=4cm, height=4cm]{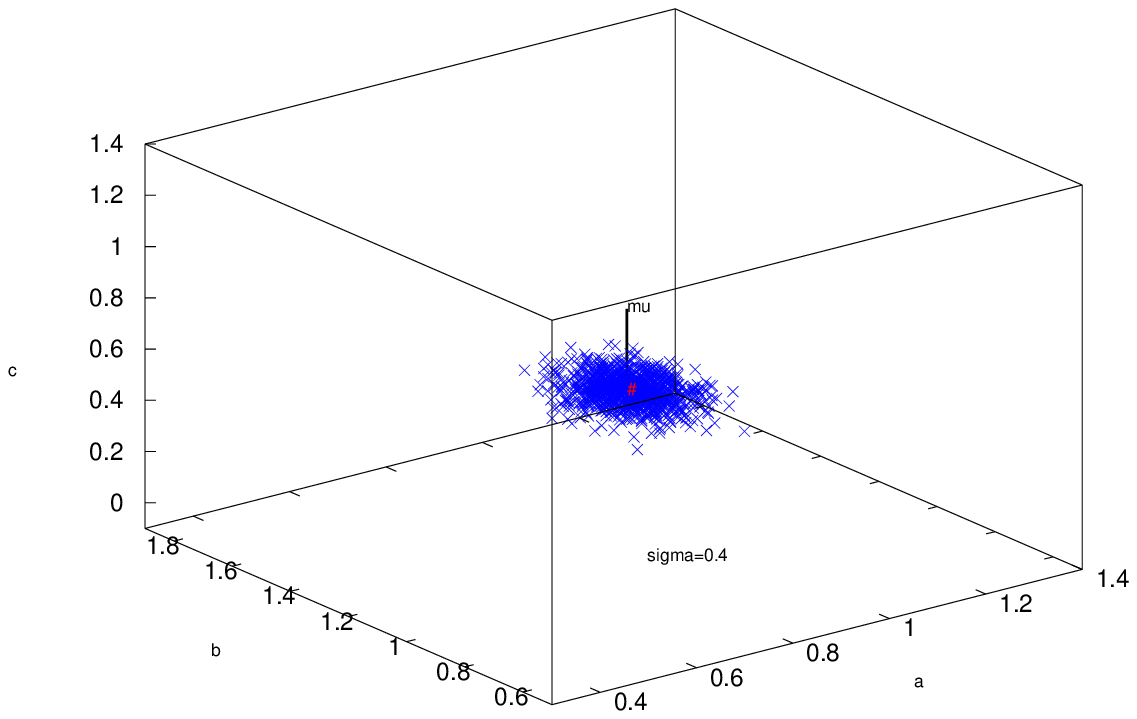} &
\includegraphics[width=4cm, height=4cm]{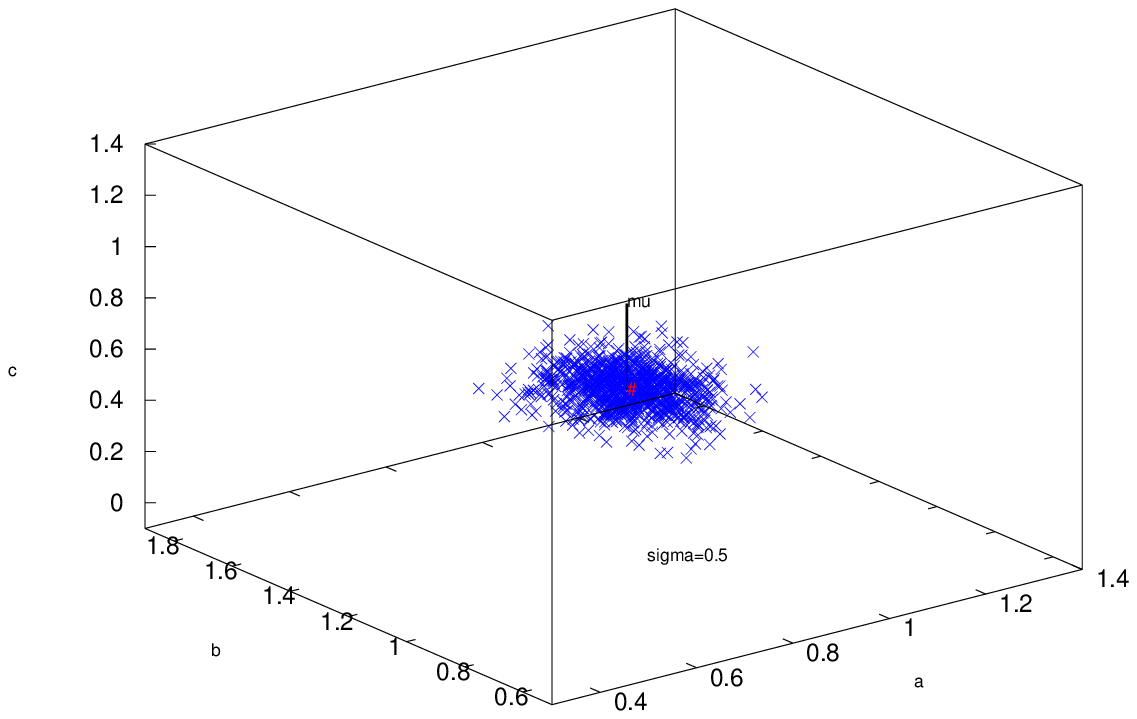} &
\includegraphics[width=4cm, height=4cm]{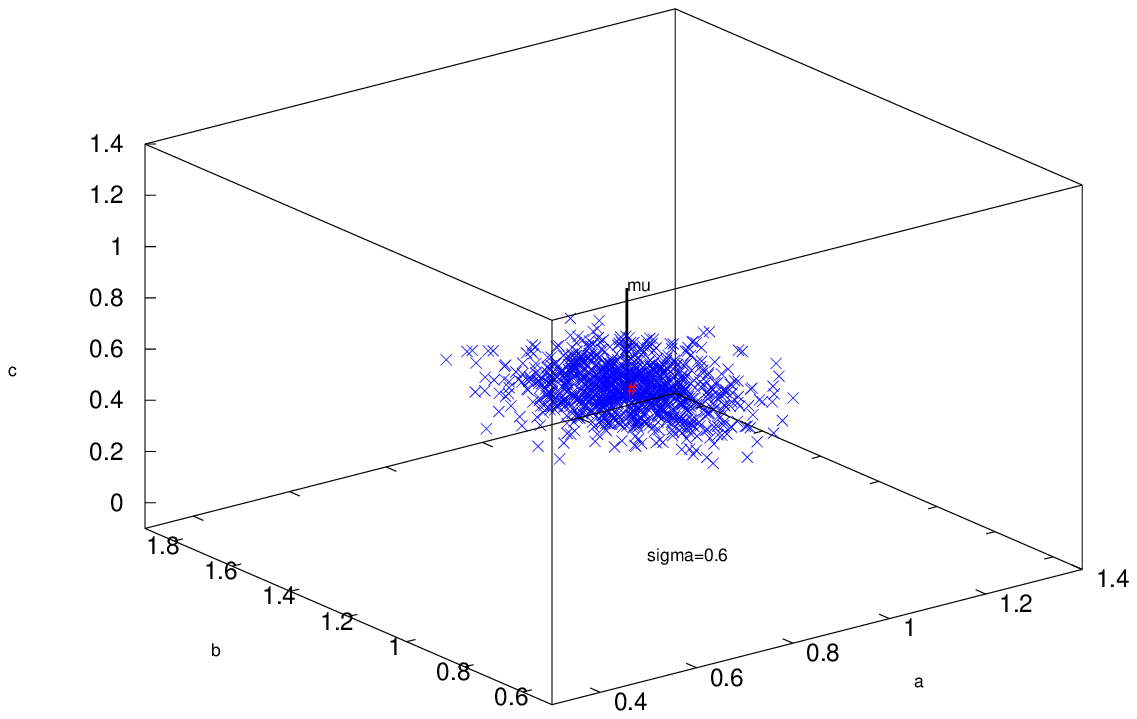} \\
\includegraphics[width=4cm, height=4cm]{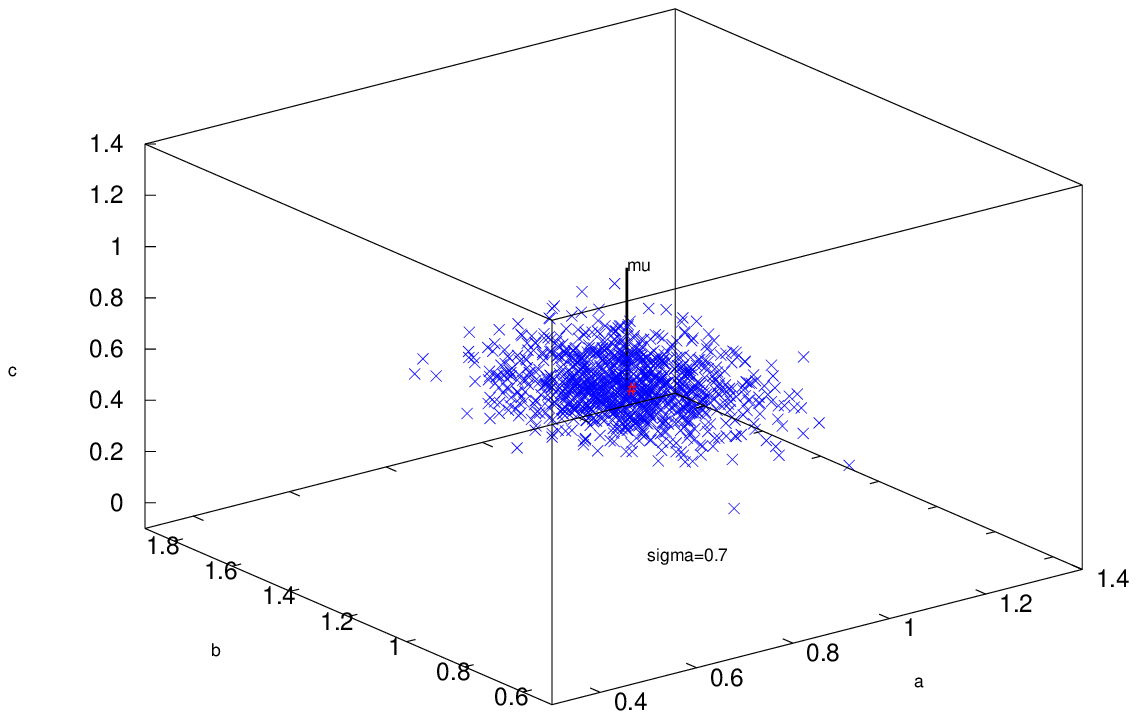} &
\includegraphics[width=4cm, height=4cm]{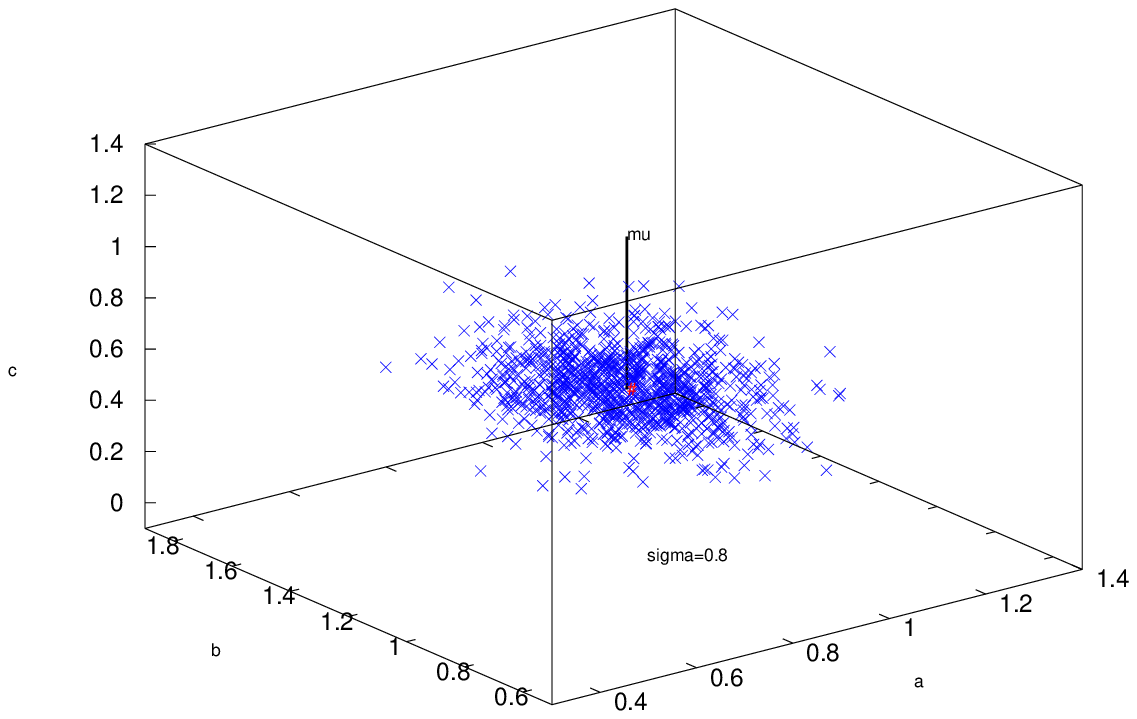} &
\includegraphics[width=4cm, height=4cm]{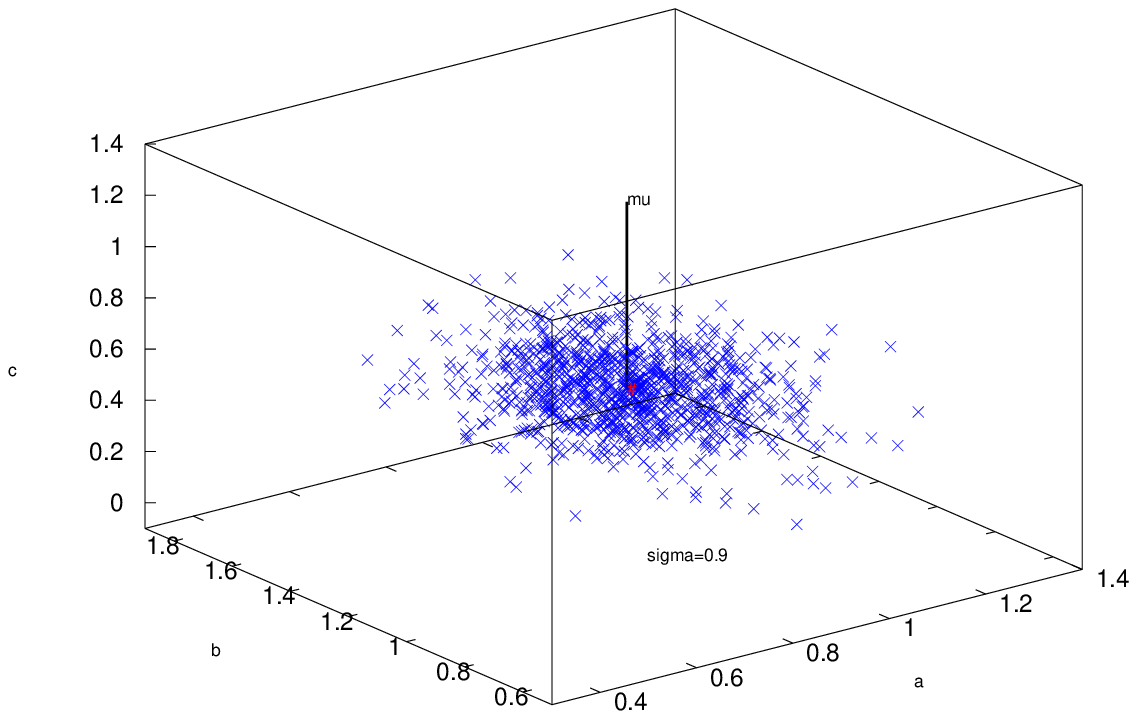}
\end{tabular}
\caption{The empirical distribution of the regressor $\hat \gamma =
  \hat \gamma(\y)$. See text for further information.} \label{fig:reg-id-s2-cloud}
\end{figure}
}
\end{center}
\begin{center}
{
\psfrag{estimator}{}
\psfrag{sigma=0.1}{{\tiny $\sigma=0.1$}}
\psfrag{sigma=0.2}{{\tiny $\sigma=0.2$}}
\psfrag{sigma=0.3}{{\tiny $\sigma=0.3$}}
\psfrag{sigma=0.4}{{\tiny $\sigma=0.4$}}
\psfrag{sigma=0.5}{{\tiny $\sigma=0.5$}}
\psfrag{sigma=0.6}{{\tiny $\sigma=0.6$}}
\psfrag{sigma=0.7}{{\tiny $\sigma=0.7$}}
\psfrag{sigma=0.8}{{\tiny $\sigma=0.8$}}
\psfrag{sigma=0.9}{{\tiny $\sigma=0.9$}}
\psfrag{sigma=1}{{\tiny $\sigma=1$}}
\psfrag{ks=0.76}{{\tiny $p=0.76$}}

\psfrag{ks=0.47}{{\tiny $p=0.47$}}

\psfrag{ks=0.81}{{\tiny $p=0.81$}}

\psfrag{ks=0.94}{{\tiny $p=0.94$}}

\psfrag{ks=0.80}{{\tiny $p=0.80$}}

\psfrag{ks=0.92}{{\tiny $p=0.92$}}

\psfrag{ks=0.71}{{\tiny $p=0.71$}}

\psfrag{ks=0.76}{{\tiny $p=0.76$}}

\psfrag{ks=0.92}{{\tiny $p=0.92$}}

\psfrag{ks=0.84}{{\tiny $p=0.84$}}

\psfrag{ks=0.54}{{\tiny $p=0.54$}}

\psfrag{ks=0.99}{{\tiny $p=0.99$}}

\psfrag{ks=0.37}{{\tiny $p=0.37$}}

\psfrag{ks=0.58}{{\tiny $p=0.58$}}

\psfrag{ks=0.94}{{\tiny $p=0.94$}}

\psfrag{ks=0.83}{{\tiny $p=0.83$}}

\psfrag{ks=0.88}{{\tiny $p=0.88$}}

\psfrag{ks=0.43}{{\tiny $p=0.43$}}

\psfrag{ks=0.65}{{\tiny $p=0.65$}}

\psfrag{ks=0.98}{{\tiny $p=0.98$}}

\psfrag{ks=0.68}{{\tiny $p=0.68$}}

\psfrag{ks=1.00}{{\tiny $p=1.00$}}

\psfrag{ks=0.51}{{\tiny $p=0.51$}}

\psfrag{ks=0.65}{{\tiny $p=0.65$}}

\psfrag{ks=0.80}{{\tiny $p=0.80$}}

\psfrag{ks=0.84}{{\tiny $p=0.84$}}

\psfrag{ks=0.73}{{\tiny $p=0.73$}}

\begin{figure}[htb]
\begin{tabular}{c|c|c}
\includegraphics[width=4cm, height=4cm]{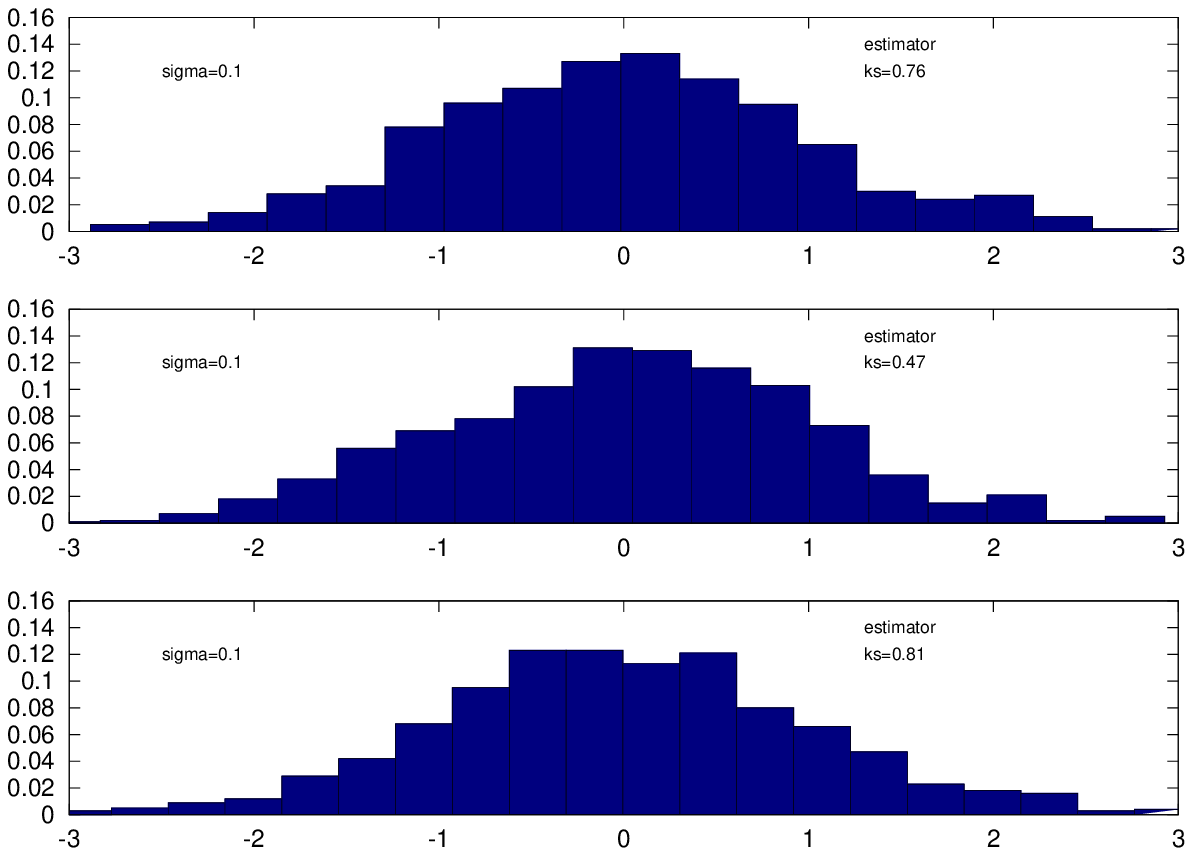} &
\includegraphics[width=4cm, height=4cm]{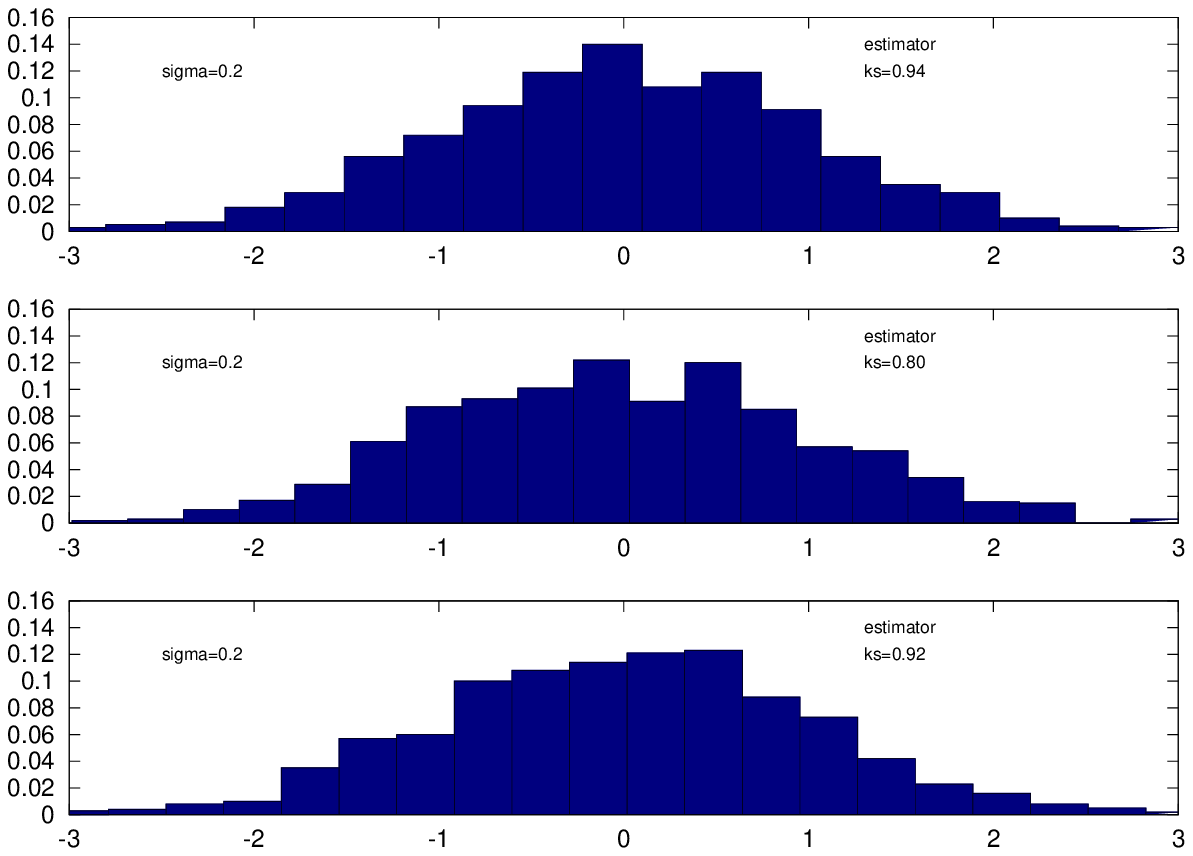} &
\includegraphics[width=4cm, height=4cm]{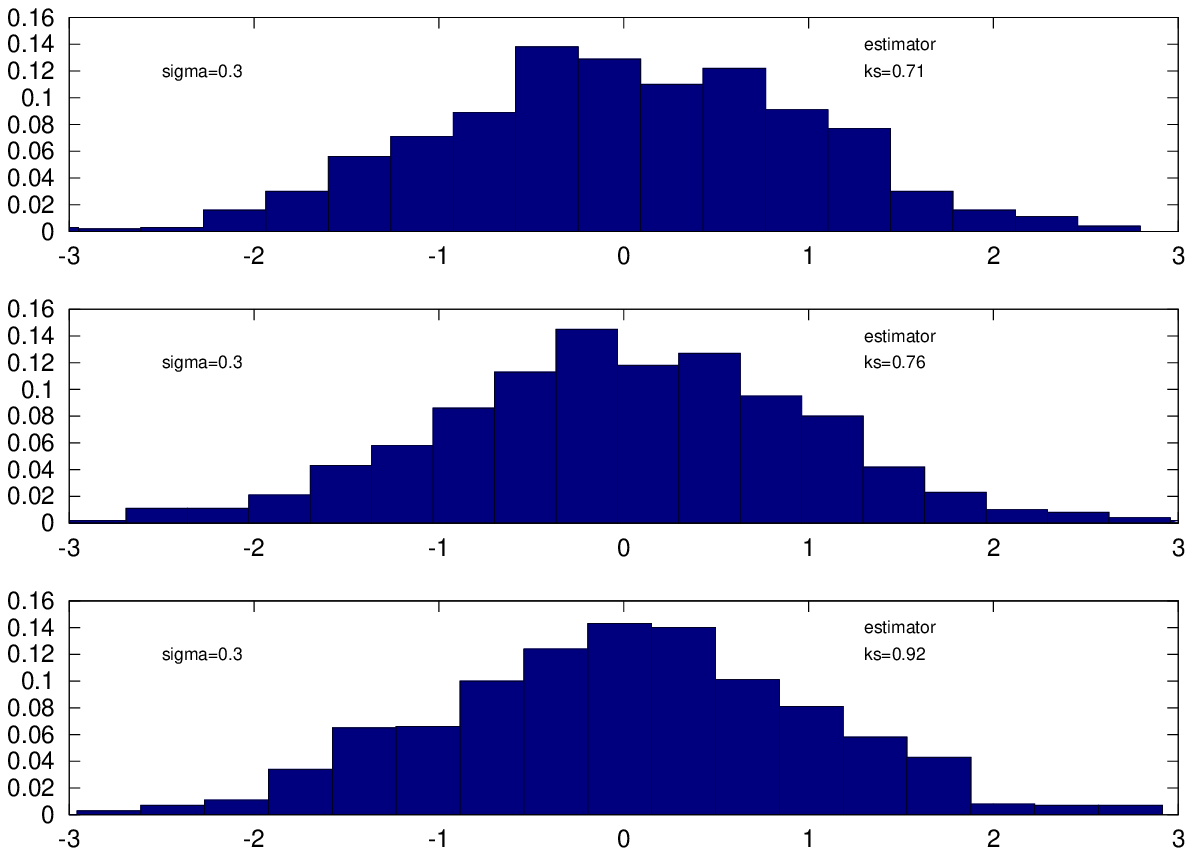} \\
\hline
\includegraphics[width=4cm, height=4cm]{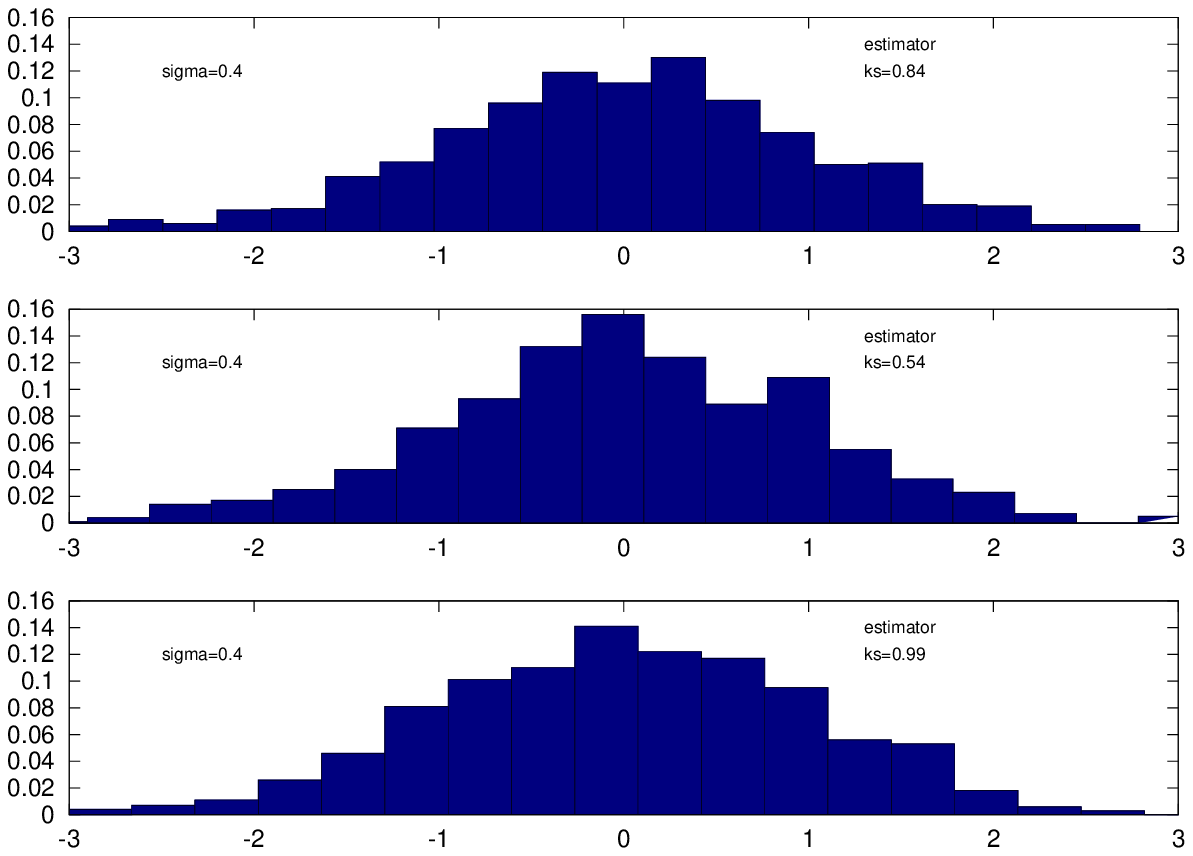} &
\includegraphics[width=4cm, height=4cm]{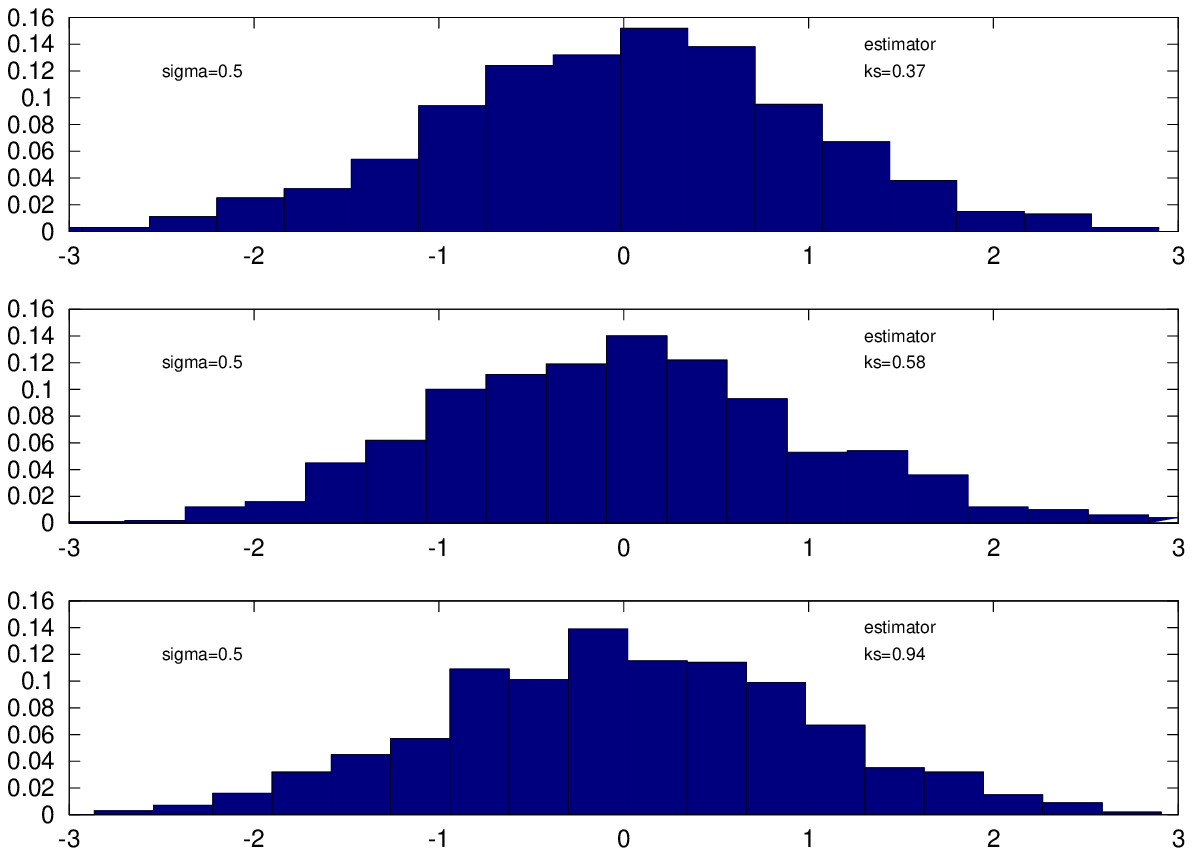} &
\includegraphics[width=4cm, height=4cm]{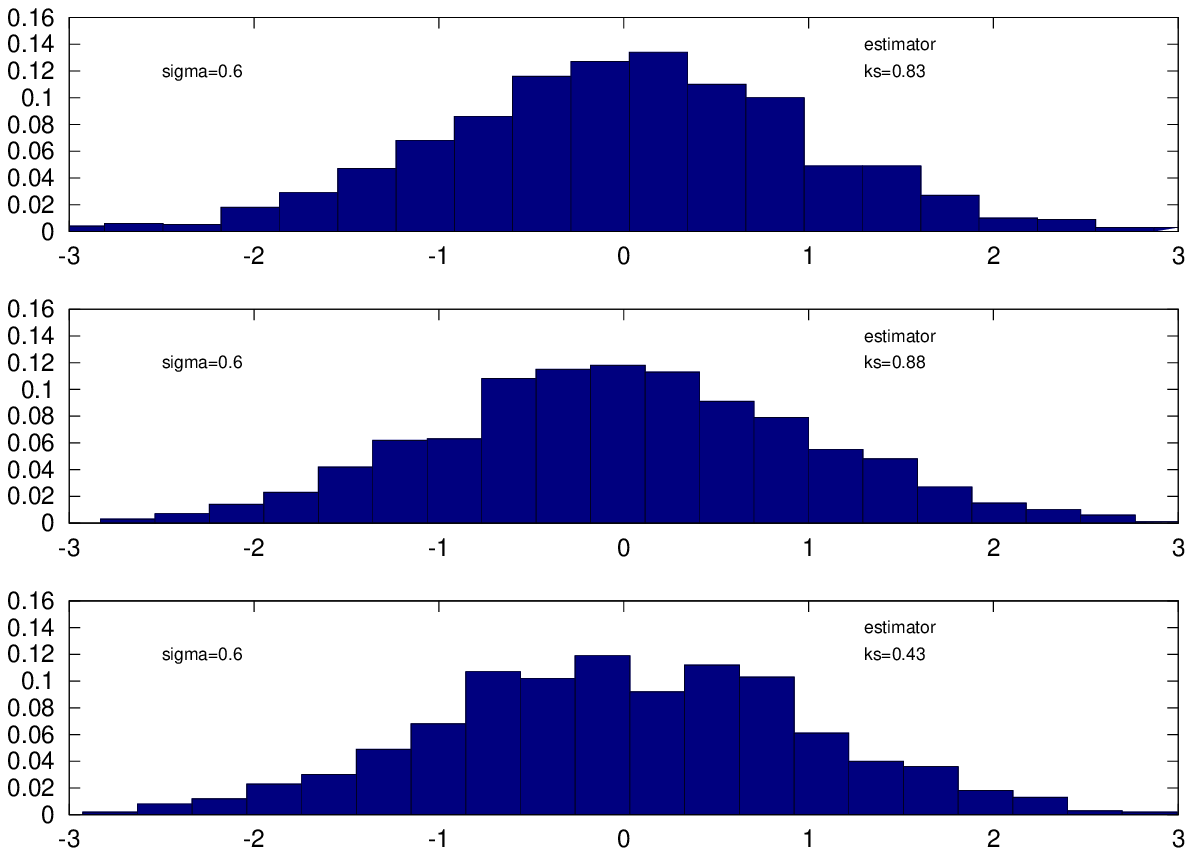} \\
\hline
\includegraphics[width=4cm, height=4cm]{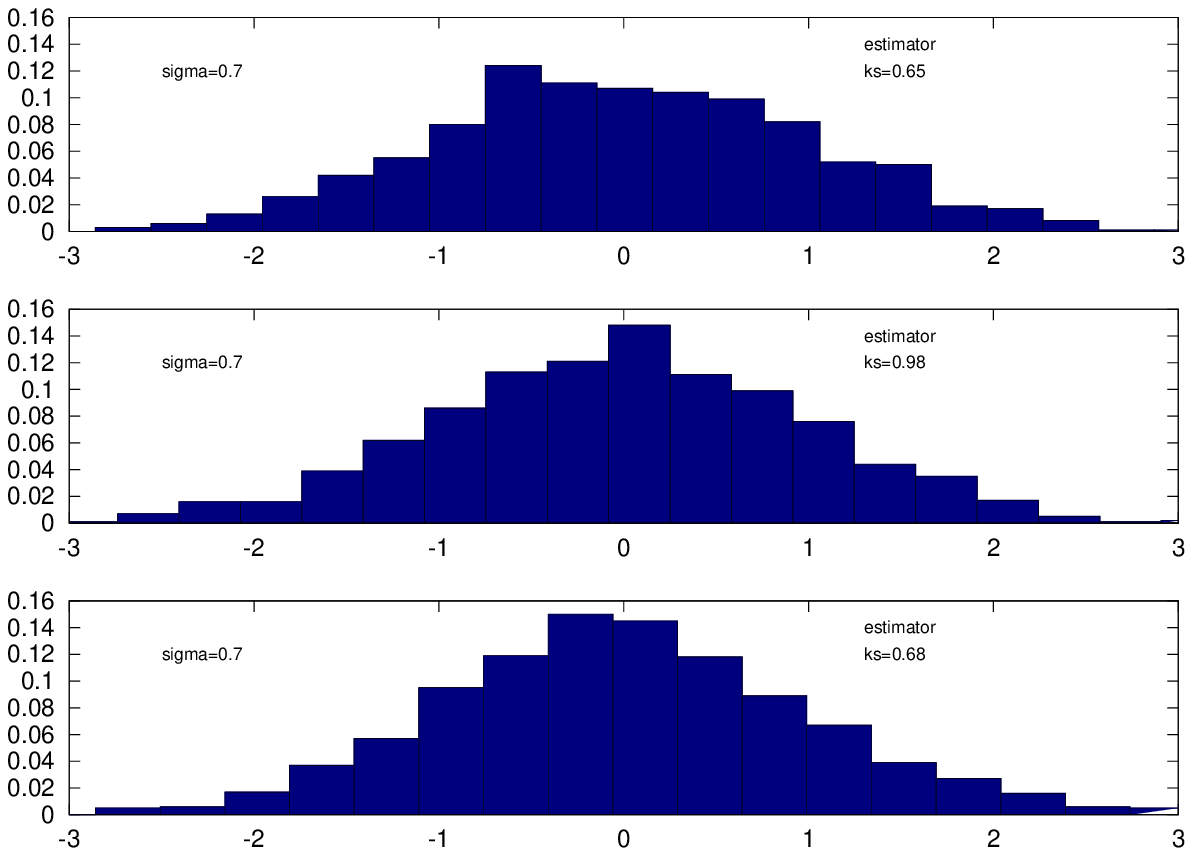} &
\includegraphics[width=4cm, height=4cm]{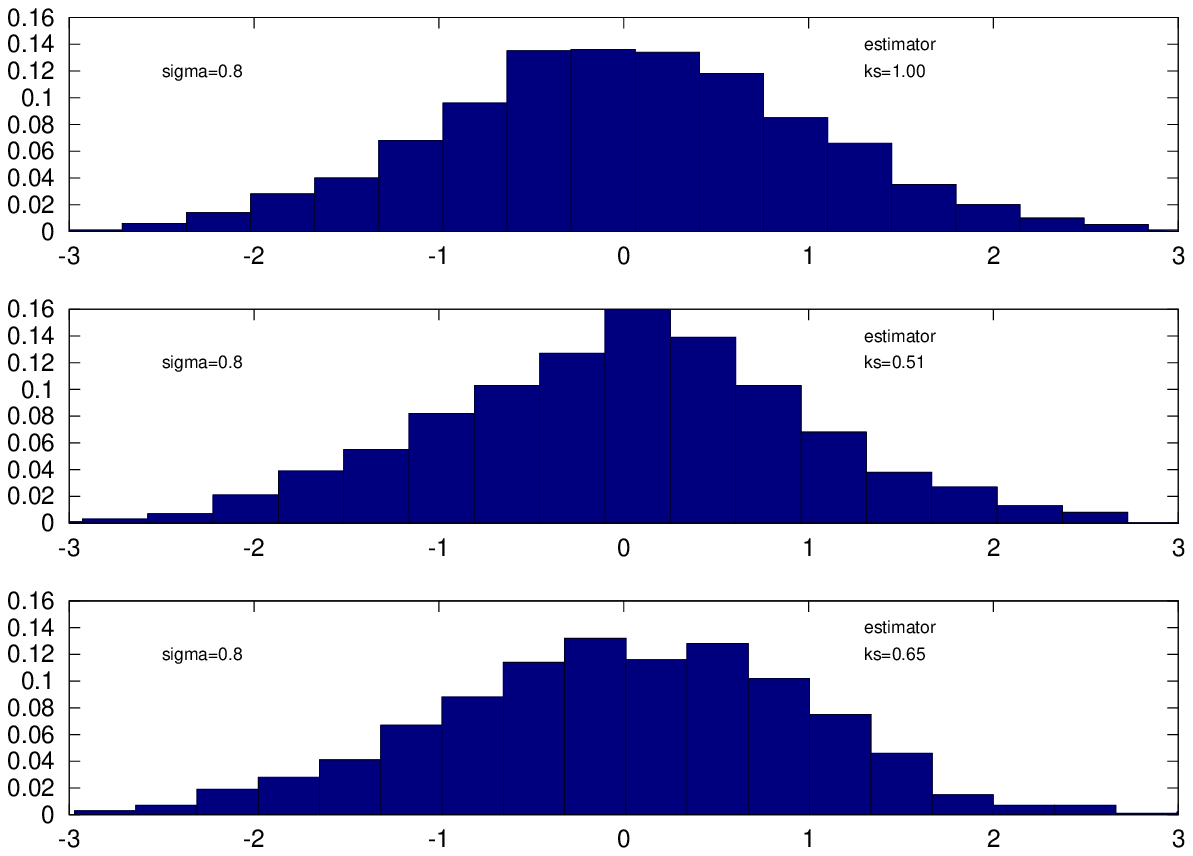} &
\includegraphics[width=4cm, height=4cm]{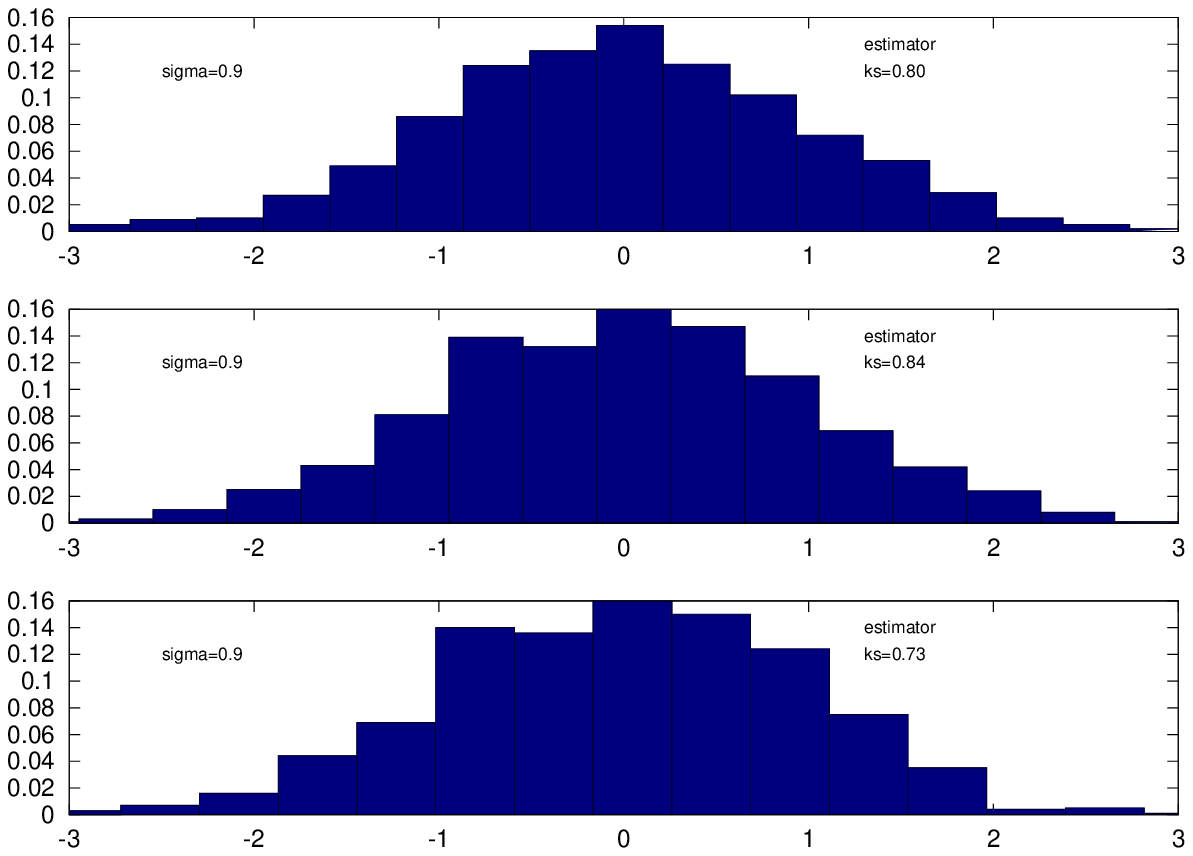}
\end{tabular}
\caption{The histogram of the normalised deviations from the mean of
  the regressor $\hat \gamma =
  \hat \gamma(\y)$. The p-value for the Kolmogorov-Smirnov test of
  normality is reported.} \label{fig:reg-id-s2-distribution}
\end{figure}
}
\end{center}
}

}
\end{example}

\subsection{A bayesian approach} \label{ssec:reg-bayes} 
Let $\y = (y_1,\ldots,y_k) \in \Phi = \Lambda^k$ and let
$\lambda(\gamma)\, \d \gamma$ be a bayesian prior on $\Gamma$
($\Gamma$ is only assumed to be a smooth submanifold of
$C^{\infty}(\Theta,\Lambda)$ at this point). Let $\ell : \Gamma \times
\Gamma \to \R$ be a loss function as defined in the introduction to
section \ref{sec:reg} and assume that $f(\y|\gamma) = \prod_i
f(y_i|\gamma(\theta_i))$ is the conditional density of $\y$. The
bayesian risk of $\hat\gamma \in \Gamma$ is then
\begin{align} \label{al:bay-risk}
\brisk{\hat\gamma} &= \int_{\y \in \Phi} \int_{\gamma\in \Gamma}
\ell(\hat \gamma, \gamma)\, f(\y|\gamma)\, \lambda(\gamma)\, \d\y\, \d \gamma.
\end{align}
One can define quantities
\begin{align} \label{al:bay-k}
\mu(\y) &= \int_{\gamma\in \Gamma} f(\y|\gamma)\, \lambda(\gamma)\, \d
\gamma, & \lambda(\gamma|\y) = \frac{f(\y|\gamma)\,
  \lambda(\gamma)}{\mu(\y)}, \notag\\
\brisk{\hat\gamma|\y}&= \int_{\gamma\in \Gamma} \ell(\hat
  \gamma,\gamma)\, \lambda(\gamma|\y)\, \d \gamma
\end{align}
to arrive at
\begin{align} \label{al:bay-cov}
\brisk{\hat\gamma} &= \int_{\y \in \Phi} \brisk{\hat \gamma|\y}\, \mu(\y)\,\d\y,
\end{align}
where dependence on the design points $\theta_i$ has been omitted for
notational compactness. Therefore, one can choose a bayesian
estimator $\hat \gamma$ by minimising the posterior risk
\begin{align} \label{al:bay-est}
g(\y) &= \argmin \left\{ \brisk{\hat\gamma | \y} \ : \ \hat\gamma
\in \Gamma  \right\}, && g : \Phi \to \Gamma.
\end{align}
Since $\Gamma$ is assumed to be compact, the bayesian estimator $g$ is
defined for all $\y$ and measurable. If, in addition, $\ell$ is
smooth, then $\brisk{\hat\gamma|\y}$ is smooth in $\hat\gamma$.

The following notation is useful in formulating the first-order
necessary condition to determine $g(\y)$. Let $\ell=\ell(\hat
\gamma,\gamma)$ be a smooth function that is defined for all pairs of
maps $\hat \gamma, \gamma$ in $C^{\infty}(\Theta,\Lambda)$. One may
view $\ell$ as a function of $\hat \gamma$ depending on the parameter
$\gamma$. Let
\begin{align} \label{al:bay-pd}
\frac{\partial \ell}{\partial \hat \gamma} &\in T^*_{\hat \gamma}
C^{\infty}(\Theta,\Lambda) 
\end{align}
be the $1$-form defined by fixing $\gamma$ and taking the derivative
with respect to $\hat \gamma$. In this case, the map $\gamma \mapsto
\frac{\partial \ell}{\partial \hat \gamma}$ is a smooth map from
$C^{\infty}(\Theta,\Lambda)$ to the vector space $T^*_{\hat \gamma}
C^{\infty}(\Theta,\Lambda)$.

\begin{proposition} \label{pr:bay-foc}
Assume that the loss function $\ell$ is a smooth function on
$C^{\infty}(\Theta,\Lambda) \times C^{\infty}(\Theta,\Lambda)$. Then, 
\begin{align} \label{al:bay-foc-ddr}
\dfrac{\partial \phantom{\hat \gamma}}{\partial \hat \gamma}
\brisk{\hat \gamma|\y} &= \int_{\gamma\in \Gamma} \frac{\partial
  \ell}{\partial \hat \gamma}\, \lambda(\gamma|\y)\, \d \gamma
\end{align}
If $\hat \gamma = g(\y)$ is a bayesian estimator satisfying
\eqref{al:bay-est}, then
\begin{align} \label{al:bay-foc}
\dfrac{\partial \phantom{\hat \gamma}}{\partial \hat \gamma}
\brisk{\hat \gamma|\y} & \text{ lies in } \nb[\hat \gamma]{\Gamma} \subset T^*_{\hat \gamma}
C^{\infty}(\Theta,\Lambda).
\end{align}
\end{proposition}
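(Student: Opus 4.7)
The plan is to prove the two claims separately, with the first being a standard differentiation-under-the-integral argument and the second a Lagrange-multiplier-style first-order condition adapted to the Fréchet manifold setting.

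For the identity \eqref{al:bay-foc-ddr}, I would first fix a test tangent vector $v \in T_{\hat\gamma} C^{\infty}(\Theta,\Lambda)$ and a smooth curve $\hat\gamma_t$ representing it. Because $\ell$ is smooth on the product Fréchet manifold $C^{\infty}(\Theta,\Lambda) \times C^{\infty}(\Theta,\Lambda)$ and $\Gamma$ is compact, the family of derivatives $\left.\ddt{t}\right|_{t=0} \ell(\hat\gamma_t,\gamma) = \frac{\partial \ell}{\partial \hat\gamma}\cdot v$ is uniformly bounded in $\gamma \in \Gamma$, as is the posterior weight $\lambda(\gamma|\y)$. Dominated convergence then justifies exchanging $\ddt{t}$ with $\int_\Gamma \cdot\,\lambda(\gamma|\y)\,\d\gamma$, giving \eqref{al:bay-foc-ddr} paired against $v$. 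Since this holds for every $v$, the stated equality of cotangent vectors follows.

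For the first-order condition \eqref{al:bay-foc}, I would argue exactly as in classical constrained optimization. By definition, $g(\y)$ is a minimum of the smooth functional $\hat\gamma \mapsto \brisk{\hat\gamma|\y}$ restricted to the submanifold $\Gamma \subset C^{\infty}(\Theta,\Lambda)$. Consequently, for every smooth curve $\hat\gamma_t$ in $\Gamma$ passing through $g(\y)$ at $t=0$, the scalar function $t \mapsto \brisk{\hat\gamma_t|\y}$ has a critical point at $0$. Expanding this using the chain rule and \eqref{al:bay-foc-ddr} shows that the cotangent vector $\dfrac{\partial}{\partial\hat\gamma}\brisk{\hat\gamma|\y}$ annihilates every $v \in T_{\hat\gamma}\Gamma$. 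Identifying the normal space $\nb[\hat\gamma]{\Gamma}$ with the annihilator of $T_{\hat\gamma}\Gamma$ in $T^*_{\hat\gamma} C^{\infty}(\Theta,\Lambda)$ yields precisely \eqref{al:bay-foc}.

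The only genuine subtlety is the first step: verifying that differentiation under the integral is valid in the Fréchet context. The point is that the integral is over the \emph{finite-dimensional} compact manifold $\Gamma$, while the variable of differentiation $\hat\gamma$ lives in the Fréchet manifold; thus one only needs uniform bounds in $\gamma \in \Gamma$ of the directional derivative $\partial_v \ell(\hat\gamma,\gamma)$, and these are automatic from smoothness and compactness. The remainder of the argument is formally identical to finite-dimensional constrained optimization, so no further difficulty arises.
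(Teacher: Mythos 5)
Your argument is correct and is essentially the proof the paper has in mind: the paper merely remarks that the result is straightforward once one notes (via \eqref{al:bay-pd}) that the forms $\frac{\partial \ell}{\partial \hat\gamma}$ all lie in the single fixed vector space $T^*_{\hat\gamma}C^{\infty}(\Theta,\Lambda)$, so the integral over the compact finite-dimensional $\Gamma$ is well defined and differentiation under the integral plus the usual first-order condition on the submanifold $\Gamma$ (the derivative annihilating $T_{\hat\gamma}\Gamma$, i.e.\ lying in $\nb[\hat\gamma]{\Gamma}$) finishes the proof. Your elaboration of the dominated-convergence justification and the constrained-optimization step matches this, with no substantive difference.
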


The proof of this proposition is straightforward. One observes that
the integral on the right-hand side is well defined since, by
\eqref{al:bay-pd}, one is integrating a smooth function which
takes values in a single vector space.

\subsubsection{The squared-norm loss function} \label{sssec:bay-sqn}
As $(\Lambda,\metric{h})$ is assumed to be isometrically embedded in
$(\E,\sigma)$ as in \eqref{eq:cd-1}, one may define an $L^2$ metric on
$C^{\infty}(\Theta,\Lambda)$ by means of the ambient euclidean structure
\begin{align} \label{al:bay-sqn}
|\gamma|^2 &= \int_{\theta\in \Theta} |\gamma(\theta)|^2\, \d \theta
&& \forall \gamma \in C^{\infty}(\Theta,\Lambda).
\end{align}
A natural squared-norm loss function is then
\begin{align} \label{al:bay-loss}
\ell(\hat \gamma,\gamma) &= |\hat \gamma-\gamma|^2 && \forall\hat
\gamma,\gamma \in C^{\infty}(\Theta,\Lambda).
\end{align}
(The requisite '$\j$'s in (\ref{al:bay-sqn}--\ref{al:bay-loss}) are
suppressed for simplicity). 

\begin{proposition} \label{pr:bay}
Let $\Gamma \subset C^{\infty}(\Theta,\Lambda)$ be a smooth
submanifold and the loss function $\ell$ be defined as in
\eqref{al:bay-loss}. If $\hat{\gamma}=g(\y)$ is a bayesian estimator
as in \eqref{al:bay-est}, then
\begin{align} \label{al:bay-est-on}
\j\bar{\gamma} &:= \int_{\gamma\in \Gamma} \j\gamma\, \lambda(\gamma|\y)\,
\d \gamma && \text{satisfies } \j\bar{\gamma} \in
\nb[\hat{\gamma}]{\Gamma} \subset T^*_{\hat \gamma}C^{\infty}(\Theta,\E).
\end{align}
\end{proposition}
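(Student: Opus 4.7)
The plan is to specialise Proposition~\ref{pr:bay-foc} to the squared-norm loss $\ell(\hat\gamma,\gamma)=|\hat\gamma-\gamma|^2$ and evaluate the resulting integral explicitly; the conclusion then drops out algebraically.

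First I would compute $\partial\ell/\partial\hat\gamma$. Because the $L^2$ bilinear form defined by \eqref{al:bay-sqn} is smooth and bilinear in its arguments, a one-line product-rule calculation gives, for every $v\in T_{\hat\gamma}C^{\infty}(\Theta,\E)$,
\[
\frac{\partial\ell}{\partial\hat\gamma}\cdot v \;=\; 2\int_{\theta\in\Theta}\bigl\langle \j\hat\gamma(\theta)-\j\gamma(\theta),\, v(\theta)\bigr\rangle\,\d\theta.
\]
Under the $L^2$ identification of $T_{\hat\gamma}C^{\infty}(\Theta,\E)$ with its own dual, this says $\partial\ell/\partial\hat\gamma=2(\j\hat\gamma-\j\gamma)$, and the dependence on the parameter $\gamma$ is manifestly a smooth map into the fixed Fr\'echet space $T^{*}_{\hat\gamma}C^{\infty}(\Theta,\E)$, as required by the hypothesis of Proposition~\ref{pr:bay-foc}.

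Next I would substitute this into the formula \eqref{al:bay-foc-ddr} and interchange differentiation and integration. The interchange is justified by the compactness of $\Gamma$, the fact that $\lambda(\cdot\vert\y)$ is a probability density, and the uniform-in-$\gamma$ smoothness of the integrand in $\hat\gamma$ (dominated convergence). Using $\int_{\Gamma}\lambda(\gamma\vert\y)\,\d\gamma=1$ together with the definition of $\j\bar\gamma$, the right-hand side collapses to
\[
\frac{\partial\phantom{\hat\gamma}}{\partial\hat\gamma}\brisk{\hat\gamma\vert\y} \;=\; 2\j\hat\gamma-2\j\bar\gamma.
\]

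The Bayesian first-order condition \eqref{al:bay-foc} then states that $2(\j\hat\gamma-\j\bar\gamma)$ lies in $\nb[\hat\gamma]{\Gamma}$, which, viewing the normal fibre as an affine subspace of $C^{\infty}(\Theta,\E)$ based at $\j\hat\gamma$, is exactly the assertion \eqref{al:bay-est-on}. The only non-routine ingredient is bookkeeping: one must check that the $L^{2}$ pairing is compatible with the splitting $T_{\hat\gamma}C^{\infty}(\Theta,\E)=T_{\hat\gamma}\Gamma\oplus\nb[\hat\gamma]{\Gamma}$ used to formulate the normality condition, and that differentiation under the integral is legal in the Fr\'echet-manifold sense outlined in \S\ref{ssec:reg-foc}. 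Both of these are standard given the compactness of $\Gamma$ and smoothness of $\ell$, so the main obstacle is really just formalising the infinite-dimensional setting rather than any new computation.
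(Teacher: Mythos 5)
Correct, and essentially the same as the paper's own proof: the paper likewise computes $\partial\ell/\partial\hat\gamma$ as the $L^{2}$ pairing against $\j\hat\gamma-\j\gamma$, differentiates under the posterior integral via Proposition \ref{pr:bay-foc}, and uses $\int_{\Gamma}\lambda(\gamma|\y)\,\d\gamma=1$ to reduce the first-order condition to $\j\hat\gamma\equiv\j\bar\gamma \bmod \nb[\hat\gamma]{\Gamma}$, which is exactly how \eqref{al:bay-est-on} is to be read. The only differences are cosmetic: the paper keeps the pullback $\d_{\hat\gamma(\theta)}\j^{*}$ explicit and records the pointwise-in-$\theta$ form of the condition, whereas you work directly with the ambient pairing and justify the interchange of differentiation and integration a little more explicitly.
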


\begin{remark} \label{re:bay}
{\rm

One considers $\j\bar{\gamma}$ to be a form in $T^*_{\hat \gamma}
C^{\infty}(\Theta,\E)$ and not in $T^*_{\hat \gamma}
C^{\infty}(\Theta,\Lambda)$ in equation \eqref{al:bay-est-on} due to
the natural embedding $\Lambda \subset \E$.

}
\end{remark}

\begin{proof}
In this case, the smoothness of $\brisk{\hat\gamma|\y}$ in $\hat\gamma$ is
immediate from the loss function. One computes that 
\begin{align}
\frac{\partial \ell}{\partial \hat \gamma} &= \int_{\theta\in \Theta}
\d_{\hat \gamma(\theta)} \j^*\, \left( \j\hat \gamma(\theta)-\j
\gamma(\theta)  \right)\, \d \theta && \in T_{\hat \gamma}^*
C^{\infty}(\Theta,\E),\notag\\
\intertext{whence}
\frac{\partial \phantom{\hat \gamma}}{\partial \hat \gamma} \brisk{\hat \gamma|\y} &=
\int_{\theta\in \Theta} \d \theta \, \d_{\hat \gamma(\theta)} \j^*
\left\{ \int_{\gamma\in \Gamma} \d \gamma \, \lambda(\gamma|\y)\,
\left( \j\hat \gamma(\theta)-\j
\gamma(\theta) \right) \right\}. \label{al:bay-dell}
\end{align}
Proposition \ref{pr:bay-foc} shows that the left-hand side of
\eqref{al:bay-dell} lies in $\nb[\hat \gamma]{\Gamma}$ if $\hat
\gamma$ is a bayesian estimator. Define $\xi \in T^*_{\hat \gamma}
C^{\infty}(\Theta,\E)$ by
\begin{align}
\xi(\theta) &= \d_{\hat \gamma(\theta)} \j^*
\left\{ \int_{\gamma\in \Gamma} \d \gamma \, \lambda(\gamma|\y)\,
\left( \j\hat \gamma(\theta)-\j
\gamma(\theta) \right) \right\} && \forall \theta\in \Theta.  \label{al:bay-xi}
\intertext{One observes that the right-hand side of
\eqref{al:bay-dell} vanishes on $T_{\hat \gamma}\Gamma$ if $\xi$
vanishes on $T_{\hat \gamma}\Gamma$, and $\xi$ vanishes on $T_{\hat
  \gamma}\Gamma$ if $\xi$ vanishes, \ie\hspace{-1mm}, if}
\j \hat \gamma(\theta) &\equiv \int_{\gamma\in \Gamma} \d \gamma \,
\lambda(\gamma|\y)\, \j
\gamma(\theta) & \bmod \nb[\hat \gamma]{\Gamma}_{\theta},\notag\\
\intertext{where $\nb[\hat \gamma]{\Gamma}_{\theta}$ is the subspace
  of $T^*_{\hat \gamma(\theta)} \E$ generated by elements of $\nb[\hat
  \gamma]{\Gamma}$ (which are sections of $\hat \gamma^*T^* \E$)
  evaluated at $\theta$. Therefore, one obtains}
\j \hat \gamma &\equiv \int_{\gamma\in \Gamma} \d \gamma \,
\lambda(\gamma|\y)\,
\j \gamma & \bmod \nb[\hat \gamma]{\Gamma}, \label{al:bay-xi-0}
\end{align}
which proves the proposition.
\end{proof}

\subsubsection{Estimation of Linear Maps} \label{sssec:lin}

Assume that both $\Theta$ and $\Lambda$ are isometrically embedded in
euclidean spaces $\E_0$ and $\E_1$ respectively. Let $\Gamma \subset
\Hom{\E_0,\E_1}$ be a manifold of linear maps that maps $\Theta$ to
$\Lambda$. Inspection of the right-hand side of \eqref{al:bay-est-on}
shows that $\bar \gamma$ is itself the restriction of a linear map to
$\Theta$, so the bayesian estimator $\hat \gamma$ can be described
using only the geometry of $\Hom{\E_0,\E_1}$. 

Define a positive semi-definite quadratic form on $\Hom{\E_0,\E_1}$ by
\begin{align}
\langle\langle \alpha  , \beta \rangle\rangle &= \trace{\alpha'\cdot
\beta \cdot \tau} && \forall \alpha,\beta \in \Hom{\E_0,\E_1}, \notag\\
\intertext{where}
\tau &= \int_{\theta \in \Theta} \j(\theta) \otimes \j(\theta)'\, \d
  \theta \in \Hom{\E_0,\E_0}.  \label{al:ip01}
\end{align}
The first-order condition \eqref{al:bay-est-on} implies that the
bayesian estimator $\hat \gamma$ satisfies
\begin{align} \label{al:lin-foc}
\hat \gamma \cdot \tau &\equiv \bar \gamma \cdot \tau & \bmod
\nb[\hat \gamma]{\Gamma},
\end{align}
where $\nb[\hat \gamma]{\Gamma}$ is the normal space to $T_{\hat
\gamma}\Gamma$ in $\Hom{\E_0,\E_1}$.

\begin{proposition} \label{pr:bay-lin}
Let $\Gamma \subset \Hom{\E_0,\E_1}$ be a submanifold and the loss
function $\ell$ be defined as in \eqref{al:bay-sqn}. Suppose that
$\Theta$ spans $\E_0$. If $\hat{\gamma}=g(\y)$ is a bayesian estimator
as in \eqref{al:bay-est}, then the linear transformation
\begin{align} \label{al:bay-est-on-lin}
\bar{\gamma} &:= \int_{\gamma\in \Gamma} \gamma\, \lambda(\gamma|\y)\,
\d \gamma && \text{satisfies }  \bar{\gamma} \equiv
\hat{\gamma} \bmod \nb[\hat{\gamma}]{\Gamma} \cdot \tau^{-1}.
\end{align}
\end{proposition}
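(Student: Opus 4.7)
The plan is to derive Proposition \ref{pr:bay-lin} as a direct specialisation of Proposition \ref{pr:bay-foc} (equivalently, of Proposition \ref{pr:bay}) to the case where $\hat{\gamma}$ and $\gamma$ range over the linear submanifold $\Gamma \subset \Hom{\E_0,\E_1}$. The key is to rewrite the $L^2$-loss \eqref{al:bay-loss} in terms of the matrix $\tau$ of \eqref{al:ip01}, compute its differential with respect to $\hat\gamma$ explicitly, and then exchange the derivative with the posterior integral to get a closed-form expression.

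First, I would observe that in the linear setting the ambient loss \eqref{al:bay-sqn} restricts to
$$
\ell(\hat{\gamma},\gamma) \;=\; \int_{\theta \in \Theta} |\hat{\gamma}\iota(\theta) - \gamma\iota(\theta)|^2\, \d\theta \;=\; \trace{(\hat{\gamma}-\gamma)'(\hat{\gamma}-\gamma)\,\tau},
$$
by cyclicity of trace and the definition of $\tau$ in \eqref{al:ip01}. The spanning assumption on $\Theta$ is precisely what forces the positive semi-definite form $\tau$ to be strictly positive definite, hence invertible.

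Next, I would identify $T^*_{\hat{\gamma}}\Hom{\E_0,\E_1}$ with $\Hom{\E_0,\E_1}$ via the trace form and differentiate: for $v \in T_{\hat{\gamma}}\Hom{\E_0,\E_1}$,
$$
\frac{\partial \ell}{\partial \hat{\gamma}}\cdot v \;=\; 2\trace{v'(\hat{\gamma}-\gamma)\,\tau}\qquad \text{(using the symmetry of } \tau),
$$
so the gradient representative is the matrix $2(\hat{\gamma}-\gamma)\tau$. Proposition \ref{pr:bay-foc} allows one to pull this linear operation inside the posterior integral, yielding
$$
\frac{\partial\phantom{\hat{\gamma}}}{\partial \hat{\gamma}}\brisk{\hat{\gamma}|\y} \;=\; 2\int_{\gamma \in \Gamma} (\hat{\gamma}-\gamma)\tau\,\lambda(\gamma|\y)\,\d\gamma \;=\; 2(\hat{\gamma}-\bar{\gamma})\,\tau,
$$
with $\bar{\gamma}$ as in \eqref{al:bay-est-on-lin}.

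Finally, Proposition \ref{pr:bay-foc} tells us that the left-hand side must lie in $\nb[\hat{\gamma}]{\Gamma}$ at a Bayes estimator; that is, $(\hat{\gamma}-\bar{\gamma})\tau \in \nb[\hat{\gamma}]{\Gamma}$. Since $\Theta$ spans $\E_0$, $\tau$ is invertible and right multiplication by $\tau^{-1}$ converts this to $\hat{\gamma}-\bar{\gamma} \in \nb[\hat{\gamma}]{\Gamma}\cdot\tau^{-1}$, which is the desired congruence \eqref{al:bay-est-on-lin}. The only subtlety to check carefully is that the passage from the ambient conclusion of Proposition \ref{pr:bay} (stated for forms in $T^*_{\hat\gamma}C^\infty(\Theta,\E)$) to matrices in $\Hom{\E_0,\E_1}$ is compatible with the identifications used; the spanning hypothesis ensures the pairing $\langle\langle\cdot,\cdot\rangle\rangle$ of \eqref{al:ip01} is nondegenerate, so no information is lost in this passage. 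This is the main—though mild—obstacle.
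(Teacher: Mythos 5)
Your proof is correct and takes essentially the same route as the paper: the paper obtains the matrix first-order condition $\hat\gamma\,\tau \equiv \bar\gamma\,\tau \bmod \nb[\hat\gamma]{\Gamma}$ (its equation \eqref{al:lin-foc}) from Proposition \ref{pr:bay} in the discussion preceding the proposition, exactly as you do by differentiating $\trace{(\hat\gamma-\gamma)'(\hat\gamma-\gamma)\tau}$ and exchanging derivative and posterior integral, and then the formal proof consists solely of showing $\tau$ (equivalently $\llangle\cdot,\cdot\rrangle$) is nondegenerate. The one step you assert rather than prove is precisely that nondegeneracy, which is immediate and worth a line: $0=\llangle v,v\rrangle=\int_{\theta\in\Theta}|v\cdot\theta|^2\,\d\theta$ forces $\Theta\subseteq\ker v$, so $\E_0=\spn{\Theta}\subseteq\ker v$ and $v=0$.
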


\begin{proof}
The only thing that remains to prove is that $\tau$ is non-degenerate
if $\Theta$ spans $\E_0$. If $v \in \Hom{\E_0,\E_1}$ and
\begin{align} \label{al:ka}
0 = \left\langle\langle v , v \right\rangle\rangle &=
 \int_{\theta \in \Theta} |v\cdot \theta|^2\, \d \theta, &\text{then }&
 \Theta \subseteq \ker v.
\end{align}
Therefore, $\E_0 = \spn{\Theta} \subseteq \ker v,$ so $v=0$.
\end{proof}

\begin{remark} \label{re:kim}
{\rm

Let $\Theta$ be the unit sphere in $\E_0$. One computes that
$\tau$ is a scalar multiple of the identity matrix, whence condition
\eqref{al:bay-est-on-lin} is simply that $\hat \gamma$ is the
orthogonal projection onto $\Gamma$ of $\bar \gamma$.

}
\end{remark}

Let $\ell$ be the loss function on $\Gamma$ induced by the
inner product on $\Hom{\E_0,\E_1}$:
\begin{align}
\ell(\hat \gamma, \gamma) &= |\hat{\gamma}-\gamma|^2 = \trace{
  (\hat{\gamma}-\gamma)'(\hat{\gamma}-\gamma) }
 && \forall \hat \gamma, \gamma \in \Gamma.  \label{al:bay-sqn-l}
\end{align}
When $\E_0=\E_1$ and $\Gamma \subset \Orth{\E}$, the loss function
simplifies to $2s - 2\, \trace{\hat{\gamma}' \gamma}$ where
$s=\dim\E$.

\subsubsection{The intrinsic distance loss function} \label{sssec:intrinsic-d}

Let $\hat \gamma, \gamma \in C^{\infty}(\Theta,\Lambda)$ be smooth
maps between the riemannian manifolds $(\Theta,\metric{g})$ and
$(\Lambda,\metric{h})$. For each $\theta \in \Theta$, let $w(\theta)
\in T_{\hat \gamma(\theta)} \Lambda$ be a tangent vector to a shortest
geodesic $c(s) = \exp_{\hat \gamma(\theta)}(s\cdot w(\theta))$ such
that $c(1) = \gamma(\theta)$.

If $\gamma(\theta)$ does not lie in the cut locus of $\hat
\gamma(\theta)$, the tangent vector $w(\theta)$ is uniquely defined
and one may unambiguously write $w(\theta) = \log_{\hat
\gamma(\theta)}(\gamma(\theta))$. It is apparent that there are
measurable maps $\theta \mapsto w(\theta)$, and this map is smooth off
the above-mentioned set of ``bad'' points. In particular, if the graph
of $\gamma$ lies in a tubular neighbourhood of the graph of $\hat
\gamma$, then the map $w$ is a uniquely defined, smooth map.


Let $\cutlocus=\cutlocus_{\gamma,\hat \gamma} \subset \Theta$ be the
set of points $\theta$ such that $\gamma(\theta)$ lies in the cut
locus of $\hat \gamma(\theta)$. If the measure of $\cutlocus$ is zero,
then compactness of $\Lambda$ implies that $w$ is
square-integrable. Therefore, one may define a one-form $\omega =
\omega_{\gamma,\hat \gamma} \in T_{\hat \gamma}^*
C^{\infty}(\Theta,\Lambda)$ by
\begin{equation} \label{eq:omega-id}
\left\langle \omega,v\right\rangle = \int_{\theta \in \Theta}\ \d
\theta \cdot \metric{h}( w(\theta) , v(\theta) )_{\hat \gamma(\theta)}.
\end{equation}
for each $v \in T_{\hat \gamma}
C^{\infty}(\Theta,\Lambda)$.

\begin{proposition} \label{pr:id}
Let 
\begin{equation} \label{eq:loss-id}
\ell(\hat \gamma,\gamma) = \frac{1}{2} \, \int_{\theta \in \Theta}\ \d
\theta \cdot \dist(\hat \gamma(\theta),\gamma(\theta))^2,
\end{equation}
where $\dist$ is the riemannian distance function of
$(\Lambda,\metric{h})$. If $\cutlocus_{\gamma,\hat \gamma}$ has
measure zero and $\Lambda$ is compact, then $\dfrac{\partial
\ell}{\partial \hat \gamma}$ exists at $(\gamma,\hat \gamma)$ and
equals
\begin{equation} \label{eq:loss-pd}
\frac{\partial \ell}{\partial \hat \gamma} = -\omega_{\gamma,\hat\gamma}.
\end{equation}
\end{proposition}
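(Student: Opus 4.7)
The plan is to compute the Gateaux derivative $\frac{d}{dt}\big|_{t=0}\ell(\hat\gamma_t,\gamma)$ for an arbitrary smooth curve $\hat\gamma_t$ in $C^{\infty}(\Theta,\Lambda)$ with $\hat\gamma_0=\hat\gamma$ and $\left.\ddt{t}\right|_{t=0}\hat\gamma_t=v\in T_{\hat\gamma}C^{\infty}(\Theta,\Lambda)$, and then to identify the result with $-\omega_{\gamma,\hat\gamma}(v)$. The strategy has two ingredients: the pointwise identity supplied by Proposition \ref{pr:d-dist}, and a dominated-convergence argument that legitimises differentiation under the integral despite the fact that the integrand is not $C^1$ on the cut-locus set $\cutlocus=\cutlocus_{\gamma,\hat\gamma}\subset\Theta$.

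First, for each $\theta\in\Theta\setminus\cutlocus$ the function $x\mapsto\dist(\gamma(\theta),x)^2$ is smooth in a neighbourhood of $\hat\gamma(\theta)$, and Proposition \ref{pr:d-dist} combined with the chain rule gives the pointwise identity
\begin{equation}
\left.\ddt{t}\right|_{t=0}\tfrac{1}{2}\,\dist(\hat\gamma_t(\theta),\gamma(\theta))^2 \;=\; -\,\metric{h}\!\bigl(\log_{\hat\gamma(\theta)}\gamma(\theta),\,v(\theta)\bigr)_{\hat\gamma(\theta)} \;=\; -\,\metric{h}(w(\theta),v(\theta))_{\hat\gamma(\theta)}.
\end{equation}
Since $\cutlocus$ has measure zero by hypothesis, this pointwise convergence of the difference quotient holds almost everywhere on $\Theta$.

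The next step is to justify passing the limit $t\to0$ inside $\int_\Theta\d\theta$. Compactness of $\Lambda$ ensures $\dist$ is bounded by $D=\diam(\Lambda,\metric{h})$, and the elementary estimate $|\dist(\gamma(\theta),\hat\gamma_t(\theta))^2-\dist(\gamma(\theta),\hat\gamma(\theta))^2|\le 2D\,\dist(\hat\gamma_t(\theta),\hat\gamma(\theta))$ together with the Lipschitz bound $\dist(\hat\gamma_t(\theta),\hat\gamma(\theta))\le|t|\cdot\sup_{s\in[-t,t]}|\partial_s\hat\gamma_s(\theta)|$ shows that the difference quotients are dominated by a constant multiple of $\sup_{s}|\partial_s\hat\gamma_s(\cdot)|$, which is a continuous, hence $\d\theta$-integrable function on the compact manifold $\Theta$. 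The Lebesgue dominated convergence theorem therefore yields
\begin{equation}
\left.\ddt{t}\right|_{t=0}\ell(\hat\gamma_t,\gamma) \;=\; -\int_{\theta\in\Theta}\d\theta\cdot\metric{h}(w(\theta),v(\theta))_{\hat\gamma(\theta)} \;=\; -\langle\omega_{\gamma,\hat\gamma},v\rangle,
\end{equation}
matching \eqref{eq:omega-id} exactly.

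Finally, since this holds for every tangent vector $v$, the partial derivative $\partial\ell/\partial\hat\gamma$ exists in $T^*_{\hat\gamma}C^{\infty}(\Theta,\Lambda)$ and equals $-\omega_{\gamma,\hat\gamma}$, which is the stated formula. The main technical obstacle is the dominated-convergence step: one must verify that a smooth perturbation $\hat\gamma_t$ of $\hat\gamma$ keeps $\hat\gamma_t(\theta)$ outside the cut locus of $\gamma(\theta)$ for all small $t$ at almost every $\theta$, which follows because the set of $\theta$ whose trajectory crosses the cut locus at some $t\neq 0$ can only enlarge $\cutlocus$ by a set of measure zero (as the cut locus of each point of $\Lambda$ is nowhere dense and $\hat\gamma_t$ depends smoothly on $t$); the Lipschitz bound on $\dist^2$ coming from compactness of $\Lambda$ then controls the difference quotient uniformly.
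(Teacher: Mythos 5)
Your proposal is correct and follows essentially the same route as the paper: the pointwise first-variation identity (Proposition \ref{pr:d-dist}) applied off the measure-zero set $\cutlocus_{\gamma,\hat\gamma}$, followed by integration over $\Theta$, with your dominated-convergence estimate merely making explicit the interchange of limit and integral that the paper leaves implicit. Your closing worry about $\hat\gamma_t(\theta)$ meeting the cut locus for $t\neq 0$ is unnecessary (and its resolution there is only heuristic): pointwise differentiability at $t=0$ needs only that the complement of the cut locus of $\gamma(\theta)$ is open and contains $\hat\gamma(\theta)$, so the a.e.\ convergence plus your uniform Lipschitz domination already suffice.
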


\begin{proof}
Let $\hat \gamma_t$ be a curve of smooth maps such that $\hat
\gamma_{t=0} = \hat \gamma$ and $v = \ds \left.\ddt{t}\right|_{t=0}
\hat \gamma_t$. Let $\theta \in \Theta - \cutlocus$ be fixed, and let
$c_t(s)$ be the minimal geodesic from $\gamma(\theta)$ to $\hat
\gamma_t(\theta)$. It is clear from figure \ref{fig:jacobi} that the
derivative of $\frac{1}{2}\,\dist(\hat
\gamma_t(\theta),\gamma(\theta))^2$ is $\left\langle c'(1) , v(\theta)
\right\rangle$ where $c=c_0$. From the above discussion, it is clear
that $c'(1) = -w(\theta)$.

If $\cutlocus_{\gamma,\hat \gamma}$ has zero measure, then the
discussion above shows that $\dfrac{\partial \ell}{\partial \hat
\gamma}$ exists at $(\gamma,\hat \gamma)$ and equals
$-\omega_{\gamma,\hat \gamma}$.
\end{proof}

Let $\gcutlocus \subset \Gamma$ be the set of maps $\hat \gamma$ such
that $\displaystyle \int_{\gamma \in \Gamma}
\int_{\cutlocus_{\gamma,\hat \gamma}} \d \theta\,\d \gamma = 0$. By
proposition \ref{pr:id}, if $\hat \gamma \in \gcutlocus$, then
$\dfrac{\partial \ell}{\partial \hat \gamma}$ exists for almost all
$(\gamma,\hat \gamma) \in \Gamma \times \{\hat \gamma\}$. The
following theorem is a consequence of Proposition \ref{pr:id} and
Fubini's theorem.

\begin{theorem} \label{thm:intrinsic}
\begin{enumerate}
\item If $\hat \gamma \in \gcutlocus$ and $\hat{\gamma}=g(\y)$ is a
  bayesian estimator as in Proposition \ref{pr:bay-foc}, then $\hat
  \gamma$ satisfies
\begin{align} \label{al:intrinsic-weak}
\int_{\gamma \in \Gamma} \d \gamma \, \lambda(\gamma|\y) \, \left\langle\log_{\hat
  \gamma(\theta)}(\gamma(\theta)), v(\theta) \right\rangle &= 0 &&
\forall v \in T_{\hat{\gamma}}{\Gamma}.
\end{align}

\item In particular, if $\hat \gamma \in \gcutlocus$ satisfies the equation
\begin{align} \label{al:intrinsic-strong}
\int_{\gamma \in \Gamma} \d \gamma \, \lambda(\gamma|\y) \, \log_{\hat
  \gamma(\theta)}(\gamma(\theta)) &= 0 && \bmod \nb[\hat{\gamma}]{\Gamma}_{\theta}
\end{align}
for a.a. $\theta \in \Theta$, then $\hat \gamma$ is a candidate for a
bayesian estimator as in Proposition \ref{pr:bay-foc}.

\end{enumerate}
\end{theorem}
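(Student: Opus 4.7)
The plan is to chain together Proposition \ref{pr:bay-foc}, Proposition \ref{pr:id} and Fubini's theorem. The idea is that \eqref{al:bay-foc-ddr} writes $\partial \brisk{\hat\gamma|\y}/\partial \hat\gamma$ as a $\Gamma$-average of $\partial\ell/\partial\hat\gamma$, while Proposition \ref{pr:id} identifies this derivative, for $\lambda(\cdot|\y)$-almost every $\gamma$, with $-\omega_{\gamma,\hat\gamma}$. The definition of $\gcutlocus$ is exactly what is needed to make the set of $\gamma$ on which $\partial\ell/\partial\hat\gamma$ fails to exist negligible, so the substitution is legal. Expanding $\omega_{\gamma,\hat\gamma}$ against a tangent vector $v\in T_{\hat\gamma}\Gamma$ via \eqref{eq:omega-id} produces the $\log$-integrals that appear in \eqref{al:intrinsic-weak}--\eqref{al:intrinsic-strong}.

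Concretely, first I would combine the three identities to obtain, for every $v\in T_{\hat\gamma} C^\infty(\Theta,\Lambda)$,
\begin{equation*}
\left\langle \frac{\partial \brisk{\hat\gamma|\y}}{\partial \hat\gamma},v\right\rangle
 = -\int_{\gamma\in\Gamma}\d\gamma\,\lambda(\gamma|\y)\int_{\theta\in\Theta}\d\theta\,
 \metric{h}\bigl(\log_{\hat\gamma(\theta)}\gamma(\theta),v(\theta)\bigr)_{\hat\gamma(\theta)}.
\end{equation*}
For part (1), the necessary condition \eqref{al:bay-foc} of Proposition \ref{pr:bay-foc} asserts that $\partial \brisk{\hat\gamma|\y}/\partial \hat\gamma \in \nb[\hat\gamma]{\Gamma}$, hence annihilates every $v\in T_{\hat\gamma}\Gamma$; applying Fubini's theorem to swap the $\gamma$- and $\theta$-integrals then yields \eqref{al:intrinsic-weak}. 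For part (2), the pointwise identity \eqref{al:intrinsic-strong} (valid for a.e.\ $\theta$) paired with any $v(\theta)\in T_{\hat\gamma(\theta)}\Lambda$ arising from $v\in T_{\hat\gamma}\Gamma$ integrates to the weak form \eqref{al:intrinsic-weak}; running the displayed identity in reverse shows that $\partial \brisk{\hat\gamma|\y}/\partial \hat\gamma$ annihilates $T_{\hat\gamma}\Gamma$, which is exactly the first-order condition \eqref{al:bay-foc} identifying $\hat\gamma$ as a Bayesian candidate.

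The main technical obstacle is justifying the Fubini interchange and the differentiation under the $\gamma$-integral that underlies \eqref{al:bay-foc-ddr}. Absolute integrability over $\Gamma\times\Theta$ is not hard: compactness of $\Lambda$ gives $|\log_{\hat\gamma(\theta)}\gamma(\theta)|\leq\mathrm{diam}(\Lambda)$ wherever it is defined, $\lambda(\cdot|\y)$ is a probability density on compact $\Gamma$, and $v$ is smooth with bounded norm. The more delicate point is that the integrand $\partial\ell/\partial\hat\gamma$ is only defined off the exceptional set $E=\{(\gamma,\theta):\theta\in\cutlocus_{\gamma,\hat\gamma}\}$; the hypothesis $\hat\gamma\in\gcutlocus$ states precisely that $E$ has product measure zero in $\Gamma\times\Theta$, and it is this fact, via Fubini/Tonelli, that both legitimises the substitution $\partial\ell/\partial\hat\gamma=-\omega_{\gamma,\hat\gamma}$ inside the $\Gamma$-integral and allows exchange of the orders of integration.
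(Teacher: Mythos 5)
Your proposal is correct and takes essentially the same route as the paper, whose entire proof is the one-line remark that the theorem "is a consequence of Proposition \ref{pr:id} and Fubini's theorem": you simply spell out that chain by substituting $\partial\ell/\partial\hat\gamma=-\omega_{\gamma,\hat\gamma}$ from Proposition \ref{pr:id} into \eqref{al:bay-foc-ddr}, using the hypothesis $\hat\gamma\in\gcutlocus$ to discard the cut-locus set, and invoking Fubini together with the first-order condition \eqref{al:bay-foc} of Proposition \ref{pr:bay-foc} for both directions. The explicit integrability and differentiation-under-the-integral justifications you supply (boundedness of $\log$ by the diameter of the compact $\Lambda$, product-measure-zero exceptional set) are more detail than the paper records, but they support the same argument.
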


\begin{example} \label{ex:bayes-s2-intrinsic}
{

\def\x{{\bf x}}
\def\e{{\bf e}}
\rm

Let us examine an application of both parts of Theorem
\ref{thm:intrinsic}. Let $\Theta=\Lambda=S^2 \subset \E^3$ be the unit
sphere and let $\Gamma=\SO{3}$ be the group of orientation-preserving
isometries of $S^2$ with normalised Haar measure $\d \gamma$. 

\begin{enumerate}
\item Since $\Gamma$ is a transitive group of isometries, the
  logarithm function is $\Gamma$-equivariant, so part (1) of
  \ref{thm:intrinsic} implies that
  \begin{equation} \label{eq:bayes-s2-intrinsic-vtheta}
    w_{\hat \gamma}(\theta|\y) = \int_{\gamma \in \Gamma} \d \gamma\,
    \lambda(\hat \gamma \gamma |\y)\, \log_{\theta}(\gamma \theta)
  \end{equation}
  must integrate to zero on $S^2$ against any vector field of the form
  $v(\theta) = \xi \cdot \theta$, $\xi \in \so{3}$. One uses the fact
  that $\log_{\theta}(\gamma \theta) = \frac{\alpha}{\sin \alpha}
  \times (\gamma \theta - \left\langle \gamma \theta , \theta
  \right\rangle \cdot \theta)$ (\cf \ref{al:reg-id-s2-log}) to compute
  that
\begin{align}
\left\langle w_{\hat \gamma} , v \right\rangle &= \int_{\theta \in
  S^2} \int_{\gamma \in \SO{3}}\ \d \theta\, \d \gamma\,
\lambda(\hat{\gamma} \gamma|\y)\, \frac{\alpha}{\sin \alpha}\, \left\langle \gamma \theta , \xi
\theta \right\rangle, && \cos \alpha = \left\langle \gamma \theta ,
\theta \right\rangle\\
&= -\frac{1}{3} \times \trace{\tau(\hat{\gamma}) \cdot
  \xi}, \label{al:bayes-s2-intrinsic-vanish}
\intertext{where}
\tau(\hat{\gamma}) &= 3 \int_{\theta \in
  S^2} \int_{\gamma \in \SO{3}}\ \d \theta\, \d \gamma\,
\lambda(\hat{\gamma} \gamma |\y)\, \frac{\alpha}{\sin \alpha}\,
\gamma  \theta \theta'  && (\theta'=\textrm{transpose of }\theta)\label{al:bayes-s2-intrinsic-vanish-tau}
\end{align} is defined analogous to
\eqref{al:bay-est-on}. Since $\int_{\theta \in S^2} \d \theta\ \theta
\otimes \theta' = \frac{1}{3}I$, if the weight $\alpha/\sin \alpha$
were identically $1$, then $\bar{\gamma} = \hat{\gamma}
\tau(\hat{\gamma})$ would coincide with that defined in
\eqref{al:bay-est-on}. It follows that if $\hat{\gamma}$ equals the
bayesian estimator $g(\y)$, then $\tau(\hat{\gamma})$ must be
symmetric. In other words, $\hat{\gamma}$ is the orthogonal projection
of $\bar{\gamma}(\hat{\gamma}) = \hat{\gamma} \cdot
\tau(\hat{\gamma})$ onto $\Gamma$, similar to
\eqref{al:reg-id-s2-foc}.

\medskip

\item On the other hand, let us investigate condition (2) of Theorem
  \ref{thm:intrinsic}. Let $\x : (S^2)^k \to \SO{3}$ be an equivariant
  map and let the joint conditional density of $\y$ be
  $f(\y|\gamma) = 1 + c \trace{\gamma' \cdot \x(\y) }$. Assume that
  the mean of $\gamma'$ with respect to the bayesian prior
  $\lambda(\gamma)$ is zero. The posterior density
  $\lambda(\gamma|\y)$ is therefore equal to $f(\y|\gamma)$.

  To fix ideas, one may take $\x(\y) = \pi(\sum_{l=1}^k y_l \otimes
  \theta_l')$, where $\pi : \gl{3} \to \SO{3}$ is the orthogonal
  projection, and $\lambda(\gamma)=1$ for all $\gamma$.

  Let $\e \in S^2$ be a given point. Since $\Gamma$ acts transitively,
  one can write $\theta = \alpha \cdot \e$ for some $\alpha \in
  \Gamma$. One therefore finds that the vanishing of $w_{\hat
    \gamma}(\theta|\y)$ is equivalent to the vanishing of
  \begin{equation} \label{eq:vvtheta}
    \int_{\alpha \in \Gamma} \d \alpha \ \left| \int_{\gamma \in \Gamma} \d
    \gamma \, \lambda( \hat \gamma \alpha \gamma \alpha^{-1}|\y) \,
    \log_{\e} (\gamma\cdot\e) \right|^2.
  \end{equation}
  If one introduces Euler angles on $\SO{3}$ relative to an orthonormal
  frame $\{\e_1,\e_2,\e_3=\e\}$, then one can write $\gamma_j =
  R_3(a_j)R_1(b_j)R_3(c_j)$ where $R_i(s)$ is a rotation in the plane
  orthogonal to $\e_i$ counterclockwise by angle $s$. The vanishing of
  \eqref{eq:vvtheta} is equivalent to the vanishing of the multi-integral
  \begin{align}
    \int_{[0,2\pi]^4 \times [0,\pi]^2}\ & \frac{1}{64 \pi^4} \, \, \d a_1 \, \d a_2 \, \d c_1 \, \d c_2 \, \d
    b_1 \, \d b_2  \label{al:vtheta-is-0}\\
    & \times \sin(b_1)\sin(b_2) \, b_1 b_2 \cos(a_1-a_2) \,
    \lambda( \hat \gamma \alpha \gamma_1 \alpha^{-1}|\y) \,
    \lambda( \hat \gamma \alpha \gamma_2 \alpha^{-1}|\y)
  \end{align}
  for every $\alpha \in \SO{3}$.

  Let the special orthogonal matrix $\alpha^{-1} \hat \gamma' \x \alpha$
  be factorised as $R_3(x)R_1(y)R_3(z)$ in terms of Euler
  angles. \maxima computes the integral \eqref{al:vtheta-is-0} to be
  $\pi^4 c^2 \sin(y)^2/256$ \cite{maxima}. Therefore, the integral
  vanishes for all $\alpha$ iff $\hat \gamma' \x = I$, \ie ${\hat
    \gamma} = \x$.

\begin{maximacode}
Verification of \ref{al:vtheta-is-0}
scalarmatrixp : true;
logv(x,y) := y - (x.y)*x;
gama[i] := euler_matrix(a[i],b[i],c[i]);
thetal[i] := cast_mlf(gama[i].[0,0,1]);
q : expand((A.thetal) . (A.thetal));
I : mintegrate(q * sin(b),[[a,0,2*
if is(equal(I-tr_AtA/3,0)) then print("true") else print("false");
q : expand(thetal.(A.thetal));
q2 : expand(q^2);
J : expand((mat_trace(A)^2 + mat_trace(A^^2) + mat_trace(transpose(A).A))/15);
K : mintegrate(q2 * sin(b),[[a,0,2*
if is(equal(J-K,0)) then print("true") else print("false");

kill(gama,t,x,y,z,a,b,c,I,J);
bayes_lposterior(gama,x) := block([c], 1+c*mat_trace(transpose(gama).x));
bayes_eposterior(gama,x) := block([c], exp(c*mat_trace(transpose(gama).x)));
x0 : euler_matrix('x,'y,'z);
gama[i] := euler_matrix(-a[i],-b[i],-c[i]);
t[i] := bayes_lposterior(gama[i],x0);
I : sin(b[1])*sin(b[2])*b[1]*b[2]*cos(a[1]-a[2])*t[1]*t[2];
J : expand(mintegrate(I,[[a[1],0,2*
J : trigsimp(J);
if is(equal(J,
\end{maximacode}

\end{enumerate}

}
\end{example}

\end{document}